\newcommand{\R}{\mathbb{R}}
\newcommand{\ep}{\varepsilon}
\newcommand{\imply}{\mspace{10mu} \Longrightarrow \mspace{10mu}}
\DeclareMathOperator{\AC}{AC}
\DeclareMathOperator{\dom}{dom}
\DeclareMathOperator{\lip}{lip}
\DeclareMathOperator{\Map}{Map}
\theoremstyle{plain}
\newtheorem{theorem}{Theorem}[section]
\newtheorem{lemma}[theorem]{Lemma}
\newtheorem{proposition}[theorem]{Proposition}
\newtheorem{corollary}[theorem]{Corollary}
\theoremstyle{definition}
\newtheorem{definition}[theorem]{Definition}
\newtheorem{notation}{Notation}
\theoremstyle{remark}
\newtheorem{remark}[theorem]{Remark}
\begin{document}

\title{$C^1$-smooth dependence on initial conditions and delay: \\
spaces of initial histories of Sobolev type, \\ and differentiability of translation in $L^p$}
\author{Junya Nishiguchi\thanks{Mathematical Science Group, Advanced Institute for Materials Research (AIMR), Tohoku University,
2-1-1 Katahira, Aoba-ku, Sendai, 980-8577, Japan}
\footnote{E-mail: \url{junya.nishiguchi.b1@tohoku.ac.jp}}}
\date{}

\maketitle

\begin{abstract}
The objective of this paper is to clarify the relationship between the $C^1$-smooth dependence of solutions
to delay differential equations (DDEs) on initial histories (i.e., initial conditions) and delay parameters.
For this purpose, we consider a class of DDEs which include a constant discrete delay.
The problem of $C^1$-smooth dependence is fundamental from the viewpoint of the theory of differential equations.
However, the above mentioned relationship is not obvious
because the corresponding functional differential equations have the less regularity with respect to the delay parameter.
In this paper, we prove that the $C^1$-smooth dependence on initial histories and delay holds
by adopting spaces of initial histories of Sobolev type,
where the differentiability of translation in $L^p$ plays an important role.

\begin{flushleft}
\textbf{2010 Mathematics Subject Classification}.
Primary: 34K05, Secondary: 46E35.
\end{flushleft}

\begin{flushleft}
\textbf{Keywords}.
Delay differential equations;
Constant discrete delay;
Smooth dependence on delay;
History spaces of Sobolev type;
Differentiability of translation in $L^p$
\end{flushleft}

\end{abstract}

\tableofcontents

\section{Introduction}

Differential equations with constant discrete delays are used
for mathematical models of various dynamic phenomena
(e.g., see \cite[Section 21]{Driver 1977}, \cite[Chapter 2]{Kolmanovskii--Myshkis 1999}, and \cite{Erneux 2009}).
In many cases, the precise values of delays are unknown.
Therefore, it is important to study how the solutions behave as functions of delay parameters
in order to investigate the validity of such mathematical models.
This is known as the delay parameter identification problem
(e.g., see \cite{Hartung 2001} and \cite{Banks--Robbins--Sutton 2013}),
where it is necessary to differentiate solutions to delay differential equations (DDEs) with respect to delay parameters.
Indeed, the above mentioned differentiability problem is fundamental
from the viewpoint of the theory of differential equations.
However, the smoothness of the corresponding retarded functional differential equations (RFDEs)
is closely related to the regularity of initial histories.
Therefore, it is not obvious which spaces of initial histories (called \textit{history spaces} in this paper) should be chosen
in order to obtain such differentiability or, in other words, the $C^1$-smooth dependence on delay.

The objective of this paper is to clarify the connection
between the $C^1$-smooth dependence on initial histories and delay and the regularity of initial histories.
For this purpose, we consider a DDE
	\begin{equation}\label{eq:single const delay II}
		\dot{x}(t) = f(x(t), x(t - r))
	\end{equation}
and its initial value problem (IVP)
	\begin{equation}\label{eq:IVP, single const delay II}
		\left\{
		\begin{alignedat}{2}
			\dot{x}(t) &= f(x(t), x(t - r)), & \mspace{20mu} & t \ge 0, \\
			x(t) &= \phi(t), & & t \in [-R, 0]
		\end{alignedat}
		\right.
	\end{equation}
for each $(\phi, r) \in C([-R, 0], \mathbb{R}^N) \times [0, R]$.
Here $R > 0$ is the maximal delay which is constant, $r \in [0, R]$ is the delay parameter,
$N \ge 1$ is an integer, and $f \colon \mathbb{R}^N \times \mathbb{R}^N \to \mathbb{R}^N$ is a function.
$C([-R, 0], \mathbb{R}^N)$ denotes the Banach space of continuous functions
from $[-R, 0]$ to $\mathbb{R}^N$ with the supremum norm
	\begin{equation*}
		\|\phi\|_{C[-R, 0]} := \sup_{\theta \in [-R, 0]} |\phi(\theta)|,
	\end{equation*}
where $|\cdot|$ is a norm on $\mathbb{R}^N$.
Under the local Lipschitz continuity of $f$, \eqref{eq:IVP, single const delay II} has the unique maximal solution
	\begin{equation*}
		x(\cdot; \phi, r) \colon [-R, T_{\phi, r}) \to \mathbb{R}^N
	\end{equation*}
for $0 < T_{\phi, r} \le \infty$.
We refer the reader to \cite{Hale--Lunel 1993} as a general reference of the theory of RFDEs.
Then the problem of the $C^1$-smooth dependence on initial histories and delay which will be studied in this paper
is the continuous differentiability of
	\begin{equation*}
		(\phi, r) \mapsto x(\cdot; \phi, r)
	\end{equation*}
in an appropriate sense.

The difficulty about the $C^1$-smooth dependence on delay
is the less smoothness of the corresponding functional $F$ (called \textit{history functional} in this paper) given by
	\begin{equation}\label{eq:history functional}
		F(\phi, r) := f(\phi(0), \phi(-r))
	\end{equation}
with respect to the delay parameter $r$.
In fact, the function $r \mapsto F(\phi, r)$ is not differentiable for general $\phi \in C([-R, 0], \mathbb{R}^N)$
even if the function $f \colon \mathbb{R}^N \times \mathbb{R}^N \to \mathbb{R}^N$ is smooth.
This phenomenon is similar to the lack of smoothness for history functionals corresponding to state-dependent DDEs
(see \cite{Walther 2003c}).
We refer the reader to \cite{Hartung--Krisztin--Walther--Wu 2006}
as a reference of the theory of state-dependent DDEs.

It is natural to consider initial histories with better regularity
in order to obtain the smooth dependence on initial histories and delay. 
The method of consideration in \cite{Walther 2003c}
is to adopt the Banach space $C^1([-R, 0], \mathbb{R}^N)$ of continuously differentiable functions from $[-R, 0]$ to $\mathbb{R}^N$
with the $C^1$-norm
	\begin{equation*}
		\|\phi\|_{C^1[-R, 0]} := \|\phi\|_{C[-R, 0]} + \|\phi'\|_{C[-R, 0]}
	\end{equation*}
as a history space.
Then the compatibility condition given by
	\begin{equation*}
		\phi'(0) = f(\phi(0), \phi(-r))
	\end{equation*}
for every initial history $\phi$ is necessary to keep the histories of solution of class $C^1$,
and therefore, the solution manifold defined by
	\begin{equation*}
		X_{f, r} = \{\mspace{2mu} \phi \in C^1([-R, 0], \mathbb{R}^N) : \phi'(0) = f(\phi(0), \phi(-r)) \mspace{2mu}\}
	\end{equation*}
arises as the set of initial histories.
However, the framework of the solution manifold is not suitable for the $C^1$-smooth dependence on delay
because $X_{f, r}$ depends on $r$.

The first study of the $C^1$-smooth dependence on initial histories and delay seems to be done
by Hale \& Ladeira~\cite{Hale--Ladeira 1991}.
Their idea is to use the history space $C^{0, 1}([-R, 0], \mathbb{R}^N)$ endowed with the $\mathcal{W}^{1, 1}$-norm.
Here $C^{0, 1}([-R, 0], \mathbb{R}^N)$ denotes the set of Lipschitz continuous functions from $[-R, 0]$ to $\mathbb{R}^N$,
and $\mathcal{W}^{1, p}$-norm for $1 \le p < \infty$ is defined as follows for absolutely continuous functions:
	\begin{equation*}
		\|\phi\|_{\mathcal{W}^{1, p}[-R, 0]}
		:= \biggl( |\phi(-R)|^p + \int_{-R}^0 |\phi'(\theta)|^p \mspace{2mu} \mathrm{d}\theta \biggr)^{\frac{1}{p}}
	\end{equation*}
with the almost everywhere derivative $\phi'$ of $\phi$.
The contribution in \cite{Hale--Ladeira 1991} is the adoption of the Lipschitz continuous regularity
for the $C^1$-smooth dependence on delay.
In this case, the $C^1$-smooth dependence on delay is not trivial
because the history functional given in \eqref{eq:history functional} is not differentiable with respect to $r$
for general $\phi \in C^{0, 1}([-R, 0], \mathbb{R}^N)$.
It should be noticed that the differentiability of $r \mapsto x(\cdot; \phi, r)$ at $r = 0$ is not discussed
in \cite{Hale--Ladeira 1991}.
The continuous differentiability of
	\begin{equation*}
		r \mapsto x(t; \phi, r) \in \mathbb{R}^N
	\end{equation*}
for the time-dependent delay function $r = r(\cdot)$ is studied by Hartung~\cite{Hartung 2016b}
by assuming $\phi \in C^{0, 1}([-R, 0], \mathbb{R}^N)$, where the positivity $r(t) > 0$ is also assumed.

The method of the proof of the $C^1$-smooth dependence on initial histories and delay given in \cite{Hale--Ladeira 1991}
is the fixed point argument, which is standard in the literature (ref.\ \cite{Hale--Lunel 1993}).
That is, IVP~\eqref{eq:IVP, single const delay II} is converted to the fixed point problem
through the integral equation.
Then the $C^1$-smooth dependence on initial histories and delay is obtained
from the $C^1$-uniform contraction theorem (e.g., see \cite[Theorem 2.2 in Chapter 2]{Chow--Hale 1982}),
where history and delay are parameters.
However, the history space
	\begin{equation*}
		\left( C^{0, 1}([-R, 0], \mathbb{R}^N), \|\cdot\|_{\mathcal{W}^{1, 1}[-R, 0]} \right)
	\end{equation*}
is not a Banach space but a \textit{quasi-Banach space} in their terminology.
Therefore, the usual $C^1$-uniform contraction theorem cannot be applicable,
and it is necessary to invent the $C^1$-uniform contraction theorem for such quasi-Banach spaces
(\cite[Theorem 2.7]{Hale--Ladeira 1991}).
It should be noticed that the Banach space $C^{0, 1}([-R, 0], \mathbb{R}^N)$ endowed with the $C^{0, 1}$-norm
	\begin{equation*}
		\|\phi\|_{C^{0, 1}[-R, 0]} := \max\{\|\phi\|_{C[-R, 0]}, \lip(\phi)\},
	\end{equation*}
where $\lip(\phi)$ is the Lipschitz constant of $\phi$, is not suitable for a history space
(see \cite{Louihi--Hbid--Arino 2002}).

Hale \& Ladeira~\cite{Hale--Ladeira 1991} gives an insight into the $C^1$-smooth dependence problem
as mentioned above.
However, the following questions which are related each other should arise:
\begin{itemize}
\item What is the essentiality of the Lipschitz continuous regularity
for the $C^1$-smooth dependence on initial histories and delay?
\item What happens if $\mathcal{W}^{1, p}([-R, 0], \mathbb{R}^N)$ ($1 \le p < \infty$) is chosen as a history space?
\end{itemize}
Here $\mathcal{W}^{1, p}([-R, 0], \mathbb{R}^N)$, which will be called a \textit{history space of Sobolev type} in this paper,
is the linear space of absolutely continuous functions
from $[-R, 0]$ to $\mathbb{R}^N$ whose almost everywhere derivatives belong to $L^p([-R, 0], \mathbb{R}^N)$
endowed with the $\mathcal{W}^{1, p}$-norm.
When $p = 2$ and the norm $|\cdot|$ on $\mathbb{R}^N$ is the Euclidean norm,
$\mathcal{W}^{1, 2}([-R, 0], \mathbb{R}^N)$ becomes a Hilbert space.
This is an advantageous fact for numerical analysis.

In this paper, we show that $\mathcal{W}^{1, p}([-R, 0], \mathbb{R}^N)$ can be chosen as a history space
for the $C^1$-smooth dependence on initial histories and delay.
It becomes clear that the \textit{differentiability of translation in $L^p$} plays an important role in the proof.
The method of the proof is standard but does not require the Lipschitz continuity of initial histories,
which in fact make the proof simple.
This is the reason why $\mathcal{W}^{1, p}([-R, 0], \mathbb{R}^N)$ is appropriate and gives answer to the above questions.
We also prove that the solution semiflow with a delay parameter,
which is the solution semiflow generated by the IVPs of the extended system
	\begin{equation}\label{eq:single const delay II, extended}
		\left\{
		\begin{alignedat}{2}
			\dot{x}(t) &= f(x(t), x(t - r(t))), \\
			\dot{r}(t) &= 0,
		\end{alignedat}
		\right.
	\end{equation}
is a $C^1$-maximal semiflow.
We note that the extended system~\eqref{eq:single const delay II, extended} is a special case
of the following coupled system of DDE and ODE
(see \cite{Arino--Hadeler--Hbid 1998} and \cite{Chen--Hu--Wu 2010})
	\begin{equation*}
		\left\{
		\begin{alignedat}{2}
			\dot{x}(t) &= f(x(t), x(t - r(t))), \\
			\dot{r}(t) &= g(x(t), r(t)),
		\end{alignedat}
		\right.
	\end{equation*}
where $g \colon \mathbb{R}^N \times \mathbb{R} \to \mathbb{R}$ is a function.
The extended system~\eqref{eq:single const delay II, extended} also appears in bifurcation problems
(ref.\ \cite{Kuznetsov 2004}).

Finally, we give another several comments about previous studies.
(i) In \cite{Hale--Ladeira 1991}, the function $f$ is required to be of class $C^2$
for the $C^1$-smooth dependence on initial histories and delay.
The results which will be given in this paper require that $f$ is of class $C^1$ for such $C^1$-smooth dependence,
which is same as \cite{Hartung 2016b}.
(ii) It is mentioned in \cite[Section 4]{Hale--Ladeira 1991} that similar results hold with the same proofs
when the delay is time-dependent.
However, this is incorrect because a simple counter example can be given as follows:
We consider the function $f(x, y) = y$.
Let $0 < T < R$.
For each $c \in [0, R - T]$, we define $r_c \in C(\mathbb{R}, [0, R])$ by
	\begin{equation*}
		r_c(t) =
		\begin{cases}
			c & (t \le 0), \\
			t + c & (0 \le t \le T), \\
			T + c & (t \ge T).
		\end{cases}
	\end{equation*}
Then it can be shown that $[0, R - T] \ni c \mapsto r_c \in C(\mathbb{R}, [0, R])$ is differentiable but the solution
	\begin{equation*}
		x(t; \phi, r_c)
		= \phi(0) + \int_0^t \phi(s - r_c(s)) \mspace{2mu} \mathrm{d}s
		= \phi(0) + t\phi(-c)
		\mspace{20mu} (\forall t \in [0, T])
	\end{equation*}
is not differentiable with respect to $c$ for general $\phi \in C^{0, 1}([-R, 0], \mathbb{R}^N)$.
This example can be considered to be a critical case
in the sense that the \textit{delayed argument function}
	\begin{equation*}
		t \mapsto t - r_c(t)
	\end{equation*}
is constant.
In \cite{Hartung 2016b}, the $C^1$-smooth dependence on time-dependent delay
with the Lipschitz continuous regularity of initial histories is studied
under some strict monotonicity condition of the delayed argument function.
See also \cite{Hartung--Turi 1997} for state-dependent DDEs.
(iii) In \cite{Banks--Robbins--Sutton 2013}, the authors study the $C^1$-smoothness of the function
	\begin{equation*}
		(0, R] \ni r \mapsto x(t; \phi, r) \in \mathbb{R}^N
	\end{equation*}
without citing the previous studies.
It seems that the argument relies on the differentiability of translation in $L^2$
assuming initial histories belong to $H^{1, \infty}$,
however, the proof of the differentiability and the definition of $H^{1, \infty}$ are not given.
The assumption given in \cite{Banks--Robbins--Sutton 2013} is also more stronger,
namely, the boundedness of the norm of the Fr\'{e}chet derivative of $f$ is assumed.

This paper is organized as follows.
In Section~\ref{sec:history spaces of Sobolev type},
we define history spaces of Sobolev type and investigate their fundamental properties.
Section~\ref{sec:main results} are divided into two parts:
a simple case (Subsection~\ref{subsec:a special case}) and a general case (Subsection~\ref{subsec:a general case}).
In Subsection~\ref{subsec:a special case}, we concentrate our consideration on a DDE
	\begin{equation}\label{eq:single const delay I}
		\dot{x}(t) = f(x(t - r))
	\end{equation}
and its IVP
	\begin{equation}\label{eq:IVP, single const delay I}
		\left\{
		\begin{alignedat}{2}
			\dot{x}(t) &= f(x(t - r)), & \mspace{20mu} & t \ge 0, \\
			x(t) &= \phi(t), & & t \in [-R, 0]
		\end{alignedat}
		\right.
	\end{equation}
for each $(\phi, r) \in C([-R, 0], \mathbb{R}^N) \times (0, R]$.
Then the problem of the $C^1$-smooth dependence on delay is very simplified,
and the result directly follows by the continuity and differentiability of translation in $L^p$.
In Subsection~\ref{subsec:a general case},
we consider a general class of DDEs of the form given in \eqref{eq:single const delay II}.
Here we prove the main results of this paper,
which consist of the $C^1$-smooth dependence on initial histories and delay
(Theorem~\ref{thm:C^1-smooth dependence on history and delay, single const delay})
and the $C^1$-smoothness of the solution semiflow with a delay parameter
(Theorem~\ref{thm:C^1-smoothness of sol semiflow with delay, single const delay}).
As mentioned above, the differentiability of translation in $L^p$ plays an important role in the proof.

We have three appendices.
In Appendix~\ref{sec:differentiability of translation in L^p},
we give a proof of this differentiability result (Corollary~\ref{cor:differentiability of translation in L^p})
together with the discussion about the estimate of the double integral for the translation of $L^p$-functions
(Corollary~\ref{cor:double integral, small order, L^p}).
The latter is also used in the proof of the $C^1$-smooth dependence result.
In Appendix~\ref{sec:C^1-uniform contraction theorem},
the $C^1$-uniform contraction theorem (Theorem~\ref{thm:C^1-uniform contraction thm})
is stated with the assumption that the parameter set is not necessarily open.
We give the proof and some fundamental properties about Fr\'{e}chet differentiability
to keep this paper self-contained.
In Appendix ~\ref{sec:maximal semiflows},
we give definitions about maximal semiflows and prove the theorem (Theorem~\ref{thm:C^1-maximal semiflow})
which ensures that a maximal semiflow is of class $C^1$.

\section{Preliminary: History spaces of Sobolev type}\label{sec:history spaces of Sobolev type}

Let $R > 0$ be a constant and $N \ge 1$ be an integer.
The linear space of all functions from $[-R, 0]$ to $\mathbb{R}^N$ is denoted by $\Map([-R, 0], \mathbb{R}^N)$.
Let $\mathbb{R}_+$ denote the set of all nonnegative real numbers.

\begin{definition}[History]
Let $R > 0$ be a given constant.
Let $a < b$ be real numbers so that $a + R < b$ and $\gamma \colon [a, b] \to \mathbb{R}^N$ be a function.
For every $t \in [a + R, b]$, the function $R_t\gamma \in \Map([-R, 0], \mathbb{R}^N)$ defined by
	\begin{equation*}
		R_t\gamma \colon [-R, 0] \ni \theta \mapsto \gamma(t + \theta) \in \mathbb{R}^N
	\end{equation*}
is called the \textit{history} of $\gamma$ at $t$.
\end{definition}

\begin{definition}[History space]
A linear subspace $H \subset \Map([-R, 0], \mathbb{R}^N)$ is called a \textit{history space}
with the past interval $[-R, 0]$
if the topology of $H$ is given so that the linear operations on $H$ are continuous.
\end{definition}

\begin{definition}[Static prolongation]
For each $\phi \in \Map([-R, 0], \mathbb{R}^N)$,
the function $\bar{\phi} \colon [-R, +\infty) \to \mathbb{R}^N$ defined by
	\begin{equation*}
		\bar{\phi}(t) =
		\begin{cases}
			\phi(t) & (t \in [-R, 0]), \\
			\phi(0) & (t \in \mathbb{R}_+)
		\end{cases}
	\end{equation*}
is called the \textit{static prolongation} of $\phi$.
\end{definition}

\begin{definition}[History space of Sobolev type]
Let $1 \le p < \infty$ and $a < b$ be real numbers.
For each absolutely continuous function $x \colon [a, b] \to \mathbb{R}^N$, let
	\begin{equation*}
		\|x\|_{\mathcal{W}^{1, p}[a, b]}
		:= \bigl( |x(a)|^p + \|x'\|_{L^p[a, b]}^p \bigr)^{\frac{1}{p}},
	\end{equation*}
where $x'$ denotes the almost everywhere derivative of $x$.
Let $\mathcal{W}^{1, p}([a, b], \mathbb{R}^N)$ denote the normed space
	\begin{equation*}
		\left\{ \mspace{2mu} x \in \AC([a, b], \mathbb{R}^N) : x' \in L^p([a, b], \mathbb{R}^N) \mspace{2mu} \right\}
	\end{equation*}
endowed with the norm $\|\cdot\|_{\mathcal{W}^{1, p}[a, b]}$.
The history space $\mathcal{W}^{1, p}([-R, 0], \mathbb{R}^N)$ is called the \textit{history space of Sobolev type}.
\end{definition}

\begin{remark}
History spaces of Sobolev type appear for the investigation of neutral delay differential equations.
See \cite{Driver 1965} for $p = 1$ and \cite{Melvin 1972b} for $1 \le p < \infty$ for examples.
\end{remark}

\begin{lemma}\label{lem:equivalent norm in W^{1, p}}
Let $1 \le p < \infty$ and $a < b$ be real numbers.
We define a norm $\|\cdot\|$ on $\mathcal{W}^{1, p}([a, b], \mathbb{R}^N)$ by
	\begin{equation*}
		\|x\| := \|x\|_{C[a, b]} + \|x'\|_{L^p[a, b]}.
	\end{equation*}
Then $\|\cdot\|$ is equivalent to $\|\cdot\|_{\mathcal{W}^{1, p}[a, b]}$.
\end{lemma}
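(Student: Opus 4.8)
The plan is to prove the equivalence by establishing the two one-sided estimates $\|x\|_{\mathcal{W}^{1, p}[a, b]} \le \|x\|$ and $\|x\| \le C \|x\|_{\mathcal{W}^{1, p}[a, b]}$ for a suitable constant $C = C(p, a, b)$ and all $x \in \mathcal{W}^{1, p}([a, b], \mathbb{R}^N)$. Both rest on two elementary tools: the fundamental theorem of calculus in the form $x(t) = x(a) + \int_a^t x'(s) \, \mathrm{d}s$, which is available precisely because $x$ is absolutely continuous, and Hölder's inequality, which lets me pass from an $L^1$-bound on $x'$ to the $L^p$-norm appearing in both norms.

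First I would dispose of the easy direction. Since $|x(a)| \le \|x\|_{C[a, b]}$ and since $s^p + t^p \le (s + t)^p$ for $s, t \ge 0$ and $p \ge 1$, one obtains
\begin{equation*}
\|x\|_{\mathcal{W}^{1, p}[a, b]}^p = |x(a)|^p + \|x'\|_{L^p[a, b]}^p \le \|x\|_{C[a, b]}^p + \|x'\|_{L^p[a, b]}^p \le \bigl( \|x\|_{C[a, b]} + \|x'\|_{L^p[a, b]} \bigr)^p = \|x\|^p,
\end{equation*}
so that $\|x\|_{\mathcal{W}^{1, p}[a, b]} \le \|x\|$ with constant $1$. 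For the reverse inequality I would first bound the supremum norm. For every $t \in [a, b]$ the fundamental theorem of calculus and Hölder's inequality give
\begin{equation*}
|x(t)| \le |x(a)| + \int_a^b |x'(s)| \, \mathrm{d}s \le |x(a)| + (b - a)^{1 - 1/p} \|x'\|_{L^p[a, b]},
\end{equation*}
where the exponent $1 - 1/p$ is understood as $0$ when $p = 1$. Taking the supremum over $t$ bounds $\|x\|_{C[a, b]}$, and then adding $\|x'\|_{L^p[a, b]}$ and using the trivial estimates $|x(a)| \le \|x\|_{\mathcal{W}^{1, p}[a, b]}$ and $\|x'\|_{L^p[a, b]} \le \|x\|_{\mathcal{W}^{1, p}[a, b]}$ yields $\|x\| \le \bigl( 2 + (b - a)^{1 - 1/p} \bigr) \|x\|_{\mathcal{W}^{1, p}[a, b]}$, which completes the proof.

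I do not expect a genuine obstacle here: the statement is essentially a packaging of the one-dimensional Sobolev embedding $\mathcal{W}^{1, p} \hookrightarrow C$. The only points demanding a little care are the correct application of Hölder's inequality (and its degenerate form at $p = 1$) to convert $\|x'\|_{L^1[a, b]}$ into $\|x'\|_{L^p[a, b]}$, and keeping track that the resulting constant $C = 2 + (b - a)^{1 - 1/p}$ depends on the length of the interval but not on $x$, so that the two norms are genuinely equivalent.
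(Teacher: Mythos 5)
Your proof is correct and follows essentially the same route as the paper: the easy direction via the elementary $\ell^p$-comparison $s^p + t^p \le (s+t)^p$, and the reverse via the fundamental theorem of calculus plus H\"{o}lder's inequality (your exponent $1 - 1/p$ is exactly the paper's $1/q$ with $q$ the H\"{o}lder conjugate). The only difference is cosmetic: in the final step you bound $|x(a)|$ and $\|x'\|_{L^p[a, b]}$ separately by $\|x\|_{\mathcal{W}^{1, p}[a, b]}$, getting the constant $2 + (b-a)^{1 - 1/p}$, whereas the paper uses $s + t \le 2^{1/q}(s^p + t^p)^{1/p}$ to get $2^{1/q}\bigl[ (b-a)^{1/q} + 1 \bigr]$.
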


\begin{proof}
Let $x \in \mathcal{W}^{1, p}([a, b], \mathbb{R}^N)$.

\textbf{Step 1.}
By the relationships between $\ell^p$-norms, we have
	\begin{equation*}
		\|x\|_{\mathcal{W}^{1, p}[a, b]}
		= \bigl( |x(a)|^p + \|x'\|_{L^p[a, b]}^p \bigr)^{\frac{1}{p}}
		\le |x(a)| + \|x'\|_{L^p[a, b]}
		\le \|x\|.
	\end{equation*}

\textbf{Step 2.}
By the fundamental theorem of calculus for absolutely continuous functions, we have
	\begin{equation*}
		|x(t)|
		\le |x(a)| + \int_a^t |x'(s)| \mspace{2mu} \mathrm{d}s
		\le |x(a)| + \|x'\|_{L^1[a, b]}
	\end{equation*}
for all $t \in [a, b]$.
This shows
	\begin{equation*}
		\|x\|_{C[a, b]}
		\le |x(a)| + (b - a)^{\frac{1}{q}}\|x'\|_{L^p[a, b]},
	\end{equation*}
where $q$ is the H\"{o}lder conjugate of $p$.
Therefore,
	\begin{align*}
		\|x\|
		&= \|x\|_{C[a, b]} + \|x'\|_{L^p[a, b]} \\
		&\le \bigl[ (b - a)^{\frac{1}{q}} + 1 \bigr] (|x(a)| + \|x'\|_{L^p[a, b]}) \\
		&\le 2^{\frac{1}{q}} \bigl[ (b - a)^{\frac{1}{q}} + 1 \bigr] \|x\|_{\mathcal{W}^{1, p}[a, b]},
	\end{align*}
where the relation between $\ell^p$-norms is used.

By the above steps, the conclusion holds.
\end{proof}

\begin{remark}
Lemma~\ref{lem:equivalent norm in W^{1, p}} means that
a sequence $(x_n)_{n = 1}^\infty$ in $\mathcal{W}^{1, p}([a, b], \mathbb{R}^N)$ converges to $x$
if and only if $x_n \to x$ uniformly and $x_n' \to x'$ in $L^p$.
\end{remark}

\begin{lemma}
Let $1 \le p < \infty$ and $a < b$ be real numbers.
Then $\mathcal{W}^{1, p}([a, b], \mathbb{R}^N)$ is a Banach space.
\end{lemma}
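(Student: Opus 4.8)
The plan is to exploit the equivalent norm from Lemma~\ref{lem:equivalent norm in W^{1, p}} to reduce completeness of $\mathcal{W}^{1, p}([a, b], \mathbb{R}^N)$ to the completeness of the two classical spaces $C([a, b], \mathbb{R}^N)$ and $L^p([a, b], \mathbb{R}^N)$, and then to verify that the resulting candidate limit actually lies in $\mathcal{W}^{1, p}([a, b], \mathbb{R}^N)$ by means of the fundamental theorem of calculus for absolutely continuous functions.

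First I would take a Cauchy sequence $(x_n)$ in $\mathcal{W}^{1, p}([a, b], \mathbb{R}^N)$. By Lemma~\ref{lem:equivalent norm in W^{1, p}} it is equivalent to measure Cauchyness in the norm $\|x\| = \|x\|_{C[a, b]} + \|x'\|_{L^p[a, b]}$, so $(x_n)$ being Cauchy means precisely that $(x_n)$ is Cauchy in the supremum norm and $(x_n')$ is Cauchy in $L^p$. Completeness of $C([a, b], \mathbb{R}^N)$ and of $L^p([a, b], \mathbb{R}^N)$ then furnishes a continuous function $x$ with $x_n \to x$ uniformly, and a function $y \in L^p([a, b], \mathbb{R}^N)$ with $x_n' \to y$ in $L^p$.

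The heart of the argument is to identify this pair, i.e. to show that $x \in \mathcal{W}^{1, p}([a, b], \mathbb{R}^N)$ with $x' = y$. Each $x_n$ is absolutely continuous, so $x_n(t) = x_n(a) + \int_a^t x_n'(s) \, \mathrm{d}s$ for all $t$. Since $[a, b]$ has finite measure, H\"{o}lder's inequality gives $\|x_n' - y\|_{L^1[a, b]} \le (b - a)^{\frac{1}{q}} \|x_n' - y\|_{L^p[a, b]} \to 0$, where $q$ is the H\"{o}lder conjugate of $p$; hence $\int_a^t x_n'(s) \, \mathrm{d}s \to \int_a^t y(s) \, \mathrm{d}s$ uniformly in $t$. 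Letting $n \to \infty$ in the integral representation yields $x(t) = x(a) + \int_a^t y(s) \, \mathrm{d}s$, which shows that $x$ is absolutely continuous (being an indefinite integral of an $L^1$ function) with $x' = y$ almost everywhere. In particular $x' = y \in L^p([a, b], \mathbb{R}^N)$, so $x \in \mathcal{W}^{1, p}([a, b], \mathbb{R}^N)$, and $\|x_n - x\|_{\mathcal{W}^{1, p}[a, b]} \to 0$ follows immediately from $x_n(a) \to x(a)$ together with $x_n' \to x'$ in $L^p$.

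I expect the only delicate point to be the identification $x' = y$, namely confirming that the uniform limit $x$ inherits absolute continuity and really has $y$ as its almost everywhere derivative, rather than being merely a continuous limit with no a priori control on its derivative. This is exactly where passing to the limit in the integral identity is essential, and where the embedding $L^p([a, b], \mathbb{R}^N) \hookrightarrow L^1([a, b], \mathbb{R}^N)$ on the finite interval does the necessary work.
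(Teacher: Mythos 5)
Your proposal is correct and follows essentially the same route as the paper: reduce Cauchyness in $\mathcal{W}^{1, p}$ to Cauchyness in $C([a, b], \mathbb{R}^N)$ and $L^p([a, b], \mathbb{R}^N)$, then pass to the limit in the identity $x_n(t) = x_n(a) + \int_a^t x_n'(s) \, \mathrm{d}s$ using the H\"{o}lder estimate $\|x_n' - y\|_{L^1[a, b]} \le (b - a)^{\frac{1}{q}} \|x_n' - y\|_{L^p[a, b]}$ to identify the limit as an absolutely continuous function with derivative $y$. The only cosmetic difference is that you invoke Lemma~\ref{lem:equivalent norm in W^{1, p}} explicitly at the outset, whereas the paper uses it implicitly; the substance is identical.
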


\begin{proof}
Let $(x_n)_{n = 1}^\infty$ be a Cauchy sequence in $\mathcal{W}^{1, p}([a, b], \mathbb{R}^N)$.
Then $(x_n)_{n = 1}^\infty$ is a Cauchy sequence in $C([a, b], \mathbb{R}^N)$,
and $(x'_n)_{n = 1}^\infty$ is a Cauchy sequence in $L^p([a, b], \mathbb{R}^N)$.
Since these spaces are complete, there are $x \in C([a, b], \mathbb{R}^N)$ and $y \in L^p([a, b], \mathbb{R}^N)$ such that
	\begin{equation*}
		\|x - x_n\|_{C[a, b]} \to 0
			\mspace{10mu} \text{and} \mspace{10mu}
		\|y - x'_n\|_{L^p[a, b]} \to 0
	\end{equation*}
as $n \to \infty$.
By the fundamental theorem of calculus for absolutely continuous functions, we have
	\begin{equation*}
		x_n(t) = x_n(a) + \int_a^t x_n'(s) \mspace{2mu} \mathrm{d}s
			\mspace{20mu}
		(t \in [a, b]).
	\end{equation*}
Then by taking the limit as $n \to \infty$, we obtain
	\begin{equation*}
		x(t) = x(a) + \int_a^t y(s) \mspace{2mu} \mathrm{d}s
			\mspace{20mu}
		(x \in [a, b])
	\end{equation*}
because
	\begin{align*}
		\biggl| \int_a^t (y(s) - x_n'(s)) \mspace{2mu} \mathrm{d}s \biggr|
		&\le \|y - x_n'\|_{L^1[a, b]} \\
		&\le (b - a)^{\frac{1}{q}} \|y - x_n'\|_{L^p[a, b]}.
	\end{align*}
Here $q$ is the H\"{o}lder conjugate of $p$.
This shows $x \in \AC([a, b], \mathbb{R}^N)$ and $x' = y \in L^p([a, b], \mathbb{R}^N)$.
Therefore, $(x_n)_{n = 1}^\infty$ converges to $x$ in $\mathcal{W}^{1, p}([a, b], \mathbb{R}^N)$.
\end{proof}

\begin{lemma}\label{lem:continuity of orbit, Sobolev history sp}
Let $1 \le p < \infty$ and $R, T > 0$ be given.
Then for all $x \in \mathcal{W}^{1, p}([-R, T], \mathbb{R}^N)$, the orbit
	\begin{equation*}
		[0, T] \ni t \mapsto R_tx \in \mathcal{W}^{1, p}([-R, 0], \mathbb{R}^N)
	\end{equation*}
is continuous.
\end{lemma}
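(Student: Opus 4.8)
The plan is to use the equivalent norm from Lemma~\ref{lem:equivalent norm in W^{1, p}} to split the claim into a supremum-norm part and an $L^p$-part for the derivatives, and then to handle the latter by the classical continuity of translation in $L^p$. First I would record that, for each fixed $t \in [0, T]$, the history $R_t x$ is itself absolutely continuous on $[-R, 0]$ with
\[
	(R_t x)'(\theta) = x'(t + \theta) \quad \text{for a.e. } \theta \in [-R, 0],
\]
which follows from the chain rule for absolutely continuous functions composed with the translation $\theta \mapsto t + \theta$; since $x' \in L^p([-R, T], \mathbb{R}^N)$ and $[t - R, t] \subset [-R, T]$, this shows $R_t x \in \mathcal{W}^{1, p}([-R, 0], \mathbb{R}^N)$, so the orbit is well defined. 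By Lemma~\ref{lem:equivalent norm in W^{1, p}} it then suffices to prove that $t \mapsto R_t x$ is continuous both as a map into $C([-R, 0], \mathbb{R}^N)$ and, through the derivative, as a map into $L^p([-R, 0], \mathbb{R}^N)$.

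The supremum-norm part is routine. For $s, t \in [0, T]$,
\[
	\|R_t x - R_s x\|_{C[-R, 0]} = \sup_{\theta \in [-R, 0]} |x(t + \theta) - x(s + \theta)|,
\]
and since $|(t + \theta) - (s + \theta)| = |t - s|$ is independent of $\theta$, the uniform continuity of $x$ on the compact interval $[-R, T]$ makes this supremum tend to $0$ as $s \to t$.

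The main work is the $L^p$-part, and this is the step I expect to be the crux. Let $g \in L^p(\mathbb{R}, \mathbb{R}^N)$ denote the extension of $x'$ by zero outside $[-R, T]$; then $(R_t x)'(\theta) = g(t + \theta)$ for a.e.\ $\theta$ and all $t \in [0, T]$. Using the change of variable $\sigma = s + \theta$ together with the translation-invariance of Lebesgue measure, I would estimate
\[
	\|(R_t x)' - (R_s x)'\|_{L^p[-R, 0]}^p
	= \int_{-R}^0 |g(t + \theta) - g(s + \theta)|^p \mspace{2mu} \mathrm{d}\theta
	\le \int_{\mathbb{R}} |g(\sigma + (t - s)) - g(\sigma)|^p \mspace{2mu} \mathrm{d}\sigma.
\]
The right-hand side is precisely the $L^p$-distance between $g$ and its translate by $t - s$, which converges to $0$ as $t - s \to 0$ by the continuity of translation in $L^p(\mathbb{R})$ for $1 \le p < \infty$. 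Combining this with the supremum-norm part and invoking the equivalent norm once more gives the continuity of $t \mapsto R_t x$ in $\mathcal{W}^{1, p}$.

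The only point requiring attention, rather than a genuine obstacle, is that extending $x'$ by zero may introduce jumps at the endpoints $-R$ and $T$; this is harmless because the continuity of translation in $L^p$ holds for every $L^p$ function (it follows from the density of compactly supported continuous functions in $L^p$), so the estimate above is valid without any additional regularity at the endpoints. I would therefore record the continuity of translation in $L^p$ as the single nontrivial external ingredient, noting that only continuity---not the differentiability proved later---is needed here.
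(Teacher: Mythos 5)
Your proof is correct and takes essentially the same approach as the paper: both reduce the claim to the continuity of $x$ together with the continuity of translation in $L^p$. The only cosmetic difference is that the paper evaluates $\|R_t x - R_{t_0} x\|_{\mathcal{W}^{1,p}[-R,0]}^p = |x(t-R) - x(t_0-R)|^p + \int_{-R}^0 |x'(t+\theta) - x'(t_0+\theta)|^p \, \mathrm{d}\theta$ directly (so only continuity of $x$ at the shifted left endpoint is used), whereas you pass through the equivalent norm of Lemma~\ref{lem:equivalent norm in W^{1, p}} and make explicit the zero-extension of $x'$ to $\mathbb{R}$, a detail the paper leaves implicit.
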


\begin{proof}
Let $t_0 \in [0, T]$ be fixed.
For all $t \in [0, T]$, we have
	\begin{equation*}
		\|R_tx - R_{t_0}x\|_{\mathcal{W}^{1, p}[-R, 0]}^p
		= |x(t - R) - x(t_0 - R)|^p + \int_{-R}^0 |x'(t + \theta) - x'(t_0 + \theta)|^p \mspace{2mu} \mathrm{d}\theta,
	\end{equation*}
where the right-hand side converges to $0$ as $t \to t_0$
by the continuity of $x$ and by the continuity of translation in $L^p$.
\end{proof}

\begin{lemma}\label{lem:continuity of history operators, Sobolev history sp}
Let $1 \le p < \infty$ and $R, T > 0$ be given.
Then the family of history operators given by
	\begin{equation*}
		\mathcal{W}^{1, p}([-R, T], \mathbb{R}^N) \ni x \mapsto R_tx \in \mathcal{W}^{1, p}([-R, 0], \mathbb{R}^N),
	\end{equation*}
where $t \in [0, T]$, is pointwise equicontinuous.
\end{lemma}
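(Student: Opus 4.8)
The plan is to reduce the asserted pointwise equicontinuity to a single uniform operator-norm bound and then to obtain that bound by a direct computation. Since each history operator $R_t$ is linear, for any fixed $x_0 \in \mathcal{W}^{1, p}([-R, T], \mathbb{R}^N)$ we have $R_t x - R_t x_0 = R_t(x - x_0)$, so the family $\{R_t\}_{t \in [0, T]}$ is equicontinuous at $x_0$ exactly when it is equicontinuous at $0$; by homogeneity of the norm this is in turn equivalent to the existence of a constant $M \ge 0$ such that
\[
	\|R_t x\|_{\mathcal{W}^{1, p}[-R, 0]} \le M \|x\|_{\mathcal{W}^{1, p}[-R, T]}
\]
for all $x$ and all $t \in [0, T]$. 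Thus it suffices to produce such a uniform bound $M$, after which pointwise equicontinuity follows for every base point at once.

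First I would compute the target norm explicitly. For $\theta \in [-R, 0]$ we have $(R_t x)(\theta) = x(t + \theta)$, hence $(R_t x)(-R) = x(t - R)$, and since translation commutes with differentiation, $(R_t x)'(\theta) = x'(t + \theta)$ for almost every $\theta$. Therefore
\[
	\|R_t x\|_{\mathcal{W}^{1, p}[-R, 0]}^p
	= |x(t - R)|^p + \int_{-R}^0 |x'(t + \theta)|^p \, \mathrm{d}\theta.
\]
The change of variables $s = t + \theta$ turns the integral into $\int_{t - R}^t |x'(s)|^p \, \mathrm{d}s$, which is at most $\|x'\|_{L^p[-R, T]}^p$ because $[t - R, t] \subset [-R, T]$ for every $t \in [0, T]$. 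This already controls the derivative part uniformly in $t$.

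The remaining zeroth-order term $|x(t - R)|$ is the one not directly dominated by the $L^p$-norm of the derivative, and this is exactly where Lemma~\ref{lem:equivalent norm in W^{1, p}} is needed. Since $t - R \in [-R, 0] \subset [-R, T]$, we have $|x(t - R)| \le \|x\|_{C[-R, T]}$, and the equivalent-norm lemma supplies a constant $c > 0$ with $\|x\|_{C[-R, T]} \le c \, \|x\|_{\mathcal{W}^{1, p}[-R, T]}$. Combining the two estimates gives
\[
	\|R_t x\|_{\mathcal{W}^{1, p}[-R, 0]}^p
	\le \|x\|_{C[-R, T]}^p + \|x'\|_{L^p[-R, T]}^p
	\le (c^p + 1) \, \|x\|_{\mathcal{W}^{1, p}[-R, T]}^p,
\]
so $M := (c^p + 1)^{1/p}$ works uniformly in $t \in [0, T]$, which establishes the claim.

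I do not expect a serious obstacle here: the real content is the observation that equicontinuity of a family of \emph{linear} maps is equivalent to a uniform bound on their operator norms, after which the estimate is essentially a one-line computation. The only point demanding care is the zeroth-order term $|x(t - R)|$, which is invisible to the derivative seminorm and must be absorbed through the sup-norm estimate provided by Lemma~\ref{lem:equivalent norm in W^{1, p}}; the derivative term, by contrast, is handled immediately by the change of variables and the inclusion $[t - R, t] \subset [-R, T]$.
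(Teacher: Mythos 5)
Your proof is correct and takes essentially the same route as the paper's: both reduce the claim, via linearity, to a single uniform bound on the operator norms and obtain it from the estimates $\sup_{\theta \in [-R,0]}|x(t+\theta)| \le \|x\|_{C[-R,T]}$ and $\int_{-R}^0 |x'(t+\theta)|^p \, \mathrm{d}\theta \le \|x'\|_{L^p[-R,T]}^p$ together with Lemma~\ref{lem:equivalent norm in W^{1, p}}, the only difference being that the paper estimates in the equivalent norm $\|x\|_{C} + \|x'\|_{L^p}$ (getting operator norm at most $1$) and lets the norm equivalence do the bookkeeping, whereas you carry the equivalence constant explicitly in the original $\mathcal{W}^{1,p}$-norms. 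One cosmetic slip: when $T > R$ the point $t - R$ need not lie in $[-R, 0]$, but it always lies in $[-R, T]$, which is all your estimate $|x(t-R)| \le \|x\|_{C[-R,T]}$ actually uses.
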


\begin{proof}
It is sufficient to show the equicontinuity at $0$ because the maps are linear.
Let $t \in [0, T]$.
Then for all $x \in \mathcal{W}^{1, p}([-R, T], \mathbb{R}^N)$,
	\begin{align*}
		\|R_tx\|_{C[-R, 0]} + \|(R_tx)'\|_{L^p[-R, 0]}
		&= \sup_{\theta \in [-R, 0]} |x(t + \theta)| 
		+ \biggl( \int_{-R}^0 |x'(t + \theta)|^p \mspace{2mu} \mathrm{d}\theta \biggr)^{\frac{1}{p}} \\
		&\le \|x\|_{C[-R, T]} + \|x'\|_{L^p[-R, T]}.
	\end{align*}
This shows the conclusion.
\end{proof}

\begin{remark}
By the preceding two lemmas,
	\begin{equation*}
		[0, T] \times \mathcal{W}^{1, p}([-R, T], \mathbb{R}^N) \ni (t, x) \mapsto R_tx \in \mathcal{W}^{1, p}([-R, 0], \mathbb{R}^N)
	\end{equation*}
is continuous.
\end{remark}

\begin{lemma}[Continuity of the prolongation operator]
Let $1 \le p < \infty$ and $R, T > 0$ be given.
Then the prolongation operator given by
	\begin{equation*}
		\mathcal{W}^{1, p}([-R, 0], \mathbb{R}^N) \ni \phi \mapsto \bar{\phi}|_{[-R, T]} \in \mathcal{W}^{1, p}([-R, T], \mathbb{R}^N)
	\end{equation*}
is a continuous linear map.
In particular,
	\begin{equation*}
		\bigl\| \bar{\phi}|_{[-R, T]} \bigr\|_{\mathcal{W}^{1, p}[-R, T]} = \|\phi\|_{\mathcal{W}^{1, p}[-R, 0]}
	\end{equation*}
holds for all $\phi \in \mathcal{W}^{1, p}([-R, 0], \mathbb{R}^N)$.
\end{lemma}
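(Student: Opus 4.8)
The plan is to show that the prolongation is a linear isometry; continuity then follows with no extra work. First I would verify that $\bar{\phi}|_{[-R, T]}$ genuinely lies in $\mathcal{W}^{1, p}([-R, T], \mathbb{R}^N)$. On $[-R, 0]$ the prolongation coincides with $\phi$, which is absolutely continuous with almost everywhere derivative $\phi' \in L^p([-R, 0], \mathbb{R}^N)$; on $[0, T]$ it is the constant $\phi(0)$, hence absolutely continuous with derivative $0$. Since the two pieces agree at $t = 0$, the glued function is continuous there, and gluing two absolutely continuous functions along a point of agreement yields an absolutely continuous function on the whole interval $[-R, T]$. Consequently $\bar{\phi}|_{[-R, T]}$ is absolutely continuous, and its almost everywhere derivative is
	\begin{equation*}
		\bigl(\bar{\phi}|_{[-R, T]}\bigr)'(t) =
		\begin{cases}
			\phi'(t) & (t \in [-R, 0]), \\
			0 & (t \in [0, T]),
		\end{cases}
	\end{equation*}
which belongs to $L^p([-R, T], \mathbb{R}^N)$.

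Next I would compute the $\mathcal{W}^{1, p}$-norm directly from its definition. Since $\bar{\phi}(-R) = \phi(-R)$ and the derivative vanishes on $[0, T]$, we have
	\begin{equation*}
		\int_{-R}^T \bigl| \bigl(\bar{\phi}|_{[-R, T]}\bigr)'(t) \bigr|^p \mspace{2mu} \mathrm{d}t
		= \int_{-R}^0 |\phi'(t)|^p \mspace{2mu} \mathrm{d}t
		= \|\phi'\|_{L^p[-R, 0]}^p,
	\end{equation*}
so that
	\begin{equation*}
		\bigl\| \bar{\phi}|_{[-R, T]} \bigr\|_{\mathcal{W}^{1, p}[-R, T]}^p
		= |\phi(-R)|^p + \|\phi'\|_{L^p[-R, 0]}^p
		= \|\phi\|_{\mathcal{W}^{1, p}[-R, 0]}^p.
	\end{equation*}
This establishes the claimed norm equality.

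Finally, linearity of $\phi \mapsto \bar{\phi}|_{[-R, T]}$ follows immediately from the definition of the static prolongation, since $\overline{\phi + \psi} = \bar{\phi} + \bar{\psi}$ and $\overline{c\phi} = c\bar{\phi}$ hold pointwise. Together with the norm equality, this shows the operator is a linear isometry, hence bounded and therefore continuous. I do not anticipate any genuine obstacle: the only point requiring care is the absolute continuity at the junction $t = 0$, and this is settled by the elementary gluing fact together with the continuity of $\phi$ at $0$; the remaining computation is a direct splitting of the integral defining the norm.
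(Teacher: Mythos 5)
Your proof is correct and follows essentially the same route as the paper: the heart of both arguments is the observation that $\bar{\phi}'$ vanishes on $[0,T]$, so the $\mathcal{W}^{1,p}$-norm integral splits to give the isometry $\bigl\| \bar{\phi}|_{[-R,T]} \bigr\|_{\mathcal{W}^{1,p}[-R,T]} = \|\phi\|_{\mathcal{W}^{1,p}[-R,0]}$, from which continuity of the linear map is immediate. You merely spell out two points the paper leaves implicit --- the absolute continuity of the glued function at $t=0$ and the pointwise linearity of $\phi \mapsto \bar{\phi}$ --- which is a harmless (indeed welcome) amplification rather than a different argument.
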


\begin{proof}
For every $\phi \in \mathcal{W}^{1, p}([-R, 0], \mathbb{R}^N)$, we have
	\begin{align*}
		\bigl\| \bar{\phi}|_{[-R, T]} \bigr\|_{\mathcal{W}^{1, p}[-R, T]}
		&=
		\biggl(
			\bigl| \bar{\phi}(-R) \bigr|^p
			+ \int_{-R}^T \bigl| \bar{\phi}'(t) \bigr|^p \mspace{2mu} \mathrm{d}t
		\biggr)^{\frac{1}{p}} \\
		&=
		\biggl(
			|\phi(-R)|^p + \int_{-R}^0 |\phi'(\theta)|^p \mspace{2mu} \mathrm{d}t
		\biggr)^{\frac{1}{p}} \\
		&= \|\phi\|_{\mathcal{W}^{1, p}[-R, 0]}.
	\end{align*}
Therefore, the conclusion holds.
\end{proof}

\section{Main results}\label{sec:main results}

In the proofs,
the function space $\mathcal{W}^{1, p}([-R, 0], \mathbb{R}^N)$ is abbreviated as $\mathcal{W}^{1, p}[-R, 0]$.
This is similar to other function spaces.

\subsection{A special case}\label{subsec:a special case}

Let $N \ge 1$ be an integer, $f \colon \mathbb{R}^N \to \mathbb{R}^N$ be a continuous function, and $R > 0$ be a constant.
We consider a DDE~\eqref{eq:single const delay I}
	\begin{equation*}
		\dot{x}(t) = f(x(t - r))
	\end{equation*}
and its IVP~\eqref{eq:IVP, single const delay I}
	\begin{equation*}
		\left\{
		\begin{alignedat}{2}
			\dot{x}(t) &= f(x(t - r)), & \mspace{20mu} & t \ge 0, \\
			x(t) &= \phi(t), & & t \in [-R, 0]
		\end{alignedat}
		\right.
	\end{equation*}
for each $(\phi, r) \in C([-R, 0], \mathbb{R}^N) \times (0, R]$.
The solution $x(\cdot; \phi, r)$ of \eqref{eq:IVP, single const delay I} is expressed by
	\begin{equation*}
		x(t; \phi, r) = \phi(0) + \int_0^t f(\phi(s - r)) \mspace{2mu} \mathrm{d}s
	\end{equation*}
on the interval $[0, r]$, which is continued to $[-R, +\infty)$ by the method of steps.
Let $|\cdot|$ be a norm on $\mathbb{R}^N$.
The operator norm of a linear map $L \colon \mathbb{R}^N \to \mathbb{R}^N$ with respect to the above norm $|\cdot|$
will be denoted by $\|L\|$.

\begin{proposition}
Let $1 \le p < \infty$ and $0 < T < R$ be given.
Let $\phi \in \mathcal{W}^{1, p}([-R, 0], \mathbb{R}^N)$.
If $f$ is of class $C^1$, then
	\begin{equation*}
		[T, R] \ni r \mapsto x(\cdot; \phi, r) \in \mathcal{W}^{1, p}([-R, T], \mathbb{R}^N)
	\end{equation*}
is a continuously differentiable function whose derivative is given by
	\begin{equation*}
		\left( \frac{\partial}{\partial r}x(\cdot; \phi, r) \right)(t)
		= B_{\phi, r}(t)
		:=
		\begin{cases}
			0 & (t \in [-R, 0]), \\
			-\int_0^t (f \circ \phi)'(s - r) \mspace{2mu} \mathrm{d}s & (t \in [0, T])
		\end{cases}
	\end{equation*}
in $\mathcal{W}^{1, p}([-R, T], \mathbb{R}^N)$.
\end{proposition}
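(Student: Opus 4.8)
The plan is to use the equivalent norm of Lemma~\ref{lem:equivalent norm in W^{1, p}} so that the $C[-R,T]$-part and the $L^p[-R,T]$-part of the $\mathcal{W}^{1,p}$-norm can be handled separately, and to reduce the entire $r$-dependence to a single translation in $L^p$. First I would record the explicit form of the solution: for $r \in [T,R]$ and $s \in [0,t] \subseteq [0,T]$ one has $s - r \in [-R,0]$, so by the method of steps
\[
	x(t;\phi,r) = \phi(0) + \int_0^t f(\phi(s-r)) \, \mathrm{d}s
	\qquad (t \in [0,T]),
\]
while $x(t;\phi,r) = \phi(t)$ on $[-R,0]$ is independent of $r$. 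Writing $\psi := f \circ \phi$, I would check that $\psi \in \mathcal{W}^{1,p}([-R,0],\mathbb{R}^N)$: since $f$ is $C^1$ and $\phi([-R,0])$ is compact, $f$ is Lipschitz near this set, so $\psi$ is absolutely continuous, and $(f\circ\phi)' = Df(\phi)\,\phi'$ lies in $L^p$ because $Df(\phi(\cdot))$ is bounded and $\phi' \in L^p$. Thus $x(\cdot;\phi,r)$ has left-endpoint value $\phi(-R)$, which is constant in $r$, and a.e.\ derivative
\[
	x'(\cdot;\phi,r) =
	\begin{cases}
		\phi' & \text{on } [-R,0], \\
		\psi(\cdot - r) & \text{on } [0,T],
	\end{cases}
\]
so that, through the equivalent norm, the whole $r$-dependence of $x(\cdot;\phi,r)$ is concentrated in the $L^p$-function $t \mapsto \psi(t-r)$ on $[0,T]$.

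For differentiability I would form the difference quotient $Q_h := h^{-1}\big( x(\cdot;\phi,r+h) - x(\cdot;\phi,r) \big)$ for $r+h \in [T,R]$, which is taken one-sided at $r = T$ and $r = R$. Both $Q_h$ and $B_{\phi,r}$ vanish on $[-R,0]$ and agree with value $0$ at $t=-R$, so only the behaviour on $[0,T]$ matters. There the a.e.\ derivative of $Q_h$ is $h^{-1}\big( \psi(t-r-h) - \psi(t-r) \big)$ while $B_{\phi,r}'(t) = -\psi'(t-r)$. After the change of variable $u = t-r$, which maps $[0,T]$ onto $[-r,T-r] \subseteq [-R,0]$ and keeps $u-h = t-(r+h) \in [-R,0]$ for admissible $h$, the $L^p$-part of $\|Q_h - B_{\phi,r}\|_{\mathcal{W}^{1,p}}$ is dominated by
\[
	\Big\| \, u \mapsto \tfrac{\psi(u-h) - \psi(u)}{h} + \psi'(u) \, \Big\|_{L^p},
\]
which tends to $0$ as $h \to 0$ by the differentiability of translation in $L^p$ (Corollary~\ref{cor:differentiability of translation in L^p}). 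The $C[-R,T]$-part then comes for free: since $Q_h - B_{\phi,r}$ is the integral from $0$ of the above integrand, Hölder's inequality bounds $\|Q_h - B_{\phi,r}\|_{C[0,T]}$ by $T^{1/q}$ times the same $L^p$-quantity, where $q$ is the Hölder conjugate of $p$. Hence $Q_h \to B_{\phi,r}$ in $\mathcal{W}^{1,p}([-R,T],\mathbb{R}^N)$, which gives differentiability with derivative $B_{\phi,r}$.

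To upgrade to $C^1$ I would verify that $r \mapsto B_{\phi,r}$ is continuous. Its derivative equals $-\psi'(\cdot-r)$ on $[0,T]$ and $0$ on $[-R,0]$, and its left-endpoint value is $0$; so, again via the equivalent norm and Hölder, $\|B_{\phi,r} - B_{\phi,r_0}\|_{\mathcal{W}^{1,p}}$ is controlled by $\|\psi'(\cdot-r) - \psi'(\cdot-r_0)\|_{L^p[0,T]}$, which tends to $0$ as $r \to r_0$ by the continuity of translation in $L^p$ applied to $\psi' \in L^p$. Combined with the previous step this yields the claimed continuous differentiability.

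I expect the main obstacle to be the $L^p$-convergence of the translated difference quotient to $-\psi'$, i.e.\ the differentiability of translation in $L^p$: this is the genuine analytic content, supplied by Appendix~\ref{sec:differentiability of translation in L^p}, and it is exactly the point where the Sobolev regularity of $\phi$ (hence of $\psi$) is indispensable, whereas the sup-norm estimates are merely Hölder consequences of it. A secondary, purely bookkeeping, difficulty is ensuring that the translated arguments $u$ and $u-h$ remain inside $[-R,0]$ so that no extension of $\psi$ is required; this is what forces the restriction $r+h \in [T,R]$ and the one-sided differentiation at the endpoints $r=T$ and $r=R$.
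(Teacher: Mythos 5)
Your proposal is correct and follows essentially the same route as the paper: show $f \circ \phi \in \mathcal{W}^{1,p}([-R,0],\mathbb{R}^N)$, reduce the difference quotient to a translated $L^p$-function, invoke Corollary~\ref{cor:differentiability of translation in L^p} for differentiability, and use continuity of translation in $L^p$ for continuity of $r \mapsto B_{\phi, r}$. The only cosmetic differences are your detour through the equivalent norm of Lemma~\ref{lem:equivalent norm in W^{1, p}} with a H\"{o}lder estimate for the sup-part (unnecessary, since the $\mathcal{W}^{1,p}$-norm of a function vanishing on $[-R,0]$ is just the $L^p$-norm of its derivative, as the paper exploits directly) and your change of variables $u = t - r$, which neatly makes explicit the domain bookkeeping that the paper leaves implicit.
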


\begin{proof}
Since $f$ is locally Lipschitz continuous,
$f \circ \phi \colon [-R, 0] \to \mathbb{R}^N$ is also absolutely continuous.
Then $f \circ \phi$ is differentiable almost everywhere, and
	\begin{equation*}
		(f \circ \phi)'(\theta) = Df(\phi(\theta))\phi'(\theta)
	\end{equation*}
holds for almost all $\theta \in [-R, 0]$.
Therefore,
	\begin{align*}
		\int_{-R}^0 |(f \circ \phi)'(\theta)|^p \mspace{2mu} \mathrm{d}\theta
		&\le \int_{-R}^0 \|Df(\phi(\theta))\|^p |\phi'(\theta)|^p \mspace{2mu} \mathrm{d}\theta \\
		&\le \sup_{\theta \in [-R, 0]} \|Df(\phi(\theta))\|^p \cdot \|\phi'\|_{L^p[-R, 0]}^p,
	\end{align*}
which shows $f \circ \phi \in \mathcal{W}^{1, p}([-R, 0], \mathbb{R}^N)$.

Let $r_0 \in [T, R]$ be fixed.
Then for all $r \in [R, T]$ and all $t \in [-R, T]$,
	\begin{align*}
		&\frac{1}{r - r_0}(x(t; \phi, r) - x(t; \phi, r_0)) - B_{\phi, r_0}(t) \\
		&=
		\begin{cases}
			0 & (t \in [-R, 0]), \\
			\frac{1}{r - r_0} \int_0^t
				\bigl( f(\phi(s - r)) - f(\phi(s - r_0)) + (r - r_0)(f \circ \phi)'(s - r_0) \bigr)
			\mspace{2mu} \mathrm{d}s & (t \in [0, T]).
		\end{cases}
	\end{align*}
Therefore,
	\begin{align*}
		&\left\| \frac{1}{r - r_0}(x(\cdot; \phi, r) - x(\cdot; \phi, r_0)) - B_{\phi, r_0} \right\|_{\mathcal{W}^{1, p}[-R, T]} \\
		&=
		\frac{1}{|r - r_0|} \left( \int_0^T
			|(f \circ \phi)(t - r) - (f \circ \phi)(t - r_0) + (r - r_0) (f \circ \phi)'(t - r_0)|
		\mspace{2mu} \mathrm{d}t \right)^{\frac{1}{p}} \\
		&\to 0
	\end{align*}
as $r \to r_0$ by the differentiability of translation in $L^p$ (Corollary~\ref{cor:differentiability of translation in L^p}).
The continuity of the derivative also holds because
	\begin{align*}
		\|B_{\phi, r} - B_{\phi, r_0}\|_{\mathcal{W}^{1, p}[-R, T]}
		&= \left( \int_0^T |(f \circ \phi)'(t - r) - (f \circ \phi)'(t - r_0)|^p \mspace{2mu} \mathrm{d}t \right)^{\frac{1}{p}} \\
		&\to 0
	\end{align*}
as $r \to r_0$ by the continuity of translation in $L^p$.
\end{proof}

\begin{proposition}
Let $1 \le p < \infty$ and $0 < T < R$ be given.
Suppose that $f$ is of class $C^1$.
Then the family of functions
	\begin{equation*}
		\mathcal{W}^{1, p}([-R, 0], \mathbb{R}^N) \ni \phi \mapsto B_{\phi, r} \in \mathcal{W}^{1, p}([-R, T], \mathbb{R}^N),
	\end{equation*}
where $r \in [T, R]$, is pointwise equicontinuous.
\end{proposition}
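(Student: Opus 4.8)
The plan is to reduce the claimed equicontinuity to the continuity of the single Nemytskii-type map $\phi \mapsto (f \circ \phi)'$ from $\mathcal{W}^{1,p}[-R,0]$ into $L^p[-R,0]$, after first eliminating the parameter $r$ by a change of variables. Fix $\phi_0 \in \mathcal{W}^{1,p}[-R,0]$ and $r \in [T,R]$. Since $B_{\phi,r}(t) = 0 = B_{\phi_0,r}(t)$ for $t \in [-R,0]$, in particular $(B_{\phi,r}-B_{\phi_0,r})(-R)=0$, so the $\mathcal{W}^{1,p}[-R,T]$-norm of the difference is just the $L^p[-R,T]$-norm of its almost everywhere derivative. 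On $[0,T]$ that derivative equals $-\big[(f\circ\phi)'(t-r) - (f\circ\phi_0)'(t-r)\big]$, and it vanishes on $[-R,0]$, whence
\begin{equation*}
\|B_{\phi,r}-B_{\phi_0,r}\|_{\mathcal{W}^{1,p}[-R,T]}^p = \int_0^T \big|(f\circ\phi)'(t-r) - (f\circ\phi_0)'(t-r)\big|^p \, \mathrm{d}t.
\end{equation*}

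The key observation is that the substitution $u = t - r$ sends $[0,T]$ onto $[-r, T-r]$, and because $r \in [T,R]$ this interval is contained in $[-R,0]$. Hence the right-hand side is bounded above by $\int_{-R}^0 |(f\circ\phi)'(u)-(f\circ\phi_0)'(u)|^p \,\mathrm{d}u$, a quantity independent of $r$. Therefore
\begin{equation*}
\|B_{\phi,r}-B_{\phi_0,r}\|_{\mathcal{W}^{1,p}[-R,T]} \le \|(f\circ\phi)' - (f\circ\phi_0)'\|_{L^p[-R,0]}
\end{equation*}
uniformly over $r \in [T,R]$, and the whole family becomes equicontinuous at $\phi_0$ once the right-hand side is shown to tend to $0$ as $\phi \to \phi_0$ in $\mathcal{W}^{1,p}[-R,0]$.

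For that final continuity statement I would use the product formula $(f\circ\phi)'(\theta) = Df(\phi(\theta))\phi'(\theta)$ established in the previous proposition and split
\begin{equation*}
Df(\phi)\phi' - Df(\phi_0)\phi_0' = Df(\phi)(\phi'-\phi_0') + \big(Df(\phi)-Df(\phi_0)\big)\phi_0'.
\end{equation*}
By Lemma~\ref{lem:equivalent norm in W^{1, p}}, convergence $\phi \to \phi_0$ in $\mathcal{W}^{1,p}$ means $\phi \to \phi_0$ uniformly and $\phi' \to \phi_0'$ in $L^p$. For $\phi$ uniformly close to $\phi_0$, the ranges of all such $\phi$ stay in a fixed compact subset of $\mathbb{R}^N$, on which $Df$ is bounded and uniformly continuous. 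The first term is then controlled by $\sup_\theta \|Df(\phi(\theta))\| \cdot \|\phi'-\phi_0'\|_{L^p[-R,0]}$, whose supremum factor is uniformly bounded and whose $L^p$ factor tends to $0$; the second by $\sup_\theta \|Df(\phi(\theta))-Df(\phi_0(\theta))\| \cdot \|\phi_0'\|_{L^p[-R,0]}$, where the supremum tends to $0$ by uniform continuity of $Df$ together with the uniform convergence $\phi \to \phi_0$.

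The step I expect to carry the real content is the change of variables that removes the dependence on $r$: it is exactly the inclusion $[-r, T-r] \subset [-R,0]$, valid precisely because $r \ge T$, that converts the $r$-dependent integrals into a single $r$-free bound and thereby upgrades pointwise continuity to pointwise equicontinuity. Once that reduction is in place, the remainder is the standard continuity of a superposition operator, and the only care needed is to keep the test family $\phi$ in a uniform compact neighborhood of the range of $\phi_0$ so that the bounds on $Df$ and its modulus of continuity are uniform.
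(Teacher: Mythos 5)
Your proposal is correct and follows essentially the same route as the paper: the paper likewise reduces $\|B_{\phi,r}-B_{\phi_0,r}\|_{\mathcal{W}^{1,p}[-R,T]}$ to the $L^p$-norm of $(f\circ\phi)'(t-r)-(f\circ\phi_0)'(t-r)$, exploits (implicitly, via the sup over $\theta\in[-R,0]$ and the $\mathcal{W}^{1,p}[-R,0]$-norms) that $t-r\in[-R,0]$ for $t\in[0,T]$, $r\in[T,R]$ to get $r$-free bounds, and splits by Minkowski into two terms controlled by the uniform continuity of $Df$ on bounded sets. The only cosmetic differences are that you make the change of variables $u=t-r$ explicit and split the increment as $Df(\phi)(\phi'-\phi_0')+(Df(\phi)-Df(\phi_0))\phi_0'$ rather than the paper's symmetric variant $(Df(\phi)-Df(\phi_0))\phi'+Df(\phi_0)(\phi'-\phi_0')$.
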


\begin{proof}
Let $\phi_0 \in \mathcal{W}^{1, p}([-R, 0], \mathbb{R}^N)$ be fixed and $r \in [T, R]$ be a parameter.
Then we have
	\begin{align*}
		&|(f \circ \phi)'(t - r) - (f \circ \phi_0)'(t - r)| \\
		&\le \|Df(\phi(t - r)) - Df(\phi_0(t - r))\| |\phi'(t - r)| \\
			&\mspace{40mu} + \|Df(\phi_0(t - r))\| |\phi'(t - r) - \phi_0'(t - r)|
	\end{align*}
for all $\phi \in \mathcal{W}^{1, p}([-R, 0], \mathbb{R}^N)$ and all $t \in [0, T]$.
Therefore, by the Minkowski inequality,
	\begin{align*}
		&\|B_{\phi, r} - B_{\phi_0, r}\|_{\mathcal{W}^{1, p}[-R, T]} \\
		&=
		\biggl( \int_0^T
			|(f \circ \phi)'(t - r) - (f \circ \phi_0)'(t - r)|^p
		\mspace{2mu} \mathrm{d}t \biggr)^{\frac{1}{p}} \\
		&\le
		\biggl( \int_0^T
			\|Df(\phi(t - r)) - Df(\phi_0(t - r))\|^p |\phi'(t - r)|^p
		\mspace{2mu} \mathrm{d}t \biggr)^{\frac{1}{p}} \\
		&\mspace{40mu} +
		\biggl( \int_0^T
			\|Df(\phi_0(t - r))\|^p|\phi'(t - r) - \phi_0'(t - r)|^p
		\mspace{2mu} \mathrm{d}t \biggr)^{\frac{1}{p}} \\
		&\le
		\sup_{\theta \in [-R, 0]} \|Df(\phi(\theta)) - Df(\phi_0(\theta))\| \cdot \|\phi\|_{\mathcal{W}^{1, p}[-R, 0]} \\
		&\mspace{40mu} + \sup_{\theta \in [-R, 0]} \|Df(\phi_0(\theta))\| \cdot \|\phi - \phi_0\|_{\mathcal{W}^{1, p}[-R, 0]}.
	\end{align*}
The right-hand side converges to $0$ as $\|\phi - \phi_0\|_{\mathcal{W}^{1, p}[-R, 0]} \to 0$ uniformly in $r$
because $Df$ is uniformly continuous on any closed and bounded set.
\end{proof}

\begin{corollary}
Let $1 \le p < \infty$ and $0 < T < R$ be given.
Suppose that $f$ is of class $C^1$.
Then
	\begin{equation*}
		\mathcal{W}^{1, p}([-R, 0], \mathbb{R}^N) \times [T, R] \ni (\phi, r) \mapsto B_{\phi, r} \in \mathcal{W}^{1, p}([-R, T], \mathbb{R}^N)
	\end{equation*}
is continuous.
\end{corollary}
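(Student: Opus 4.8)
The plan is to derive the joint continuity directly from the two preceding propositions, which together supply exactly the two ingredients of a standard ``separate continuity plus equicontinuity'' argument. The first proposition gives the continuity of $[T, R] \ni r \mapsto B_{\phi, r}$ for each \emph{fixed} history $\phi$, while the second gives the pointwise equicontinuity of the family $\{\phi \mapsto B_{\phi, r}\}_{r \in [T, R]}$ indexed by the \emph{delay parameter}. The point is that these combine to control $B_{\phi, r}$ near an arbitrary base point $(\phi_0, r_0)$, with no additional estimate required.

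First I would fix $(\phi_0, r_0) \in \mathcal{W}^{1, p}[-R, 0] \times [T, R]$ and, for a nearby point $(\phi, r)$, split the increment by the triangle inequality as
	\begin{equation*}
		\|B_{\phi, r} - B_{\phi_0, r_0}\|_{\mathcal{W}^{1, p}[-R, T]}
		\le \|B_{\phi, r} - B_{\phi_0, r}\|_{\mathcal{W}^{1, p}[-R, T]}
		+ \|B_{\phi_0, r} - B_{\phi_0, r_0}\|_{\mathcal{W}^{1, p}[-R, T]}.
	\end{equation*}
The first term is estimated \emph{uniformly in} $r \in [T, R]$ by the pointwise equicontinuity of $\phi \mapsto B_{\phi, r}$ at $\phi_0$: given $\ep > 0$ there is $\delta_1 > 0$, independent of $r$, such that $\|\phi - \phi_0\|_{\mathcal{W}^{1, p}[-R, 0]} < \delta_1$ forces this term below $\ep / 2$. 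The second term involves only the fixed history $\phi_0$, so the continuity of $r \mapsto B_{\phi_0, r}$ at $r_0$ yields $\delta_2 > 0$ with $|r - r_0| < \delta_2$ forcing it below $\ep / 2$. Taking $(\phi, r)$ in the corresponding neighborhood of $(\phi_0, r_0)$ then makes the left-hand side less than $\ep$, which is the asserted continuity.

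The only subtlety, and hence the step I expect to require the most care, is the pairing of the two propositions with the two terms of the split. The decomposition must be arranged so that the term varying in the \emph{history} argument is the one handled by equicontinuity — since only equicontinuity supplies a single $\delta_1$ valid for all $r$ simultaneously — whereas the term varying in the \emph{delay} argument keeps $\phi_0$ frozen and is handled by the fixed-$\phi$ continuity. Were the roles reversed, the per-$r$ moduli of continuity in the history variable might fail to admit a positive infimum over $r$, and the argument would collapse. With this pairing, however, the proof reduces entirely to the triangle inequality above together with the two quoted propositions.
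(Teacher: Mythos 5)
Your proposal is correct and takes essentially the same route as the paper: the identical triangle-inequality split $\|B_{\phi, r} - B_{\phi_0, r_0}\| \le \|B_{\phi, r} - B_{\phi_0, r}\| + \|B_{\phi_0, r} - B_{\phi_0, r_0}\|$, with the first term controlled uniformly in $r$ by the pointwise equicontinuity proposition and the second by the continuity of $r \mapsto B_{\phi_0, r}$ at fixed $\phi_0$. Your closing observation about why the pairing must be arranged this way is precisely the point the paper leaves implicit in its one-line justification.
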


\begin{proof}
Let $(\phi_0, r_0) \in \mathcal{W}^{1, p}[-R, 0] \times [T, R]$ be fixed.
Then for all $(\phi, r) \in \mathcal{W}^{1, p}[-R, 0] \times [T, R]$, we have
	\begin{align*}
		&\|B_{\phi, r} - B_{\phi_0, r_0}\|_{\mathcal{W}^{1, p}[-R, T]} \\
		&\le \|B_{\phi, r} - B_{\phi_0, r}\|_{\mathcal{W}^{1, p}[-R, T]} + \|B_{\phi_0, r} - B_{\phi_0, r_0}\|_{\mathcal{W}^{1, p}[-R, T]},
	\end{align*}
where the right-hand side converges to $0$ as $(\phi, r) \to (\phi_0, r_0)$ from the above propositions.
\end{proof}

\subsection{A general case}\label{subsec:a general case}

Let $N \ge 1$ be an integer, $f \colon \mathbb{R}^N \times \mathbb{R}^N \to \mathbb{R}^N$ be a continuous function, and $R > 0$ be a constant.
We consider a DDE~\eqref{eq:single const delay II}
	\begin{equation*}
		\dot{x}(t) = f(x(t), x(t - r))
	\end{equation*}
and its IVP~\eqref{eq:IVP, single const delay II}
	\begin{equation*}
		\left\{
		\begin{alignedat}{2}
			\dot{x}(t) &= f(x(t), x(t - r)), & \mspace{20mu} & t \ge 0, \\
			x(t) &= \phi(t), & & t \in [-R, 0]
		\end{alignedat}
		\right.
	\end{equation*}
for each $(\phi, r) \in C([-R, 0], \mathbb{R}^N) \times [0, R]$.
We note that the case $r = 0$ is permitted.
Let $|\cdot|$ be a norm on $\mathbb{R}^N$.
The following product norm on $\mathbb{R}^N \times \mathbb{R}^N$
	\begin{equation*}
		\|(x_1, x_2)\| := |x_1| + |x_2|
	\end{equation*}
will be used.
The operator norms of linear maps $L_1 \colon \mathbb{R}^N \times \mathbb{R}^N \to \mathbb{R}^N$ and $L_2 \colon \mathbb{R}^N \to \mathbb{R}^N$
with respect to the corresponding norms are denoted by $\|L_1\|$ and $\|L_2\|$, respectively.

Let
	\begin{equation*}
		y(t) := x(t) - \bar{\phi}(t) \mspace{20mu} (t \in [0, T])
	\end{equation*}
for some $T > 0$.
Then $x$ is a solution of \eqref{eq:IVP, single const delay II} on $[0, T]$ if and only if $y$ satisfies
	\begin{align*}
		y(t)
		&= \mathcal{T}(y, \phi, r)(t) \\
		&:=
		\begin{cases}
			0 & (t \in [-R, 0]), \\
			\int_0^t f \bigl( (y + \bar{\phi})(s), (y + \bar{\phi})(s - r) \bigr) \mspace{2mu} \mathrm{d}s & (t \in [0, T]).
		\end{cases}
	\end{align*}
The above argument means that $y$ is a fixed point of $\mathcal{T}(\cdot, \phi, r)$
if and only if $x := y + \bar{\phi}$ is a solution of \eqref{eq:IVP, single const delay II}.

For any continuous $y$, $\mathcal{T}(y, \phi, r)$ is absolutely continuous and
	\begin{equation*}
		\|\mathcal{T}(y, \phi, r)\|_{\mathcal{W}^{1, p}[-R, T]}
		=
		\left( \int_0^T
			\bigl| f \bigl( (y + \bar{\phi})(t), (y + \bar{\phi})(t - r) \bigr) \bigr|^p \mspace{2mu}
		\mathrm{d}t \right)^{\frac{1}{p}}
		< \infty
	\end{equation*}
because the integrand is continuous.

\subsubsection{Uniform contraction}

\begin{notation}
Let $T > 0$ be given.
For each $\delta > 0$, let
	\begin{align*}
		\varGamma(\delta)
		:=
		\left\{ \mspace{2mu}
			\gamma \in C([-R, T], \mathbb{R}^N) :
			\text{$R_0\gamma = 0$, $\|\gamma\|_{C[-R, T]} < \delta$}
		\mspace{2mu} \right\}, \\
		\bar{\varGamma}(\delta)
		:=
		\left\{ \mspace{2mu}
			\gamma \in C([-R, T], \mathbb{R}^N) :
			\text{$R_0\gamma = 0$, $\|\gamma\|_{C[-R, T]} \le \delta$}
		\mspace{2mu} \right\},
	\end{align*}
which are considered to be metric spaces with the metric induced by supremum norm.
\end{notation}

\begin{notation}
Let $T > 0$ be given.
For each $1 \le p < \infty$ and $\delta > 0$, let
	\begin{align*}
		\varGamma_{1, p}(\delta)
		:=
		\left\{ \mspace{2mu}
			\gamma \in \mathcal{W}^{1, p}([-R, T], \mathbb{R}^N) :
			\text{$R_0\gamma = 0$, $\|\gamma\|_{\mathcal{W}^{1, p}[-R, T]} < \delta$}
		\mspace{2mu} \right\}, \\
		\bar{\varGamma}_{1, p}(\delta)
		:=
		\left\{ \mspace{2mu}
			\gamma \in \mathcal{W}^{1, p}([-R, T], \mathbb{R}^N) :
			\text{$R_0\gamma = 0$, $\|\gamma\|_{\mathcal{W}^{1, p}[-R, T]} \le \delta$}
		\mspace{2mu} \right\},
	\end{align*}
which are considered to be metric spaces with the metric induced by $\mathcal{W}^{1, p}$-norm.
\end{notation}

\begin{lemma}
Let $1 \le p < \infty$, $B \subset C([-R, 0], \mathbb{R}^N)$ be a bounded set, and $\delta > 0$.
Then for all sufficiently small $T > 0$, the family of maps
	\begin{equation*}
		\mathcal{T}(\cdot, \phi, r) \colon \bar{\varGamma}(\delta) \to \varGamma_{1, p}(\delta),
	\end{equation*}
where $(\phi, r) \in B \times [0, R]$, is well-defined.
\end{lemma}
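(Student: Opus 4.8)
The plan is to verify the three requirements packaged into the target set $\varGamma_{1, p}(\delta)$. Two of them are essentially immediate from the construction of $\mathcal{T}$: by definition $\mathcal{T}(\gamma, \phi, r)$ vanishes identically on $[-R, 0]$, so $R_0 \mathcal{T}(\gamma, \phi, r) = 0$; and the discussion preceding this lemma already records that, for continuous $\gamma$, the function $\mathcal{T}(\gamma, \phi, r)$ is absolutely continuous with almost everywhere derivative $t \mapsto f\bigl((\gamma + \bar{\phi})(t), (\gamma + \bar{\phi})(t - r)\bigr)$ on $[0, T]$ (and $0$ on $[-R, 0]$), hence lies in $\mathcal{W}^{1, p}([-R, T], \mathbb{R}^N)$. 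The genuine content is therefore the strict norm bound $\|\mathcal{T}(\gamma, \phi, r)\|_{\mathcal{W}^{1, p}[-R, T]} < \delta$, together with the fact that a single value of $T$ can be chosen to make this hold simultaneously for every $\gamma \in \bar{\varGamma}(\delta)$ and every $(\phi, r) \in B \times [0, R]$.

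First I would produce a uniform supremum bound on the arguments of $f$. Since $B$ is bounded in $C([-R, 0], \mathbb{R}^N)$, fix $M_1 \ge \sup_{\phi \in B} \|\phi\|_{C[-R, 0]}$; because the static prolongation merely repeats the value $\phi(0)$ on $\mathbb{R}_+$, one has $\|\bar{\phi}\|_{C[-R, T]} = \|\phi\|_{C[-R, 0]} \le M_1$ for all $\phi \in B$. For $\gamma \in \bar{\varGamma}(\delta)$ the triangle inequality then gives $\|\gamma + \bar{\phi}\|_{C[-R, T]} \le \delta + M_1 =: M$, so that $(\gamma + \bar{\phi})(s)$ and $(\gamma + \bar{\phi})(s - r)$ remain in the compact ball $\{\, x \in \mathbb{R}^N : |x| \le M \,\}$ for all $s \in [0, T]$ and all $r \in [0, R]$. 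Note that $M$ depends only on $\delta$ and on the bound $M_1$ for $B$, not on the individual $\phi$ or on $r$.

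Next I would invoke continuity of $f$. Set $K := \max\{\, |f(x_1, x_2)| : |x_1| \le M,\ |x_2| \le M \,\}$, which is finite by compactness of the box and continuity of $f$. Using the norm formula recorded just before the lemma,
\begin{equation*}
\|\mathcal{T}(\gamma, \phi, r)\|_{\mathcal{W}^{1, p}[-R, T]}
= \biggl( \int_0^T \bigl| f\bigl( (\gamma + \bar{\phi})(t), (\gamma + \bar{\phi})(t - r) \bigr) \bigr|^p \, \mathrm{d}t \biggr)^{\frac{1}{p}}
\le K \, T^{\frac{1}{p}}.
\end{equation*}
If $K = 0$ the bound is trivially below $\delta$ for every $T$; otherwise any $T$ with $0 < T < (\delta / K)^p$ forces $K \, T^{1/p} < \delta$. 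Either way the strict inequality holds for all admissible $(\gamma, \phi, r)$ at once, which is precisely the asserted well-definedness.

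I do not expect a serious obstacle here: the estimate is elementary and the only point requiring attention is the uniformity of the admissible threshold for $T$. That uniformity is secured by the observation that the controlling constant $K$ is determined solely through $M = \delta + M_1$, and is therefore independent of the particular $\phi \in B$ and of $r \in [0, R]$.
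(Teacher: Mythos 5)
Your proposal is correct and follows essentially the same route as the paper: bound $\|\gamma + \bar{\phi}\|_{C[-R,T]}$ uniformly using the boundedness of $B$, use boundedness of the continuous $f$ on a bounded set to get a uniform constant, and choose $0 < T < (\delta/K)^p$ so that $\|\mathcal{T}(\gamma, \phi, r)\|_{\mathcal{W}^{1,p}[-R,T]} \le K T^{1/p} < \delta$. Your extra care (checking $R_0\mathcal{T} = 0$ and membership in $\mathcal{W}^{1,p}$, and treating the degenerate case $K = 0$) only adds detail the paper leaves implicit.
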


\begin{proof}
Let $y \in \bar{\varGamma}(\delta)$.
Then for all $(\phi, r) \in B \times [0, R]$,
	\begin{align*}
		\sup_{t \in [0, T]} \bigl\| \bigl( (y + \bar{\phi})(t), (y + \bar{\phi})(t - r) \bigr) \bigr\|
		&= \sup_{t \in [0, T]} (|(y + \bar{\phi})(t)| + |(y + \bar{\phi})(t - r)|) \\
		&\le 2(\|y\|_{C[-R, T]} + \|\phi\|_{C[-R, 0]}).
	\end{align*}
Since $f$ is bounded on any bounded set of $\mathbb{R}^N \times \mathbb{R}^N$, there is $M > 0$ such that
	\begin{equation*}
		\sup_{t \in [0, T]} \bigl| f \bigl( (y + \bar{\phi})(t), (y + \bar{\phi})(t - r) \bigr) \bigr| \le M
	\end{equation*}
for all $(\phi, r) \in B \times [0, R]$.
By choosing $0 < T < (\delta/M)^p$,
	\begin{equation*}
		\|\mathcal{T}(y, \phi, r)\|_{\mathcal{W}^{1, p}[-R, T]}
		\le \left( \int_0^T M^p \mspace{2mu} \mathrm{d}t \right)^{\frac{1}{p}}
		= MT^{\frac{1}{p}}
		< \delta
	\end{equation*}
holds for all such $(\phi, r)$.
This shows the conclusion.
\end{proof}

\begin{lemma}\label{lem:uniform contraction, single const delay}
Let $1 \le p < \infty$, $B \subset C([-R, 0], \mathbb{R}^N)$ be a bounded set, and $\delta > 0$.
If $f$ is locally Lipschitz continuous, then for all sufficiently small $T > 0$, the family of maps
	\begin{equation*}
		\mathcal{T}(\cdot, \phi, r) \colon \bar{\varGamma}(\delta) \to \varGamma_{1, p}(\delta),
	\end{equation*}
where $(\phi, r) \in B \times [0, R]$, is a well-defined uniform contraction.
\end{lemma}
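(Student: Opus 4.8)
The plan is to combine the well-definedness already established in the preceding lemma with a contraction estimate read off from the integral representation. First I would fix the data independently of $T$. Since $B$ is bounded in $C([-R, 0], \mathbb{R}^N)$ and every $y \in \bar{\varGamma}(\delta)$ satisfies $\|y\|_{C[-R, T]} \le \delta$, all arguments $\bigl((y + \bar{\phi})(t), (y + \bar{\phi})(t - r)\bigr)$ that occur, for $(\phi, r) \in B \times [0, R]$, $y \in \bar{\varGamma}(\delta)$ and $t \in [0, T]$, lie in one fixed bounded set $K \subset \mathbb{R}^N \times \mathbb{R}^N$ whose size depends only on the bound of $B$ and on $\delta$. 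The local Lipschitz continuity of $f$ then gives a single Lipschitz constant $L = L(B, \delta)$ on $K$, and this single constant is precisely the source of the uniformity in the parameter $(\phi, r)$.

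Next I would subtract two images. For $y_1, y_2 \in \bar{\varGamma}(\delta)$ both $\mathcal{T}(y_i, \phi, r)$ vanish on $[-R, 0]$, so their difference also has $R_0(\cdot) = 0$ and on $[0, T]$ equals
\[
\mathcal{T}(y_1, \phi, r)(t) - \mathcal{T}(y_2, \phi, r)(t)
= \int_0^t \Bigl[ f\bigl((y_1 + \bar{\phi})(s), (y_1 + \bar{\phi})(s - r)\bigr) - f\bigl((y_2 + \bar{\phi})(s), (y_2 + \bar{\phi})(s - r)\bigr) \Bigr] \, \mathrm{d}s.
\]
Applying the Lipschitz bound with the product norm $\|(x_1, x_2)\| = |x_1| + |x_2|$, the integrand is dominated pointwise by $L\bigl(|y_1(s) - y_2(s)| + |y_1(s - r) - y_2(s - r)|\bigr) \le 2L\|y_1 - y_2\|_{C[-R, T]}$ (note $s - r \in [-R, T]$ throughout). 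Integrating and taking the supremum over $t \in [0, T]$ yields
\[
\|\mathcal{T}(y_1, \phi, r) - \mathcal{T}(y_2, \phi, r)\|_{C[-R, T]} \le 2LT \, \|y_1 - y_2\|_{C[-R, T]},
\]
uniformly in $(\phi, r) \in B \times [0, R]$.

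Finally I would shrink $T$. Choosing $T$ small enough that the preceding well-definedness lemma applies (so each $\mathcal{T}(\cdot, \phi, r)$ sends $\bar{\varGamma}(\delta)$ into $\varGamma_{1, p}(\delta)$), that $T \le 1$, and that $2LT < 1$, each map becomes a genuine self-map of the complete metric space $\bigl(\bar{\varGamma}(\delta), \|\cdot\|_{C[-R, T]}\bigr)$ with contraction constant $2LT$ independent of $(\phi, r)$, which is the asserted uniform contraction. The step requiring the most care is the reconciliation of the two metrics: the domain $\bar{\varGamma}(\delta)$ carries the supremum metric while the codomain $\varGamma_{1, p}(\delta)$ carries the stronger $\mathcal{W}^{1, p}$-metric. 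The contraction should be measured in the supremum metric, and the inclusion $\varGamma_{1, p}(\delta) \subset \bar{\varGamma}(\delta)$ — valid once $T \le 1$, since $\|\gamma\|_{C[-R, T]} \le T^{1/q}\|\gamma\|_{\mathcal{W}^{1, p}[-R, T]}$ for functions with $R_0\gamma = 0$, where $q$ is the Hölder conjugate of $p$ — is what lets $\mathcal{T}(\cdot, \phi, r)$ be interpreted as such a self-map. The fixed point then automatically lands in $\varGamma_{1, p}(\delta)$, so the smoothing effect of $\mathcal{T}$ places the solution in the Sobolev-type history space. I would also take care to select $K$, and hence $L$, before $T$ and independently of it, since otherwise the uniformity across $(\phi, r)$ and the smallness condition $2LT < 1$ would become circular.
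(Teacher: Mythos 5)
Your proposal is correct and takes essentially the same route as the paper: a single Lipschitz constant $L = L(B, \delta)$ fixed on a bounded set of $\mathbb{R}^N \times \mathbb{R}^N$ chosen before and independently of $T$, the pointwise bound $2L\|y_1 - y_2\|_{C[-R, T]}$ on the integrand difference, and a smallness condition on $T$. The only divergence is bookkeeping: the paper integrates the $p$-th power of that pointwise bound to record the contraction in the mixed form $\|\mathcal{T}(y_1, \phi, r) - \mathcal{T}(y_2, \phi, r)\|_{\mathcal{W}^{1, p}[-R, T]} \le 2LT^{\frac{1}{p}}\|y_1 - y_2\|_{C[-R, T]}$ with $0 < T < 1/(2L)^p$, which matches the lemma's codomain metric literally, whereas your sup-to-sup constant $2LT$ together with the embedding $\|\gamma\|_{C[-R, T]} \le T^{\frac{1}{q}}\|\gamma\|_{\mathcal{W}^{1, p}[-R, T]}$ for $\gamma$ vanishing on $[-R, 0]$ recovers the same conclusion, since your pointwise estimate yields the paper's mixed-norm inequality in one line.
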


\begin{proof}
The well-definedness follows by the preceding lemma.
Since $f$ is Lipschitz continuous on any bounded set of $\mathbb{R}^N \times \mathbb{R}^N$, there is $L > 0$ such that
	\begin{align*}
		&\bigl| f \bigl( (y_1 + \bar{\phi})(t), (y_1 + \bar{\phi})(t - r) \bigr)
		- f \bigl( (y_2 + \bar{\phi})(t), (y_2 + \bar{\phi})(t - r) \bigr) \bigr| \\
		&\le L (|(y_1 - y_2)(t)| + |(y_1 - y_2)(t - r)|) \\
		&\le 2L \|y_1 - y_2\|_{C[-R, T]}
	\end{align*}
for all $y_1, y_2 \in \bar{\varGamma}(\delta)$, $\phi \in B$, and $r \in [0, R]$.
This implies that we have
	\begin{align*}
		\|\mathcal{T}(y_1, \phi, r) - \mathcal{T}(y_2, \phi, r)\|_{\mathcal{W}^{1, p}[-R, T]}
		&\le \left( \int_0^T (2L \|y_1 - y_2\|_{C[-R, T]})^p \mspace{2mu} \mathrm{d}t \right)^{\frac{1}{p}} \\
		&\le 2LT^{\frac{1}{p}} \cdot \|y_1 - y_2\|_{C[-R, T]}
	\end{align*}
for all such $(y_1, \phi, r)$ and $(y_2, \phi, r)$.
Therefore, the family of maps becomes a well-defined uniform contraction by choosing sufficiently small $0 < T < 1/(2L)^p$.
\end{proof}

\begin{remark}
The uniform contraction means that there is $0 < c < 1$ such that
for all $(\phi, r) \in B \times [0, R]$ and $y_1, y_2 \in \bar{\varGamma}(\delta)$,
	\begin{equation*}
		\|\mathcal{T}(y_1, \phi, r) - \mathcal{T}(y_2, \phi, r)\|_{\mathcal{W}^{1, p}[-R, T]}
		\le c \cdot \|y_1 - y_2\|_{C[-R, T]}
	\end{equation*}
holds.
Therefore, the families of maps
	\begin{align*}
		\mathcal{T}(\cdot, \phi, r) &\colon \bar{\varGamma}(\delta) \to \varGamma(\delta), \\
		\mathcal{T}(\cdot, \phi, r) &\colon \bar{\varGamma}_{1, p}(\delta) \to \varGamma_{1, p}(\delta),
	\end{align*}
where $(\phi, r) \in B \times [0, R]$, are also uniform contractions.
We note that the domains of the two operators correspond to different $T$.
\end{remark}

\begin{proposition}\label{prop:unique existence, single const delay}
Let $B \subset C([-R, 0], \mathbb{R}^N)$ be a bounded set.
If $f$ is locally Lipschitz continuous, then there exists $T > 0$ such that
for every $(\phi, r) \in B \times [0, R]$, IVP~\eqref{eq:IVP, single const delay II} has the unique solution
$x \colon [-R, T] \to \mathbb{R}^N$.
\end{proposition}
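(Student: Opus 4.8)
The plan is to fix $\delta > 0$ and solve the fixed point problem for $\mathcal{T}(\cdot, \phi, r)$ uniformly in $(\phi, r) \in B \times [0, R]$ via the contraction mapping principle. First I would fix any $\delta > 0$ and apply Lemma~\ref{lem:uniform contraction, single const delay} together with the remark following it: there is $T > 0$, independent of $(\phi, r) \in B \times [0, R]$, such that the map $\mathcal{T}(\cdot, \phi, r) \colon \bar{\varGamma}(\delta) \to \varGamma(\delta)$ is a uniform contraction for the metric induced by $\|\cdot\|_{C[-R, T]}$. The metric space $\bar{\varGamma}(\delta)$ is complete, being the intersection of the closed linear subspace $\{\mspace{2mu}\gamma \in C([-R, T], \mathbb{R}^N) : R_0\gamma = 0\mspace{2mu}\}$ with a closed ball in the Banach space $C([-R, T], \mathbb{R}^N)$.

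Next I would apply the contraction mapping principle for each fixed $(\phi, r)$, obtaining a unique fixed point $y = y(\phi, r) \in \bar{\varGamma}(\delta)$; it in fact lies in $\varGamma(\delta)$ because $\mathcal{T}(\cdot, \phi, r)$ takes values in $\varGamma(\delta)$. Setting $x := y + \bar{\phi}$ and invoking the equivalence established above—that $y$ is a fixed point of $\mathcal{T}(\cdot, \phi, r)$ if and only if $x = y + \bar{\phi}$ solves \eqref{eq:IVP, single const delay II}—I obtain a solution $x \colon [-R, T] \to \mathbb{R}^N$. This yields existence on an interval whose length $T$ is uniform over $B \times [0, R]$.

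The step I expect to require the most care is uniqueness among \emph{all} solutions on $[-R, T]$, not merely those whose associated $y$ lies in $\bar{\varGamma}(\delta)$. Given any solution $x$, the function $y := x - \bar{\phi}$ is continuous, satisfies $R_0 y = 0$, and is a fixed point of $\mathcal{T}(\cdot, \phi, r)$, so the task is to verify $y \in \bar{\varGamma}(\delta)$ and then invoke uniqueness of the fixed point. For this I would use the a priori bound underlying the well-definedness lemma: with $M > 0$ a bound for $|f|$ on the relevant bounded set (available because $B$ is bounded), one has $|y(t)| = \bigl| \int_0^t f((y + \bar{\phi})(s), (y + \bar{\phi})(s - r)) \mspace{2mu} \mathrm{d}s \bigr| \le Mt$ as long as $\|y\|_{C[-R, t]} \le \delta$. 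A connectedness argument applied to $t^* := \sup\{\mspace{2mu} t \in [0, T] : \|y\|_{C[-R, t]} \le \delta \mspace{2mu}\}$ then shows, after shrinking $T$ so that $MT < \delta$ (compatible with the constraints in Lemma~\ref{lem:uniform contraction, single const delay}), that $\|y\|_{C[-R, T]} \le \delta$. Hence $y \in \bar{\varGamma}(\delta)$ and $y = y(\phi, r)$, so $x = y(\phi, r) + \bar{\phi}$ is the unique solution. I note that uniqueness could instead be quoted from the standard theory of RFDEs in \cite{Hale--Lunel 1993}, but I prefer the self-contained a priori estimate to stay within the fixed-point framework used throughout.
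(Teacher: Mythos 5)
Your proposal is correct and follows essentially the same route as the paper: Lemma~\ref{lem:uniform contraction, single const delay} (together with the remark that $\mathcal{T}(\cdot, \phi, r)$ also maps $\bar{\varGamma}(\delta)$ into $\varGamma(\delta)$ as a uniform contraction in the supremum metric), the Banach fixed point theorem on the complete metric space $\bar{\varGamma}(\delta)$, and the a priori bound $MT \le \delta$ forcing $y = x - \bar{\phi}$ into $\bar{\varGamma}(\delta)$ for \emph{every} solution $x$, whence uniqueness. The only difference is that you spell out the bootstrapping/connectedness argument behind this a priori bound, which the paper's Step~1 asserts in a single sentence.
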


\begin{proof}
Let $\delta > 0$ be given.

\textbf{Step 1.}
We choose $M, T > 0$ so that $MT \le \delta$ and
for all $(y, \phi) \in \bar{\varGamma}(\delta) \times B$ and $r \in [0, R]$,
	\begin{equation*}
		\sup_{t \in [0, T]} \bigl| f \bigl( (y + \bar{\phi})(t), (y + \bar{\phi})(t - r) \bigr) \bigr| \le M.
	\end{equation*}
Let $(\phi, r) \in B \times [0, R]$ be given.
Then for every solution $x \colon [-R, T] \to \mathbb{R}^N$ of IVP~\eqref{eq:IVP, single const delay II},
the function $y \colon [-R, T] \to \mathbb{R}^N$ defined by $y = x - \bar{\phi}$
necessarily belongs to $\bar{\varGamma}(\delta)$.

\textbf{Step 2.}
From the preceding lemma, there is sufficiently small $T > 0$ such that the family of maps
	\begin{equation*}
		\mathcal{T}(\cdot, \phi, r) \colon \bar{\varGamma}(\delta) \to \bar{\varGamma}(\delta),
	\end{equation*}
where $(\phi, r) \in B \times [0, R]$, is a uniform contraction.
Then the Banach fixed point theorem implies that for each $(\phi, r) \in B \times [0, R]$,
$\mathcal{T}(\cdot, \phi, r)$ has the unique fixed point $y(\cdot, \phi, r) \in \bar{\varGamma}(\delta)$
because $\bar{\varGamma}(\delta)$ is a complete metric space.
Then $x \colon [-R, T] \to \mathbb{R}^N$ defined by
	\begin{equation*}
		x := y(\cdot, \phi, r) + \bar{\phi}
	\end{equation*}
is a solution of IVP~\eqref{eq:IVP, single const delay II}.
The uniqueness follows by Step 1.
\end{proof}

\begin{remark}
Under the assumption of the local Lipschitz continuity of $f$,
IVP~\eqref{eq:IVP, single const delay II} has the unique maximal solution
	\begin{equation*}
		x(\cdot; \phi, r) \colon [-R, T_{\phi, r}) \to \mathbb{R}^N, \mspace{20mu} \text{where $0 < T_{\phi, r} \le \infty$},
	\end{equation*}
for every $(\phi, r) \in C([-R, 0], \mathbb{R}^N) \times [0, R]$.
\end{remark}

\subsubsection{$C^1$-smoothness with respect to delay}

Let $1 \le p < \infty$ and $T > 0$ be given.
The following notation will be used.

\begin{notation}
For each $(y, \phi) \in C[-R, T] \times C[-R, 0]$, $r \in [0, R]$, and $t \in [0, T]$, let
	\begin{equation*}
		\rho(y, \phi, r, t) := \bigl( (y + \bar{\phi})(t), (y + \bar{\phi})(t - r) \bigr).
	\end{equation*}
Then
	\begin{equation*}
		\rho(y_1, \phi_1, r, t) - \rho(y_2, \phi_2, r, t)
		= \rho(y_1 - y_2, \phi_1 - \phi_2, r, t)
	\end{equation*}
holds.
\end{notation}

\begin{lemma}
Let $(y, \phi) \in C([-R, T], \mathbb{R}^N) \times C([-R, 0], \mathbb{R}^N)$ and $r_0 \in [0, R]$ be fixed.
If $f$ is of class $C^1$, then for $r \in [0, R]$,
	\begin{equation*}
		\sup_{t \in [0, T]} \|Df(\rho(y, \phi, r, t)) - Df(\rho(y, \phi, r_0, t))\|
		\to 0
	\end{equation*}
as $r \to r_0$.
\end{lemma}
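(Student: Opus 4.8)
The plan is to reduce the statement to two soft facts: the uniform continuity of the continuous function $z := y + \bar{\phi}$ on the compact interval $[-R, T]$, and the uniform continuity of the continuous map $Df$ on a fixed compact subset of $\mathbb{R}^N \times \mathbb{R}^N$. First I would observe that $z = y + \bar{\phi}$ is continuous on $[-R, T]$: indeed $y \in C([-R, T], \mathbb{R}^N)$ and, since $\phi$ is continuous with $\bar{\phi}(0) = \phi(0)$, the static prolongation $\bar{\phi}$ is continuous on $[-R, +\infty)$. With this notation $\rho(y, \phi, r, t) = (z(t), z(t - r))$, and the key structural remark is that the two points appearing in the statement share their first coordinate $z(t)$; hence, in the product norm,
\[
\| \rho(y, \phi, r, t) - \rho(y, \phi, r_0, t) \| = |z(t - r) - z(t - r_0)|.
\]

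Next I would confine the arguments to a compact set. For $t \in [0, T]$ and $r, r_0 \in [0, R]$ one has $t - r, t - r_0 \in [-R, T]$, so every point $\rho(y, \phi, r, t)$ lies in the compact set $K := z([-R, T]) \times z([-R, T])$, where $z([-R, T])$ is compact because $z$ is continuous on the compact interval $[-R, T]$. This compactness is exactly what lets me upgrade the mere continuity of $Df$ (which is all that the $C^1$ hypothesis on $f$ guarantees) to \emph{uniform} continuity on $K$.

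The argument then proceeds by two successive uniform-continuity choices. Given $\ep > 0$, uniform continuity of $Df$ on $K$ yields $\eta > 0$ such that $P, P' \in K$ with $\|P - P'\| < \eta$ forces $\|Df(P) - Df(P')\| < \ep$. Uniform continuity of $z$ on $[-R, T]$ then yields $\delta > 0$ such that $|s - s'| < \delta$ implies $|z(s) - z(s')| < \eta$. For $|r - r_0| < \delta$ and any $t \in [0, T]$, the displayed identity together with $|(t - r) - (t - r_0)| = |r - r_0| < \delta$ gives $\|\rho(y, \phi, r, t) - \rho(y, \phi, r_0, t)\| < \eta$, hence the $Df$-difference is $< \ep$ uniformly in $t$; taking the supremum over $t \in [0, T]$ bounds it by $\ep$, which is precisely the claimed convergence as $r \to r_0$.

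I do not expect a serious obstacle here, since the statement is a soft compactness-and-uniform-continuity result. The only point requiring care is the order of the two choices: one must fix the compact set $K$ and extract $\eta$ from the uniform continuity of $Df$ \emph{before} invoking the uniform continuity of $z$, so that the resulting estimate is uniform in $t \in [0, T]$ and does not secretly depend on $t$ through the common first coordinate $z(t)$.
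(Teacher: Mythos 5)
Your proof is correct and follows essentially the same route as the paper: both arguments confine all points $\rho(y, \phi, r, t)$ to a fixed bounded (in your case compact) subset of $\mathbb{R}^N \times \mathbb{R}^N$, exploit the uniform continuity of $Df$ there, use the uniform continuity of $y + \bar{\phi}$ on $[-R, T]$, and rest on the same key identity $\|\rho(y, \phi, r, t) - \rho(y, \phi, r_0, t)\| = |(y + \bar{\phi})(t - r) - (y + \bar{\phi})(t - r_0)|$ coming from the shared first coordinate. Your use of the compact image $z([-R, T]) \times z([-R, T])$ in place of the paper's bounded ball is an inessential variant.
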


\begin{proof}
Since
	\begin{equation*}
		\|\rho(y, \phi, r, t)\| \le 2(\|y\|_{C[-R, T]} + \|\phi\|_{C[-R, 0]})
	\end{equation*}
holds for all $r \in [0, R]$ and $t \in [0, T]$, $\rho(y, \phi, r, t)$ is contained in some bounded set $B$ for all such $r, t$.

Let $\ep > 0$.
The uniform continuity of $Df$ on $B$ implies that
there is $\delta_1 > 0$ such that for all $(x_1, y_1), (x_2, y_2) \in B$,
	\begin{equation*}
		|x_1 - x_2| + |y_1 - y_2| < \delta_1 \imply \|Df(x_1, y_1) - Df(x_2, y_2)\| < \ep.
	\end{equation*}
By the uniform continuity of $y + \bar{\phi} \colon [-R, T] \to \mathbb{R}^N$, there is $\delta_2 > 0$ such that
$|r - r_0| < \delta_2$ implies
	\begin{equation*}
		\sup_{t \in [0, T]} |(y + \bar{\phi})(t - r) - (y + \bar{\phi})(t - r_0)| < \delta_1.
	\end{equation*}
In view of
	\begin{equation*}
		\|\rho(y, \phi, r, t) - \rho(y, \phi, r_0, t)\|
		= |(y + \bar{\phi})(t - r) - (y + \bar{\phi})(t - r_0)|,
	\end{equation*}
the above argument shows that $|r - r_0| < \delta_2$ implies
	\begin{equation*}
		\|Df(\rho(y, \phi, r, t)) - Df(\rho(y, \phi, r_0, t))\| < \ep
	\end{equation*}
for all $t \in [0, T]$.
\end{proof}

\begin{theorem}\label{thm:C^1-smoothness wrt delay}
Let $y \in \mathcal{W}^{1, p}([-R, T], \mathbb{R}^N)$ and $\phi \in \mathcal{W}^{1, p}([-R, 0], \mathbb{R}^N)$ be fixed.
If $f$ is of class $C^1$, then
	\begin{equation*}
		\mathcal{T}(y, \phi, \cdot) \colon [0, R] \to \mathcal{W}^{1, p}([-R, T], \mathbb{R}^N)
	\end{equation*}
is a continuously differentiable function whose derivative is given by
	\begin{align*}
		&\biggl( \frac{\partial}{\partial r} \mathcal{T}(y, \phi, r) \biggr)(t) \\
		&= B_{y, \phi, r}(t) \\
		&:=
		\begin{cases}
			0 & (t \in [-R, 0]), \\
			-\int_0^t
				D_2f \bigl( (y + \bar{\phi})(s), (y + \bar{\phi})(s - r) \bigr) (y + \bar{\phi})'(s - r)
			\mspace{2mu} \mathrm{d}s & (t \in [0, T])
		\end{cases}
	\end{align*}
in $\mathcal{W}^{1, p}([-R, T], \mathbb{R}^N)$.
\end{theorem}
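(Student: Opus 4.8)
The plan is to exploit that every function in sight---$\mathcal{T}(y,\phi,r)$, the candidate derivative $B_{y,\phi,r}$, and all the difference quotients formed from them---vanishes identically on $[-R,0]$, and in particular at $-R$. Consequently, writing $u := y + \bar{\phi}$, which lies in $\mathcal{W}^{1,p}([-R,T],\mathbb{R}^N)$ by the prolongation operator lemma, the $\mathcal{W}^{1,p}[-R,T]$-norm of any such function collapses to the $L^p[0,T]$-norm of its almost-everywhere $t$-derivative. Thus both assertions---differentiability of $r \mapsto \mathcal{T}(y,\phi,r)$ at a fixed $r_0 \in [0,R]$ and continuity of $r \mapsto B_{y,\phi,r}$---reduce to statements in $L^p[0,T]$ about the integrands $t \mapsto f(u(t),u(t-r))$ and $t \mapsto D_2 f(u(t),u(t-r))u'(t-r)$.

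For differentiability, I would fix $r_0$ and study the $L^p[0,T]$-norm, divided by $|r - r_0|$, of
\[
E(t,r) := f(u(t),u(t-r)) - f(u(t),u(t-r_0)) + (r-r_0)\,D_2 f(u(t),u(t-r_0))\,u'(t-r_0).
\]
Writing $\Delta(t) := u(t-r) - u(t-r_0)$ and applying the integral mean value theorem in the second slot of $f$, I get $f(u(t),u(t-r)) - f(u(t),u(t-r_0)) = A(t)\Delta(t)$ with $A(t) := \int_0^1 D_2 f(u(t), u(t-r_0)+\tau\Delta(t))\,\mathrm{d}\tau$. Setting $M(t):= D_2 f(u(t),u(t-r_0))$ and adding and subtracting $M(t)\Delta(t)$ splits the integrand as
\[
E(t,r) = [A(t)-M(t)]\Delta(t) + M(t)\bigl[\Delta(t) + (r-r_0)\,u'(t-r_0)\bigr].
\]

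The second summand is routine: $\|M(t)\|$ is bounded on $[0,T]$ since $u$ is continuous and $Df$ is continuous, so its contribution is controlled by $\frac{1}{|r-r_0|}\|\Delta(\cdot) + (r-r_0)u'(\cdot - r_0)\|_{L^p[0,T]}$, which tends to $0$ by the differentiability of translation in $L^p$ (Corollary~\ref{cor:differentiability of translation in L^p}) applied to $u' \in L^p$. The first summand is the genuine obstacle, and is where the two-argument structure of $f$ makes itself felt. Here I would bound its normalized $L^p$-norm by $\bigl(\sup_{t}\|A(t)-M(t)\|\bigr)\cdot\frac{1}{|r-r_0|}\|\Delta\|_{L^p[0,T]}$. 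The first factor tends to $0$: since $\Delta(t)\to0$ uniformly in $t$ by the uniform continuity of $u$ on $[-R,T]$ and $D_2 f$ is uniformly continuous on the bounded set containing the relevant arguments, $A(t)$ converges to $M(t)$ uniformly. The second factor stays bounded as $r\to r_0$; this is precisely the double-integral estimate for the translation of $L^p$-functions (Corollary~\ref{cor:double integral, small order, L^p}), which yields $\|\Delta\|_{L^p[0,T]} = O(|r-r_0|)$. The product therefore vanishes in the limit, giving differentiability with the claimed derivative.

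For the continuity of the derivative, I would estimate $\|B_{y,\phi,r} - B_{y,\phi,r_0}\|_{\mathcal{W}^{1,p}[-R,T]}$, again equal to the $L^p[0,T]$-norm of the difference of integrands $D_2 f(u(t),u(t-r))u'(t-r) - D_2 f(u(t),u(t-r_0))u'(t-r_0)$. Splitting this as $[D_2 f(u(t),u(t-r)) - D_2 f(u(t),u(t-r_0))]u'(t-r)$ plus $D_2 f(u(t),u(t-r_0))[u'(t-r) - u'(t-r_0)]$, the first piece is handled by the preceding lemma (which gives $\sup_t \|Df(\rho(y,\phi,r,t)) - Df(\rho(y,\phi,r_0,t))\| \to 0$, hence the same for the $D_2 f$-block, as $\|D_2 f(a,b) - D_2 f(a',b')\| \le \|Df(a,b) - Df(a',b')\|$) together with the uniform $L^p$-bound $\|u'(\cdot - r)\|_{L^p[0,T]} \le \|u'\|_{L^p[-R,T]}$, while the second piece is controlled by the boundedness of $\|D_2 f(u(t),u(t-r_0))\|$ and the continuity of translation in $L^p$ applied to $u' \in L^p$. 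Both pieces tend to $0$ as $r\to r_0$, completing the argument. The main difficulty throughout is the first summand in the differentiability step, where, in contrast to the special case treated earlier, one cannot reduce directly to the translation of a single absolutely continuous function and must instead combine the uniform continuity of $D_2 f$ with the double-integral estimate.
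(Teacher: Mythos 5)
Your proposal is correct and follows essentially the same route as the paper's proof: the same integral mean value theorem in the second slot of $f$ (your $A(t)$, $M(t)$ are the paper's $\int_0^1 L(u,t,r)\,\mathrm{d}u$ and $L(0,t,r)$), the same two-term decomposition with Corollary~\ref{cor:double integral, small order, L^p} controlling the $[A(t)-M(t)]\Delta(t)$ term and Corollary~\ref{cor:differentiability of translation in L^p} handling the remainder, and the same split plus continuity of translation in $L^p$ for the continuity of $r \mapsto B_{y,\phi,r}$. Your explicit observation that all functions vanish on $[-R,0]$, so the $\mathcal{W}^{1,p}$-norm reduces to the $L^p[0,T]$-norm of the derivative, is used implicitly in the paper's Step 1 and is a welcome clarification rather than a deviation.
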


\begin{proof}
\textbf{Step 1.}
Let $r_0 \in [0, R]$ be fixed.
For $y \in \mathcal{W}^{1, p}[-R, T]$ and $\phi \in \mathcal{W}^{1, p}[-R, 0]$, let
	\begin{align*}
		&L(u, t, r) \\
		&:= D_2f \Bigl( (y + \bar{\phi})(t), (y + \bar{\phi})(t - r_0) + u \bigl( (y + \bar{\phi})(t - r) - (y + \bar{\phi})(t - r_0)\bigr) \Bigr)
	\end{align*}
for each $(u, t, r) \in [0, 1] \times [0, T] \times [0, R]$.
We note
	\begin{align*}
		L(0, t, r)
		&:= D_2f \bigl( (y + \bar{\phi})(t), (y + \bar{\phi})(t - r_0) \bigr)
		= D_2f(\rho(y, \phi, r_0, t)), \\
		L(1, t, r)
		&:= D_2f \bigl( (y + \bar{\phi})(t), (y + \bar{\phi})(t - r) \bigr)
		= D_2f(\rho(y, \phi, r, t)).
	\end{align*}
Then
	\begin{align*}
		&f \bigl( (y + \bar{\phi})(t), (y + \bar{\phi})(t - r) \bigr) - f \bigl( (y + \bar{\phi})(t), (y + \bar{\phi})(t - r_0) \bigr) \\
		&= \int_0^1 L(u, t, r) \mspace{2mu} \mathrm{d}u \cdot \bigl( (y + \bar{\phi})(t - r) - (y + \bar{\phi})(t - r_0)\bigr)
	\end{align*}
holds for all $(t, r) \in [0, T] \times [0, R]$.
Therefore, we have
	\begin{align*}
		&\left\|
			\frac{1}{r - r_0} (\mathcal{T}(y, \phi, r) - \mathcal{T}(y, \phi, r_0)) - B_{y, \phi, r_0}
		\right\|_{\mathcal{W}^{1, p}[-R, T]} \\
		&= \frac{1}{|r - r_0|} \biggl( \int_0^T
			\biggl|
				\int_0^1 L(u, t, r) \mspace{2mu} \mathrm{d}u \cdot \bigl( (y + \bar{\phi})(t - r) - (y + \bar{\phi})(t - r_0)\bigr) \\
				&\mspace{40mu} + (r - r_0)L(0, t, r)(y + \bar{\phi})'(t - r_0)
			\biggr|^p
		\mspace{2mu} \mathrm{d}t \biggr)^{\frac{1}{p}} \\
		&=: \frac{1}{|r - r_0|}\biggl( \int_0^T g(t, r)^p \mspace{2mu} \mathrm{d}t \biggr)^{\frac{1}{p}}
	\end{align*}
for all $r \in [0, R]$.

\textbf{Step 2.}
For all $(t, r) \in [0, T] \times [0, R]$,
	\begin{align*}
		&g(t, r) \\
		&\le
		\int_0^1 \|L(u, t, r) - L(0, t, r)\| \mspace{2mu} \mathrm{d}u
		\cdot |(y + \bar{\phi})(t - r) - (y + \bar{\phi})(t - r_0)| \\
			&\mspace{40mu} + \|L(0, t, r)\|
			\cdot |(y + \bar{\phi})(t - r) - (y + \bar{\phi})(t - r_0) + (r - r_0)(y + \bar{\phi})'(t - r_0)| \\
		&\le
		\sup_{(u, t) \in [0, 1] \times [0, T]} \|L(u, t, r) - L(0, t, r)\|
		\cdot |(y + \bar{\phi})(t - r) - (y + \bar{\phi})(t - r_0)| \\
			&\mspace{40mu} + \sup_{t \in [0, T]} \|L(0, t, r)\|
			\cdot |(y + \bar{\phi})(t - r) - (y + \bar{\phi})(t - r_0) + (r - r_0)(y + \bar{\phi})'(t - r_0)| \\
		&=: g_1(t, r) + g_2(t, r).
	\end{align*}
Therefore,
	\begin{align*}
		&\frac{1}{|r - r_0|}\biggl( \int_0^T g(t, r)^p \mspace{2mu} \mathrm{d}t \biggr)^{\frac{1}{p}} \\
		&\le \frac{1}{|r - r_0|} \biggl( \int_0^T g_1(t, r)^p \mspace{2mu} \mathrm{d}t \biggr)^{\frac{1}{p}}
		+ \frac{1}{|r - r_0|} \biggl( \int_0^T g_2(t, r)^p \mspace{2mu} \mathrm{d}t \biggr)^{\frac{1}{p}}
	\end{align*}
by the Minkowski inequality.

\textbf{Step 3.}
Let $\ep > 0$.
In the same way as the preceding lemma,
there is $\delta > 0$ such that for all $r \in [0, R]$, $|r - r_0| < \delta$ implies
	\begin{equation*}
		\sup_{(u, t) \in [0, 1] \times [0, T]} \|L(u, t, r) - L(0, t, r)\| \le \ep
	\end{equation*}
because
	\begin{equation*}
		\Bigl\| \Bigl( 0, u \bigl( (y + \bar{\phi})(t - r) - (y + \bar{\phi})(t - r_0)\bigr) \Bigr) \Bigr\|
		\le |(y + \bar{\phi})(t - r) - (y + \bar{\phi})(t - r_0)|.
	\end{equation*}
Therefore, for such $r$,
	\begin{equation*}
		g_1(t, r)
		\le \ep |(y + \bar{\phi})(t - r) - (y + \bar{\phi})(t - r_0)|
		= \ep \cdot \biggl| \int_{-r_0}^{-r} (y + \bar{\phi})'(t + \theta) \mspace{2mu} \mathrm{d}\theta \biggr|,
	\end{equation*}
and we have
	\begin{align*}
		\frac{1}{|r - r_0|} \biggl( \int_0^T g_1(t, r)^p \mspace{2mu} \mathrm{d}t \biggr)^{\frac{1}{p}}
		&\le \frac{\ep}{|r - r_0|} \biggl( \int_0^T
			\biggl| \int_{-r_0}^{-r} |(y + \bar{\phi})'(t + \theta)| \mspace{2mu} \mathrm{d}\theta \biggr|^p
		\mspace{2mu} \mathrm{d}t \biggr)^{\frac{1}{p}} \\
		&\le \ep \cdot \|y + \bar{\phi}\|_{\mathcal{W}^{1, p}[-R, T]},
	\end{align*}
where the last inequality follows from Corollary~\ref{cor:double integral, small order, L^p}.

\textbf{Step 4.}
For all $r \in [0, R]$, we have
	\begin{align*}
		&\frac{1}{|r - r_0|} \biggl( \int_0^T g_2(t, r)^p \mspace{2mu} \mathrm{d}t \biggr)^{\frac{1}{p}} \\
		&\le \sup_{t \in [0, T]} \|L(0, t, r)\| \\
		&\mspace{40mu} \cdot \frac{1}{|r - r_0|} \biggl( \int_0^T
			|(y + \bar{\phi})(t - r) - (y + \bar{\phi})(t - r_0) + (r - r_0)(y + \bar{\phi})'(t - r_0)|^p
		\mspace{2mu} \mathrm{d}t \biggr)^{\frac{1}{p}},
	\end{align*}
where the last term converges to $0$ by the differentiability of translation in $L^p$
(Corollary~\ref{cor:differentiability of translation in L^p}).

\textbf{Step 5.}
By the above steps, we have
	\begin{align*}
		\left\|
			\frac{1}{r - r_0} (\mathcal{T}(y, \phi, r) - \mathcal{T}(y, \phi, r_0)) - B_{y, \phi, r_0}
		\right\|_{\mathcal{W}^{1, p}[-R, T]}
		\to 0
	\end{align*}
as $r \to r_0$, which shows the Fr\'{e}chet differentiability.
The continuity of the derivative also holds because
	\begin{align*}
		&\|B_{y, \phi, r} - B_{y, \phi, r_0}\|_{\mathcal{W}^{1, p}[-R, T]} \\
		&=
		\biggl( \int_0^T
			|L(1, t, r)(y + \bar{\phi})'(t - r) - L(0, t, r)(y + \bar{\phi})'(t - r_0)|^p
		\mspace{2mu} \mathrm{d}t \biggr)^{\frac{1}{p}} \\
		&\le
		\biggl( \int_0^T
			\|L(1, t, r) - L(0, t, r)\|^p |(y + \bar{\phi})'(t - r)|^p
		\mspace{2mu} \mathrm{d}t \biggr)^{\frac{1}{p}} \\
		&\mspace{40mu} +
		\biggl( \int_0^T
			\|L(0, t, r)\|^p |(y + \bar{\phi})'(t - r) - (y + \bar{\phi})'(t - r_0)|^p
		\mspace{2mu} \mathrm{d}t \biggr)^{\frac{1}{p}} \\
		&\le \sup_{t \in [0, T]} \|L(1, t, r) - L(0, t, r)\| \cdot \|y + \bar{\phi}\|_{\mathcal{W}^{1, p}[-R, T]} \\
		&\mspace{40mu} + \sup_{t \in [0, T]} \|L(0, t, r)\|
		\biggl( \int_0^T
			|(y + \bar{\phi})'(t - r) - (y + \bar{\phi})'(t - r_0)|^p
		\mspace{2mu} \mathrm{d}t \biggr)^{\frac{1}{p}}.
	\end{align*}
This shows that $\|B_{y, \phi, r} - B_{y, \phi, r_0}\|_{\mathcal{W}^{1, p}[-R, T]}$ converges to $0$ as $r \to r_0$
by the preceding lemma and by the continuity of translation in $L^p$.
\end{proof}

\subsubsection{$C^1$-smoothness with respect to prolongation and history}

Let $1 \le p < \infty$ and $T > 0$ be given.

\begin{lemma}
Let $(y_0, \phi_0) \in C([-R, T], \mathbb{R}^N) \times C([-R, 0], \mathbb{R}^N)$ be fixed.
If $f$ is of class $C^1$, then for $(y, \phi) \in C([-R, T], \mathbb{R}^N) \times C([-R, 0], \mathbb{R}^N)$,
	\begin{equation*}
		\sup_{t \in [0, T]} \|Df(\rho(y, \phi, r, t)) - Df(\rho(y_0, \phi_0, r, t))\|
		\to 0
	\end{equation*}
as $(y, \phi) \to (y_0, \phi_0)$ uniformly in $r \in [0, R]$.
\end{lemma}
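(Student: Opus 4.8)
The plan is to reduce the claim to the uniform continuity of $Df$ on a fixed bounded set, exactly as in the lemma preceding Theorem~\ref{thm:C^1-smoothness wrt delay}, but now tracking the dependence on the pair $(y, \phi)$ rather than on $r$. The decisive point is that $\rho$ depends affinely on $(y, \phi)$: by the identity recorded in the Notation and the linearity of the static prolongation, one has $\rho(y, \phi, r, t) - \rho(y_0, \phi_0, r, t) = \rho(y - y_0, \phi - \phi_0, r, t)$. This lets me control the distance between the two arguments by the supremum norms of $y - y_0$ and $\phi - \phi_0$ alone, with no reference to $r$ or $t$.

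First I would record the estimate
\[
	\|\rho(y, \phi, r, t) - \rho(y_0, \phi_0, r, t)\|
	\le 2 \bigl( \|y - y_0\|_{C[-R, T]} + \|\phi - \phi_0\|_{C[-R, 0]} \bigr),
\]
valid for all $t \in [0, T]$ and $r \in [0, R]$, which uses that the static prolongation preserves the supremum norm, so that $\|\overline{\phi - \phi_0}\|_{C[-R, T]} = \|\phi - \phi_0\|_{C[-R, 0]}$. This single bound already shows that the arguments converge uniformly in $(t, r)$ as $(y, \phi) \to (y_0, \phi_0)$.

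Next I would confine all arguments to one bounded set. Since $\|\rho(y_0, \phi_0, r, t)\| \le 2(\|y_0\|_{C[-R, T]} + \|\phi_0\|_{C[-R, 0]})$, restricting attention to those $(y, \phi)$ within unit distance of $(y_0, \phi_0)$ keeps every $\rho(y, \phi, r, t)$ inside a fixed bounded set $B \subset \mathbb{R}^N \times \mathbb{R}^N$. On the compact set $\cls B$ the continuous map $Df$ is uniformly continuous, so given $\ep > 0$ there is $\delta > 0$ with $\|Df(z) - Df(z')\| < \ep$ whenever $z, z' \in B$ and $\|z - z'\| < \delta$. Choosing $(y, \phi)$ close enough to $(y_0, \phi_0)$ that the right-hand side of the displayed estimate is below $\delta$ then forces $\|Df(\rho(y, \phi, r, t)) - Df(\rho(y_0, \phi_0, r, t))\| < \ep$ simultaneously for all $t \in [0, T]$ and all $r \in [0, R]$; taking the supremum over $t$ yields the conclusion uniformly in $r$.

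I do not expect a genuine obstacle. In contrast with the lemma for the $r$-variable, where the uniform continuity of the orbit $s \mapsto (y + \bar{\phi})(t - s)$ was needed to convert closeness of $r$ into closeness of the arguments, here the affine dependence on $(y, \phi)$ supplies that closeness directly and uniformly in $r$ and $t$; the only mild care required is to fix the bounded set $B$ before invoking uniform continuity of $Df$, which is why I restrict to a unit neighborhood of $(y_0, \phi_0)$ at the outset.
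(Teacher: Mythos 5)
Your proposal is correct and follows essentially the same route as the paper's proof: the affine identity $\rho(y, \phi, r, t) - \rho(y_0, \phi_0, r, t) = \rho(y - y_0, \phi - \phi_0, r, t)$ gives the bound $2(\|y - y_0\|_{C[-R, T]} + \|\phi - \phi_0\|_{C[-R, 0]})$ uniformly in $(t, r)$, after which one confines all arguments to a fixed bounded set $B$ and invokes the uniform continuity of $Df$ on $B$, exactly as the paper does. The only cosmetic difference is that you make the restriction to a unit neighborhood of $(y_0, \phi_0)$ explicit where the paper writes ``we may assume,'' which if anything is a slight gain in precision.
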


\begin{proof}
We may assume that there is a bounded set $B \subset \mathbb{R}^N \times \mathbb{R}^N$ such that
	\begin{equation*}
		\rho(y, \phi, r, t) \in B
	\end{equation*}
holds for all $(y, \phi) \in C[-R, T] \times C[-R, 0]$, $r \in [0, R]$, and $t \in [0, T]$ because
	\begin{align*}
		\|\rho(y, \phi, r, t)\|
		&\le \|\rho(y, \phi, r, t) - \rho(y_0, \phi_0, r, t)\| + \|\rho(y_0, \phi_0, r, t)\| \\
		&= \|\rho(y - y_0, \phi - \phi_0, r, t)\| + \|\rho(y_0, \phi_0, r, t)\| \\
		&\le 2(\|y - y_0\|_{C[-R, T]} + \|\phi - \phi_0\|_{C[-R, 0]} + \|y_0\|_{C[-R, T]} + \|\phi_0\|_{C[-R, 0]}).
	\end{align*}

Let $\ep > 0$.
The uniform continuity of $Df$ on $B$ implies that
there is $\delta > 0$ such that for all $(x_1, y_1), (x_2, y_2) \in B$,
	\begin{equation*}
		|x_1 - x_2| + |y_1 - y_2| < \delta \imply \|Df(x_1, y_1) - Df(x_2, y_2)\| < \ep.
	\end{equation*}
Therefore,
	\begin{equation*}
		\|y - y_0\|_{C[-R, T]} + \|\phi - \phi_0\|_{C[-R, 0]} < \frac{\delta}{2}
	\end{equation*}
implies
	\begin{equation*}
		\|Df(\rho(y, \phi, r, t)) - Df(\rho(y_0, \phi_0, r, t))\| < \ep
	\end{equation*}
for all $t \in [0, T]$ uniformly in $r$.
\end{proof}

\begin{theorem}\label{thm:C^1-smoothness wrt prolongation and history}
Let $r \in [0, R]$ be fixed.
If $f$ is of class $C^1$, then
	\begin{equation*}
		\mathcal{T}(\cdot, \cdot, r) \colon C([-R, T], \mathbb{R}^N) \times C([-R, 0], \mathbb{R}^N) \to \mathcal{W}^{1, p}([-R, T], \mathbb{R}^N)
	\end{equation*}
is continuously Fr\'{e}chet differentiable.
The Fr\'{e}chet derivative is given by
	\begin{equation*}
		D_{y, \phi}\mathcal{T}(y, \phi, r) = A_{y, \phi, r},
	\end{equation*}
where
	\begin{align*}
		&[A_{y, \phi, r}(\eta, \chi)](t) \\
		&=
		\begin{cases}
			0 & (t \in [-R, 0]), \\
			\int_0^t
				Df\bigl(
					(y + \bar{\phi})(s), (y + \bar{\phi})(s - r) \bigr) \bigl( (\eta + \bar{\chi})(s), (\eta + \bar{\chi})(s - r)
				\bigr)
			\mspace{2mu} \mathrm{d}s & (t \in [0, T])
		\end{cases}
	\end{align*}
for all $(\eta, \chi) \in C([-R, T], \mathbb{R}^N) \times C([-R, 0], \mathbb{R}^N)$.
In particular,
	\begin{align*}
		&\|A_{y, \phi, r} - A_{y_0, \phi_0, r}\| \\
		&\le
		2T^{\frac{1}{p}} \sup_{t \in [0, T]}
		\bigl\| Df \bigl( (y + \bar{\phi})(t), (y + \bar{\phi})(y - r) \bigr)
		- Df \bigl( (y_0 + \bar{\phi}_0)(t), (y_0 + \bar{\phi}_0)(t - r) \bigr) \bigr\|
	\end{align*}
holds, where $\|\cdot\|$ denotes the corresponding operator norm.
\end{theorem}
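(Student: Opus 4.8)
The plan is to confirm the explicit formula for the Fréchet derivative in three stages: linearity and boundedness of the candidate operator $A_{y,\phi,r}$, the differentiability estimate, and the continuity of $(y,\phi) \mapsto A_{y,\phi,r}$. Throughout I would exploit the fact that every function produced by $\mathcal{T}$ or by $A_{y,\phi,r}$ vanishes on $[-R,0]$, so that its $\mathcal{W}^{1,p}$-norm collapses to the $L^p$-norm of its derivative on $[0,T]$; this is what lets $Df$ enter the integrand directly and keeps the estimates elementary.

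First I would check that $A_{y,\phi,r}$ is a bounded linear operator. Linearity in $(\eta,\chi)$ is immediate from the formula. For boundedness, the derivative of $A_{y,\phi,r}(\eta,\chi)$ on $[0,T]$ is $Df(\rho(y,\phi,r,t))\rho(\eta,\chi,r,t)$, whence
\[
\|A_{y,\phi,r}(\eta,\chi)\|_{\mathcal{W}^{1,p}[-R,T]}
= \biggl( \int_0^T |Df(\rho(y,\phi,r,t))\rho(\eta,\chi,r,t)|^p \, \mathrm{d}t \biggr)^{\frac{1}{p}}
\le 2T^{\frac{1}{p}} \sup_{t \in [0,T]} \|Df(\rho(y,\phi,r,t))\| \cdot (\|\eta\|_{C[-R,T]} + \|\chi\|_{C[-R,0]}),
\]
using the bound $\|\rho(\eta,\chi,r,t)\| \le 2(\|\eta\|_{C[-R,T]} + \|\chi\|_{C[-R,0]})$.

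Next, for the differentiability, I would use the identity $\rho(y+\eta,\phi+\chi,r,t) - \rho(y,\phi,r,t) = \rho(\eta,\chi,r,t)$ together with the integral mean value representation exactly as in Step 1 of Theorem~\ref{thm:C^1-smoothness wrt delay}. The remainder $\mathcal{T}(y+\eta,\phi+\chi,r) - \mathcal{T}(y,\phi,r) - A_{y,\phi,r}(\eta,\chi)$ again vanishes on $[-R,0]$, so its $\mathcal{W}^{1,p}$-norm equals the $L^p$-norm of the integrand
\[
\int_0^1 \bigl[ Df(\rho(y,\phi,r,t) + u\rho(\eta,\chi,r,t)) - Df(\rho(y,\phi,r,t)) \bigr] \, \mathrm{d}u \cdot \rho(\eta,\chi,r,t).
\]
I would bound this by $\sup_{(u,t) \in [0,1] \times [0,T]} \|Df(\rho(y,\phi,r,t) + u\rho(\eta,\chi,r,t)) - Df(\rho(y,\phi,r,t))\|$ times $2T^{1/p}(\|\eta\|_{C[-R,T]} + \|\chi\|_{C[-R,0]})$, then divide by the product norm. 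The main obstacle is showing that this supremum tends to $0$ as $\|(\eta,\chi)\| \to 0$: this is where the uniform continuity of $Df$ on a bounded set containing all the arguments $\rho(y,\phi,r,t) + u\rho(\eta,\chi,r,t)$ is used, noting $\|u\rho(\eta,\chi,r,t)\| \le 2(\|\eta\|_{C[-R,T]} + \|\chi\|_{C[-R,0]})$, by the same reasoning as in the preceding lemma. This establishes the Fréchet differentiability.

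Finally, the stated operator-norm inequality for $A_{y,\phi,r} - A_{y_0,\phi_0,r}$ is obtained exactly as in the boundedness step, since the derivative of $[(A_{y,\phi,r} - A_{y_0,\phi_0,r})(\eta,\chi)]$ on $[0,T]$ equals $[Df(\rho(y,\phi,r,t)) - Df(\rho(y_0,\phi_0,r,t))]\rho(\eta,\chi,r,t)$. Continuity of $(y,\phi) \mapsto A_{y,\phi,r}$ then follows at once from the preceding lemma, which guarantees that $\sup_{t \in [0,T]} \|Df(\rho(y,\phi,r,t)) - Df(\rho(y_0,\phi_0,r,t))\| \to 0$ as $(y,\phi) \to (y_0,\phi_0)$ uniformly in $r$.
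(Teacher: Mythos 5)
Your proposal is correct and follows essentially the same route as the paper's proof: the same boundedness estimate yielding the factor $2T^{\frac{1}{p}}\sup_{t \in [0, T]}\|Df(\rho(y, \phi, r, t))\|$, the same mean-value integral representation of the remainder controlled via uniform continuity of $Df$ on a bounded set (with the observation $\|u\rho(\eta, \chi, r, t)\| \le 2\|(\eta, \chi)\|$), and the same reduction of the continuity of $(y, \phi) \mapsto A_{y, \phi, r}$ to the preceding lemma. There are no gaps.
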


\begin{proof}
Let
	\begin{equation*}
		\|(\eta, \chi)\| := \|\eta\|_{C[-R, T]} + \|\chi\|_{C[-R, 0]}
	\end{equation*}
for each $(\eta, \chi) \in C[-R, T] \times C[-R, 0]$.

\textbf{Step 1.}
Let $(y, \phi) \in C[-R, T] \times C[-R, 0]$ be fixed.
Then for all $(\eta, \chi) \in C[-R, T] \times C[-R, 0]$,
	\begin{align*}
		&\|A_{y, \phi, r}(\eta, \chi)\|_{\mathcal{W}^{1, p}[-R, T]} \\
		&=
		\biggl( \int_0^T
			|Df(\rho(y, \phi, r, t))\rho(\eta, \chi, r, t)|^p
		\mspace{2mu} \mathrm{d}t \biggr)^{\frac{1}{p}} \\
		&\le \biggl( 2T^{\frac{1}{p}} \sup_{t \in [0, T]} \|Df(\rho(y, \phi, r, t))\| \biggr) \|(\eta, \chi)\|.
	\end{align*}
This shows that
	\begin{equation*}
		A_{y, \phi, r} \colon C[-R, T] \times C[-R, 0] \to \mathcal{W}^{1, p}[-R, T]
	\end{equation*}
is a bounded linear operator.

\textbf{Step 2.}
Let $(y_0, \phi_0) \in C[-R, T] \times C[-R, 0]$ be fixed.
For $(\eta, \chi) \in C[-R, T] \times C[-R, 0]$, let
	\begin{equation*}
		y := y_0 + \eta \mspace{10mu} \text{and} \mspace{10mu} \phi := \phi_0 + \chi.
	\end{equation*}
Then
	\begin{align*}
		&f(\rho(y, \phi, r, t)) - f(\rho(y_0, \phi_0, r, t)) \\
		&= \int_0^1 Df \bigl( \rho(y_0, \phi_0, r, t) + u\rho(\eta, \chi, r, t) \bigr) \mspace{2mu} \mathrm{d}u
		\cdot \rho(\eta, \chi, r, t)
	\end{align*}
holds for all $r \in [0, R]$ and $t \in [0, T]$.

Let $\ep > 0$.
In the same way as the preceding lemma, there is $\delta > 0$ such that $\|(\eta, \chi)\| \le \delta$ implies
	\begin{equation*}
		\sup_{t \in [0, T]}
		\bigl\| Df \bigl( \rho(y_0, \phi_0, r, t) + u\rho(\eta, \chi, r, t) \bigr) - Df(\rho(y_0, \phi_0, r, t)) \bigr\|
		\le \ep
	\end{equation*}
because
	\begin{equation*}
		\|u\rho(\eta, \chi, r, t)\| \le 2\|(\eta, \chi)\|.
	\end{equation*}
Therefore, for such $(\eta, \chi)$, we have
	\begin{align*}
		&\|\mathcal{T}(y, \phi, r) - \mathcal{T}(y_0, \phi_0, r) - A_{y_0, \phi_0, r}(\eta, \chi)\|_{\mathcal{W}^{1, p}[-R, T]} \\
		&=
		\biggl( \int_0^T
			|f(\rho(y, \phi, r, t)) - f(\rho(y_0, \phi_0, r, t)) - Df(\rho(y_0, \phi_0, r, t))\rho(\eta, \chi, r, t)|^p
		\mspace{2mu} \mathrm{d}t \biggr)^{\frac{1}{p}} \\
		&\le \biggl( \int_0^T
			\biggl( \int_0^1
				\bigl\| Df \bigl( \rho(y_0, \phi_0, r, t) + u\rho(\eta, \chi, r, t) \bigr) - Df(\rho(y_0, \phi_0, r, t)) \bigr\|
			\mspace{2mu} \mathrm{d}u \biggr)^p \\
			&\mspace{40mu} \cdot \|\rho(\eta, \chi, r, t)\|^p
		\mspace{2mu} \mathrm{d}t \biggr)^{\frac{1}{p}} \\
		&\le 2\ep T^{\frac{1}{p}} \|(\eta, \chi)\|.
	\end{align*}
This shows the Fr\'{e}chet differentiability of $\mathcal{T}(\cdot, \cdot, r)$ at $(y_0, \phi_0)$.

\textbf{Step 3.}
Let $(y_0, \phi_0) \in C[-R, T] \times C[-R, 0]$ be fixed.
For all $(y, \phi), (\eta, \chi) \in C[-R, T] \times C[-R, 0]$,
	\begin{align*}
		&\|(A_{y, \phi, r} - A_{y_0, \phi_0, r})(\eta, \chi)\|_{\mathcal{W}^{1, p}[-R, T]} \\
		&=
		\biggl( \int_0^T
			|[Df(\rho(y, \phi, r, t)) - Df(\rho(y_0, \phi_0, r, t))]\rho(\eta, \chi, r, t)|^p
		\mspace{2mu} \mathrm{d}t \biggr)^{\frac{1}{p}} \\
		&\le \biggl( 2T^{\frac{1}{p}} \sup_{t \in [0, T]} \|Df(\rho(y, \phi, r, t)) - Df(\rho(y_0, \phi_0, r, t))\| \biggr)
		\|(\eta, \chi)\|.
	\end{align*}
This shows
	\begin{equation*}
		\|A_{y, \phi, r} - A_{y_0, \phi_0, r}\|
		\le 2T^{\frac{1}{p}} \sup_{t \in [0, T]} \|Df(\rho(y, \phi, r, t)) - Df(\rho(y_0, \phi_0, r, t))\|,
	\end{equation*}
which converges to $0$ uniformly in $r$ as $(y, \phi) \to (y_0, \phi_0)$ by the preceding lemma.
Therefore, $(y, \phi) \mapsto A_{y, \phi, r}$ is continuous at $(y_0, \phi_0)$.

This completes the proof.
\end{proof}

\begin{remark}
The function
	\begin{equation*}
		\mathcal{T}(\cdot, \cdot, r) \colon
		\mathcal{W}^{1, p}([-R, T], \mathbb{R}^N) \times \mathcal{W}^{1, p}([-R, 0], \mathbb{R}^N) \to \mathcal{W}^{1, p}([-R, T], \mathbb{R}^N)
	\end{equation*}
is also continuously Fr\'{e}chet differentiable because the inclusion
	\begin{equation*}
		\mathcal{W}^{1, p}([a, b], \mathbb{R}^N) \subset C([a, b], \mathbb{R}^N), \mspace{20mu} \text{where $a < b$},
	\end{equation*}
is continuous (see Lemma~\ref{lem:equivalent norm in W^{1, p}}).
\end{remark}

\subsubsection{$C^1$-smoothness with respect to prolongation, history, and delay}

Let $1 \le p < \infty$ and $T > 0$ be given.
We continue to use the following notations used
in Theorems~\ref{thm:C^1-smoothness wrt delay} and \ref{thm:C^1-smoothness wrt prolongation and history}.

\begin{notation}
Let $(y, \phi) \in \mathcal{W}^{1, p}[-R, T] \times \mathcal{W}^{1, p}[-R, 0]$ and $r \in [0, R]$.
	\begin{equation*}
		A_{y, \phi, r} \colon \mathcal{W}^{1, p}[-R, T] \times \mathcal{W}^{1, p}[-R, 0] \to \mathcal{W}^{1, p}[-R, T]
	\end{equation*}
is the bounded linear operator defined by
	\begin{equation*}
		[A_{y, \phi, r}(\eta, \chi)](t) =
		\begin{cases}
			0 & (t \in [-R, 0]), \\
			\int_0^t
				Df(\rho(y, \phi, r, s))\rho(\eta, \chi, r, s)
			\mspace{2mu} \mathrm{d}s & (t \in [0, T]).
		\end{cases}
	\end{equation*}
\end{notation}

\begin{notation}
Let $(y, \phi) \in \mathcal{W}^{1, p}[-R, T] \times \mathcal{W}^{1, p}[-R, 0]$ and $r \in [0, R]$.
$B_{y, \phi, r} \in \mathcal{W}^{1, p}[-R, T]$ is defined by
	\begin{equation*}
		B_{y, \phi, r}(t) =
		\begin{cases}
			0 & (t \in [-R, 0]), \\
			-\int_0^t
				D_2f(\rho(y, \phi, r, s)) (y + \bar{\phi})'(s - r)
			\mspace{2mu} \mathrm{d}s & (t \in [0, T]).
		\end{cases}
	\end{equation*}
\end{notation}

\begin{theorem}\label{thm:C^1-smoothness wrt prolongation, history, and delay, single const delay}
Suppose that $f$ is of class $C^1$.
Then 
	\begin{equation*}
		\mathcal{T} \colon \mathcal{W}^{1, p}([-R, T], \mathbb{R}^N) \times \mathcal{W}^{1, p}([-R, 0], \mathbb{R}^N) \times [0, R] \to \mathcal{W}^{1, p}([-R, T], \mathbb{R}^N)
	\end{equation*}
is continuously Fr\'{e}chet differentiable whose Fr\'{e}chet derivative at $(y, \phi, r)$ is given by
	\begin{equation*}
		[D\mathcal{T}(y, \phi, r)](\eta, \chi, \xi)
		= A_{y, \phi, r}(\eta, \chi) + \xi B_{y, \phi, r}
	\end{equation*}
for all $(\eta, \chi, \xi) \in \mathcal{W}^{1, p}([-R, T], \mathbb{R}^N) \times \mathcal{W}^{1, p}([-R, 0], \mathbb{R}^N) \times \mathbb{R}$.
\end{theorem}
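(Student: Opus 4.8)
The plan is to deduce the joint continuous Fr\'{e}chet differentiability of $\mathcal{T}$ from the continuity of its two partial derivatives, invoking the standard fact (among the properties of Fr\'{e}chet differentiability recorded in Appendix~\ref{sec:C^1-uniform contraction theorem}) that a map whose partial Fr\'{e}chet derivatives exist on a possibly non-open product domain and are jointly continuous is continuously Fr\'{e}chet differentiable, with total derivative equal to the sum of the partials. Here the two blocks of variables are $(y, \phi)$ and $r$. By Theorem~\ref{thm:C^1-smoothness wrt prolongation and history} together with the remark following it, the partial derivative with respect to $(y, \phi)$ exists on $\mathcal{W}^{1, p}[-R, T] \times \mathcal{W}^{1, p}[-R, 0]$ and equals $A_{y, \phi, r}$; by Theorem~\ref{thm:C^1-smoothness wrt delay} the partial derivative with respect to $r$ exists and equals $B_{y, \phi, r}$. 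Thus the asserted formula $[D\mathcal{T}(y, \phi, r)](\eta, \chi, \xi) = A_{y, \phi, r}(\eta, \chi) + \xi B_{y, \phi, r}$ is precisely the sum of the two partials, and it remains to verify that $(y, \phi, r) \mapsto A_{y, \phi, r}$ and $(y, \phi, r) \mapsto B_{y, \phi, r}$ are continuous.

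For the continuity of $A$, I would split
\[
	\|A_{y, \phi, r} - A_{y_0, \phi_0, r_0}\|
	\le \|A_{y, \phi, r} - A_{y_0, \phi_0, r}\| + \|A_{y_0, \phi_0, r} - A_{y_0, \phi_0, r_0}\|.
\]
The first term tends to $0$ as $(y, \phi) \to (y_0, \phi_0)$ \emph{uniformly in} $r$ by Theorem~\ref{thm:C^1-smoothness wrt prolongation and history}. For the second term I would estimate the operator norm on the unit ball of $\mathcal{W}^{1, p}[-R, T] \times \mathcal{W}^{1, p}[-R, 0]$ by writing, for $t \in [0, T]$, the integrand difference as
\[
	[Df(\rho(y_0, \phi_0, r, t)) - Df(\rho(y_0, \phi_0, r_0, t))]\rho(\eta, \chi, r, t) + Df(\rho(y_0, \phi_0, r_0, t))[\rho(\eta, \chi, r, t) - \rho(\eta, \chi, r_0, t)].
\]
The first summand is controlled by the lemma preceding Theorem~\ref{thm:C^1-smoothness wrt delay}, which makes $\sup_{t} \|Df(\rho(y_0, \phi_0, r, t)) - Df(\rho(y_0, \phi_0, r_0, t))\|$ small, together with the embedding $\mathcal{W}^{1, p}[-R, T] \hookrightarrow C[-R, T]$. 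The second summand involves $\rho(\eta, \chi, r, t) - \rho(\eta, \chi, r_0, t) = (0, (\eta + \bar{\chi})(t - r) - (\eta + \bar{\chi})(t - r_0))$; here a naive appeal to continuity of translation in $L^p$ fails because it is not uniform over the unit ball, so instead I would write the difference as $\int_{-r_0}^{-r} (\eta + \bar{\chi})'(t + \theta) \mathrm{d}\theta$ and apply Corollary~\ref{cor:double integral, small order, L^p} to obtain a bound of the form $K|r - r_0| \cdot \|(\eta, \chi)\|_{\mathcal{W}^{1, p}}$, which is uniform over the unit ball. This is the one genuinely delicate point of the proof.

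For the continuity of $B$, since the $\mathcal{W}^{1, p}$-norm of $B_{y, \phi, r} - B_{y_0, \phi_0, r_0}$ equals the $L^p$-norm of the difference of the integrands $D_2 f(\rho(y, \phi, r, t)) (y + \bar{\phi})'(t - r)$, I would add and subtract $D_2 f(\rho(y_0, \phi_0, r_0, t)) (y + \bar{\phi})'(t - r)$ and then, writing $g := y + \bar{\phi}$ and $g_0 := y_0 + \bar{\phi}_0$, split the remaining translation term via $g'(t - r) - g_0'(t - r_0) = [g'(t - r) - g_0'(t - r)] + [g_0'(t - r) - g_0'(t - r_0)]$. The factor $\sup_{t} \|D_2 f(\rho(y, \phi, r, t)) - D_2 f(\rho(y_0, \phi_0, r_0, t))\|$ tends to $0$ by combining the two lemmas preceding Theorems~\ref{thm:C^1-smoothness wrt delay} and~\ref{thm:C^1-smoothness wrt prolongation and history}; the term $[g'(t - r) - g_0'(t - r)]$ is handled by a change of variables giving $\|g' - g_0'\|_{L^p[-R, T]} \to 0$; and $[g_0'(t - r) - g_0'(t - r_0)]$ tends to $0$ by the continuity of translation in $L^p$ applied to the fixed function $g_0'$.

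With both partials shown to be continuous, the quoted theorem on partial derivatives over a non-open parameter set (Appendix~\ref{sec:C^1-uniform contraction theorem}) yields that $\mathcal{T}$ is continuously Fr\'{e}chet differentiable with the stated derivative. The main obstacle, as indicated, is establishing the continuity of $A$ in $r$ uniformly over the unit ball, for which the double-integral estimate of Corollary~\ref{cor:double integral, small order, L^p}, rather than bare continuity of translation, is essential.
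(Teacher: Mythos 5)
Your proposal is correct and essentially reproduces the paper's proof: the paper likewise reduces the theorem to the joint continuity of $(y, \phi, r) \mapsto A_{y, \phi, r}$ and $(y, \phi, r) \mapsto B_{y, \phi, r}$ in operator norm, obtains the $r$-continuity of $A$ by exactly your splitting of the integrand together with the bound $\bigl| \int_{-r_0}^{-r} (\eta + \bar{\chi})'(t + \theta) \, \mathrm{d}\theta \bigr|$ estimated via Corollary~\ref{cor:double integral, small order, L^p} (the same delicate point you identify, since continuity of translation in $L^p$ is not uniform over the unit ball), and handles $B$ by the same combination of the two $Df$-lemmas with continuity of translation applied to the fixed derivative. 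One small caveat: the ``continuous partial Fr\'{e}chet derivatives imply continuous total differentiability'' principle you invoke is not actually recorded in Appendix~\ref{sec:C^1-uniform contraction theorem} --- but the paper's proof uses it just as tacitly (``It is sufficient to show the continuity\dots''), and it is valid here since the parameter set $[0, R]$ is convex, so your argument sits at the same level of rigor as the paper's.
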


\begin{proof}
It is sufficient to show the continuity of
	\begin{equation*}
		(y, \phi, r) \mapsto A_{y, \phi, r}
			\mspace{10mu} \text{and} \mspace{10mu}
		(y, \phi, r) \mapsto B_{y, \phi, r}
	\end{equation*}
with respect to the corresponding operator norms.
Let
	\begin{equation*}
		\|(\eta, \chi)\| := \|\eta\|_{\mathcal{W}^{1, p}[-R, T]} + \|\chi\|_{\mathcal{W}^{1, p}[-R, 0]}
	\end{equation*}
for each $(\eta, \chi) \in \mathcal{W}^{1, p}[-R, T] \times \mathcal{W}^{1, p}[-R, 0]$.

\textbf{Step 1.}
The family of functions
	\begin{equation*}
		(y, \phi) \mapsto A_{y, \phi, r},
	\end{equation*}
where $r \in [0, R]$, is pointwise equicontinuous from Theorem~\ref{thm:C^1-smoothness wrt prolongation and history}.
Therefore, we only have to show the continuity of
	\begin{equation*}
		r \mapsto A_{y, \phi, r}
	\end{equation*}
for each fixed $(y, \phi) \in \mathcal{W}^{1, p}[-R, T] \times \mathcal{W}^{1, p}[-R, 0]$.

Let $r_0 \in [0, R]$ be fixed.
Since
	\begin{align*}
		&|Df(\rho(y, \phi, r, t))\rho(\eta, \chi, r, t) - Df(\rho(y, \phi, r_0, t))\rho(\eta, \chi, r_0, t)| \\
		&\le
		\|Df(\rho(y, \phi, r, t)) - Df(\rho(y, \phi, r_0, t))\| \|\rho(\eta, \chi, r, t)\| \\
			&\mspace{40mu} + \|Df(\rho(y, \phi, r_0, t))\| \|\rho(\eta, \chi, r, t) - \rho(\eta, \chi, r_0, t)\| \\
		&\le
		2\sup_{t \in [0, T]} \|Df(\rho(y, \phi, r, t)) - Df(\rho(y, \phi, r_0, t))\| \cdot \|(\eta, \chi)\| \\
			&\mspace{40mu} + \sup_{t \in [0, T]} \|Df(\rho(y, \phi, r_0, t))\|
			\cdot |(\eta + \bar{\chi})(t - r) - (\eta + \bar{\chi})(t - r_0)|
	\end{align*}
for all $t \in [0, T]$, $r \in [0, R]$, and $(\eta, \chi) \in \mathcal{W}^{1, p}[-R, T] \times \mathcal{W}^{1, p}[-R, 0]$, we have
	\begin{align*}
		&\|(A_{y, \phi, r} - A_{y, \phi, r_0})(\eta, \chi)\|_{\mathcal{W}^{1, p}[-R, T]} \\
		&=
		\biggl( \int_0^T
			|Df(\rho(y, \phi, r, t))\rho(\eta, \chi, r, t) - Df(\rho(y, \phi, r_0, t))\rho(\eta, \chi, r_0, t)|^p
		\mspace{2mu} \mathrm{d}t \biggr)^{\frac{1}{p}} \\
		&\le
		\biggl( 2T^{\frac{1}{p}} \sup_{t \in [0, T]} \|Df(\rho(y, \phi, r, t)) - Df(\rho(y, \phi, r_0, t))\| \biggr) \|(\eta, \chi)\| \\
			&\mspace{40mu} + \sup_{t \in [0, T]} \|Df(\rho(y, \phi, r_0, t))\|
			\biggl( \int_0^T
				|(\eta + \bar{\chi})(t - r) - (\eta + \bar{\chi})(t - r_0)|^p
			\mspace{2mu} \mathrm{d}t \biggr)^{\frac{1}{p}}
	\end{align*}
by the Minkowski inequality.
Here the estimate
	\begin{align*}
		\biggl( \int_0^T
			|(\eta + \bar{\chi})(t - r) - (\eta + \bar{\chi})(t - r_0)|^p
		\mspace{2mu} \mathrm{d}t \biggr)^{\frac{1}{p}}
		&=
		\biggl( \int_0^T
			\biggl| \int_{-r_0}^{-r} (\eta + \bar{\chi})'(t + \theta) \mspace{2mu} \mathrm{d}\theta \biggr|^p
		\mspace{2mu} \mathrm{d}t \biggr)^{\frac{1}{p}} \\
		&\le \|\eta + \bar{\chi}\|_{\mathcal{W}^{1, p}[-R, T]} |r - r_0| \\
		&\le |r - r_0| \cdot \|(\eta, \chi)\|
	\end{align*}
follows from from Corollary~\ref{cor:double integral, small order, L^p}.
As the conclusion, we obtain
	\begin{align*}
		\|A_{y, \phi, r} - A_{y, \phi, r_0}\|
		&\le 2T^{\frac{1}{p}} \sup_{t \in [0, T]} \|Df(\rho(y, \phi, r, t)) - Df(\rho(y, \phi, r_0, t))\| \\
		&\mspace{40mu} + \sup_{t \in [0, T]} \|Df(\rho(y, \phi, r_0, t))\| |r - r_0|,
	\end{align*}
which shows $\lim_{r \to r_0} \|A_{y, \phi, r} - A_{y, \phi, r_0}\| = 0$.

\textbf{Step 2.}
From Theorem~\ref{thm:C^1-smoothness wrt delay}, the function
	\begin{equation*}
		r \mapsto B_{y, \phi, r}
	\end{equation*}
is continuous for each fixed $(y, \phi) \in \mathcal{W}^{1, p}[-R, T] \times \mathcal{W}^{1, p}[-R, 0]$.
Therefore, we only have to show the pointwise equicontinuity of the family of functions
	\begin{equation*}
		(y, \phi) \mapsto B_{y, \phi, r},
	\end{equation*}
where $r \in [0, R]$.
This is indeed true in view of the following calculation:
	\begin{align*}
		&\|B_{y, \phi, r} - B_{y_0, \phi_0, r}\|_{\mathcal{W}^{1, p}[-R, T]} \\
		&=
		\biggl( \int_0^T
			|D_2f(\rho(y, \phi, r, t))(y + \bar{\phi})'(t - r) - D_2f(\rho(y_0, \phi_0, r, t))(y_0 + \bar{\phi}_0)'(t - r)|^p
		\mspace{2mu} \mathrm{d}t\biggr)^{\frac{1}{p}} \\
		&\le
		\biggl( \sup_{t \in [0, T]} \|D_2f(\rho(y, \phi, r, t)) - D_2f(\rho(y_0, \phi_0, r, t))\| \biggr)
		\|y + \bar{\phi}\|_{\mathcal{W}^{1, p}[-R, T]} \\
			&\mspace{40mu} + \biggl( \sup_{t \in [0, T]} \|D_2f(\rho(y_0, \phi_0, r, t))\| \biggr)
			\|(y - y_0, \phi - \phi_0)\|.
	\end{align*}
The detail has been omitted because this is similar to the case of the special case discussed in the previous subsection.

This completes the proof.
\end{proof}

\subsubsection{$C^1$-smooth dependence of solutions on initial histories and delay}

Let $1 \le p < \infty$ be given.

\begin{theorem}\label{thm:C^1-smooth dependence on history and delay, single const delay}
Let $B \subset \mathcal{W}^{1, p}([-R, 0], \mathbb{R}^N)$ be an open subset which is bounded with respect to the supremum norm.
Suppose that $f$ is of class $C^1$.
Then there exists $T > 0$ such that the function
	\begin{equation*}
		B \times [0, R] \ni (\phi, r) \mapsto x(\cdot; \phi, r)|_{[-R, T]} \in \mathcal{W}^{1, p}([-R, T], \mathbb{R}^N)
	\end{equation*}
is well-defined and continuously Fr\'{e}chet differentiable.
\end{theorem}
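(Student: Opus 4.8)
The plan is to realize the solution map as the composition of the fixed-point map of the uniform contraction $\mathcal{T}$ with the prolongation operator, and then to transfer the continuous Fr\'{e}chet differentiability of $\mathcal{T}$ established in Theorem~\ref{thm:C^1-smoothness wrt prolongation, history, and delay, single const delay} to the fixed point by means of the $C^1$-uniform contraction theorem (Theorem~\ref{thm:C^1-uniform contraction thm}).

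First I would fix the functional-analytic frame. Let $X_0$ denote the closed subspace of $\mathcal{W}^{1, p}([-R, T], \mathbb{R}^N)$ consisting of those $\gamma$ with $R_0\gamma = 0$; then $\bar{\varGamma}_{1, p}(\delta)$ is exactly the closed ball of radius $\delta$ in $X_0$, hence a complete metric space. Since $B$ is bounded with respect to the supremum norm, the continuous inclusion $\mathcal{W}^{1, p}([-R, 0], \mathbb{R}^N) \subset C([-R, 0], \mathbb{R}^N)$ lets me apply Proposition~\ref{prop:unique existence, single const delay} to the bounded set $B$ and obtain a single $T > 0$, shrunk if necessary so that the estimates of Lemma~\ref{lem:uniform contraction, single const delay} hold, for which $\mathcal{T}(\cdot, \phi, r)$ carries $\bar{\varGamma}_{1, p}(\delta)$ into $\varGamma_{1, p}(\delta) \subset X_0$. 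The decisive observation is that for $\gamma \in X_0$ one has $\|\gamma\|_{C[-R, T]} \le T^{1/q}\|\gamma\|_{\mathcal{W}^{1, p}[-R, T]}$, because $\gamma$ vanishes on $[-R, 0]$ and so $\gamma(t) = \int_0^t \gamma'(s) \, \mathrm{d}s$ on $[0, T]$ while $\|\gamma\|_{\mathcal{W}^{1, p}[-R, T]} = \|\gamma'\|_{L^p[0, T]}$; consequently the $C$-Lipschitz bound recorded in the remark following Lemma~\ref{lem:uniform contraction, single const delay} becomes a genuine $\mathcal{W}^{1, p}$-contraction for small $T$.

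Next I would apply the $C^1$-uniform contraction theorem with contraction variable in $\bar{\varGamma}_{1, p}(\delta) \subset X_0$ and parameter $(\phi, r)$ ranging over $B \times [0, R]$. Its two hypotheses are now in hand: the uniform contraction property just described, and the continuous Fr\'{e}chet differentiability of $\mathcal{T}$ on $\mathcal{W}^{1, p}([-R, T], \mathbb{R}^N) \times \mathcal{W}^{1, p}([-R, 0], \mathbb{R}^N) \times [0, R]$ from Theorem~\ref{thm:C^1-smoothness wrt prolongation, history, and delay, single const delay}, whose restriction to $X_0$ in the first slot remains of class $C^1$. The theorem then yields that the fixed point $(\phi, r) \mapsto y(\phi, r) \in \mathcal{W}^{1, p}([-R, T], \mathbb{R}^N)$ is continuously Fr\'{e}chet differentiable on $B \times [0, R]$. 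Finally, since $x(\cdot; \phi, r)|_{[-R, T]} = y(\phi, r) + \bar{\phi}|_{[-R, T]}$ and the prolongation operator $\phi \mapsto \bar{\phi}|_{[-R, T]}$ is continuous and linear, hence of class $C^1$, the solution map is a sum of two $C^1$ maps and is therefore itself continuously Fr\'{e}chet differentiable; well-definedness into $\mathcal{W}^{1, p}([-R, T], \mathbb{R}^N)$ is automatic because both summands lie in that space.

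The main obstacle is not a single estimate but the correct application of the contraction principle in this nonstandard setting: the parameter set $B \times [0, R]$ is not open (the factor $[0, R]$ has boundary, and in particular the degenerate value $r = 0$ is admitted), so the classical $C^1$-uniform contraction theorem does not apply verbatim and one must invoke the version of Appendix~\ref{sec:C^1-uniform contraction theorem} that permits non-open parameter sets. One must also keep track of the fact that the contraction acts on the linear constraint $R_0\gamma = 0$, and that the mismatch between the supremum norm appearing in the contraction estimate and the $\mathcal{W}^{1, p}$-norm carrying the metric is reconciled precisely through the vanishing of $\gamma$ on the past interval. Once these points are settled, the remaining verifications amount to the bookkeeping already supplied by the preceding lemmas.
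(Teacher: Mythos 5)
Your proposal is correct and takes essentially the same route as the paper's own proof: well-definedness of a common $T$ via Proposition~\ref{prop:unique existence, single const delay}, the uniform contraction of Lemma~\ref{lem:uniform contraction, single const delay} upgraded to a genuine $\mathcal{W}^{1,p}$-contraction on the set with $R_0\gamma = 0$ (your estimate $\|\gamma\|_{C[-R,T]} \le T^{1/q}\|\gamma\|_{\mathcal{W}^{1,p}[-R,T]}$ makes explicit what the paper's remark after that lemma leaves implicit), the continuous Fr\'{e}chet differentiability of $\mathcal{T}$ from Theorem~\ref{thm:C^1-smoothness wrt prolongation, history, and delay, single const delay}, the appendix's $C^1$-uniform contraction theorem for non-open parameter sets, and finally addition of the continuous linear prolongation operator. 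Two cosmetic adjustments align you fully with the paper: apply Theorem~\ref{thm:C^1-uniform contraction thm} with the \emph{open} ball $\varGamma_{1,p}(\delta)$ as the domain $U$ (the theorem requires $U$ open, and the fixed points land there since $\mathcal{T}$ maps $\bar{\varGamma}_{1,p}(\delta)$ into $\varGamma_{1,p}(\delta)$), and invoke Corollary~\ref{cor:uniqueness of a linear approximation} to secure uniqueness of the linear approximation on the non-open parameter set $B \times [0, R]$, which is what turns the theorem's conclusion into Fr\'{e}chet differentiability in the paper's sense.
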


\begin{proof}
\textbf{Step 1.}
From the unique existence theorem (Proposition~\ref{prop:unique existence, single const delay}),
there is $T > 0$ such that the family of functions
	\begin{equation*}
		x(\cdot; \phi, r)|_{[-R, T]} \colon [-R, T] \to \mathbb{R}^N,
	\end{equation*}
where $(\phi, r) \in B \times [0, R]$, is well-defined, i.e.,
	\begin{equation*}
		T_{\phi, r} > T \mspace{20mu} (\forall (\phi, r) \in B \times [0, R]).
	\end{equation*}

\textbf{Step 2.}
By choosing small $T > 0$, we may assume that the family of maps
	\begin{equation*}
		\mathcal{T}(\cdot; \phi, r) \colon \bar{\varGamma}_{1, p}(\delta) \to \varGamma_{1, p}(\delta),
	\end{equation*}
where $(\phi, r) \in B \times [0, R]$, is a uniform contraction
by the uniform contraction lemma (Lemma~\ref{lem:uniform contraction, single const delay}).
Then the Banach fixed point theorem implies that $\mathcal{T}(\cdot; \phi, r)$ has the unique fixed point
$y(\cdot; \phi, r) \in \varGamma_{1, p}(\delta)$ for each $(\phi, r) \in B \times [0, R]$
because $\bar{\varGamma}_{1, p}(\delta)$ is a complete metric space.
By the uniqueness,
	\begin{equation*}
		x(\cdot; \phi, r)|_{[-R, T]} = y(\cdot; \phi, r) + \bar{\phi}_{[-R, T]}
	\end{equation*}
holds.

\textbf{Step 3.}
By the $C^1$-smoothness theorem
(Theorem~\ref{thm:C^1-smoothness wrt prolongation, history, and delay, single const delay}), the function
	\begin{equation*}
		\mathcal{T} \colon \varGamma_{1, p}(\delta) \times B \times [0, R] \to \varGamma_{1, p}(\delta)
	\end{equation*}
is continuously Fr\'{e}chet differentiable.
Therefore, Theorem~\ref{thm:C^1-uniform contraction thm}
together with Corollary~\ref{cor:uniqueness of a linear approximation} implies that
	\begin{equation*}
		B \times [0, R] \ni (\phi, r) \mapsto y(\cdot; \phi, r) \in \varGamma_{1, p}(\delta)
	\end{equation*}
is continuously Fr\'{e}chet differentiable.
This shows that
	\begin{equation*}
		B \times [0, R] \ni (\phi, r) \mapsto x(\cdot; \phi, r)|_{[-R, T]} \in \mathcal{W}^{1, p}[-R, T]
	\end{equation*}
is also continuously Fr\'{e}chet differentiable because
	\begin{equation*}
		\mathcal{W}^{1, p}[-R, 0] \ni \phi \mapsto \bar{\phi}|_{[-R, T]} \in \mathcal{W}^{1, p}[-R, T]
	\end{equation*}
is a continuous linear map.
\end{proof}

\subsubsection{$C^1$-smoothness of solution semiflow with a delay parameter}

Let $1 \le p < \infty$ be given.

\begin{theorem}\label{thm:C^1-smoothness of sol semiflow with delay, single const delay}
We define a map
	\begin{equation*}
		\varPhi \colon \mathbb{R}_+ \times \mathcal{W}^{1, p}([-R, 0], \mathbb{R}^N) \times [0, R] \supset \dom(\varPhi)
		\to \mathcal{W}^{1, p}([-R, 0], \mathbb{R}^N) \times [0, R]
	\end{equation*}
by
	\begin{equation*}
		\dom(\varPhi) = \bigcup_{(\phi, r) \in \mathcal{W}^{1, p}[-R, 0] \times [0, R]} [0, T_{\phi, r}) \times \{(\phi, r)\},
			\mspace{15mu}
		\varPhi(t, \phi, r) = (R_tx(\cdot; \phi, r), r).
	\end{equation*}
Suppose that $f$ is of class $C^1$.
Then $\varPhi$ is a $C^1$-maximal semiflow.
\end{theorem}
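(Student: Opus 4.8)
The plan is to verify the hypotheses of the abstract result on $C^1$-maximal semiflows, Theorem~\ref{thm:C^1-maximal semiflow}, for the state space $Y := \mathcal{W}^{1, p}([-R, 0], \mathbb{R}^N) \times [0, R]$. Thus two things must be checked: first, that $\varPhi$ is a maximal semiflow on $Y$ in the sense of Appendix~\ref{sec:maximal semiflows}; and second, the local $C^1$-regularity in the state variable $(\phi, r)$ together with the continuity properties required by that theorem. The nontrivial content is entirely supplied by Theorem~\ref{thm:C^1-smooth dependence on history and delay, single const delay} and by the continuity of the history operators established in Section~\ref{sec:history spaces of Sobolev type}; the abstract theorem then upgrades the local statement to the full semiflow.

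First I would check that $\varPhi$ is a maximal semiflow. The initial condition $\varPhi(0, \phi, r) = (R_0 x(\cdot; \phi, r), r) = (\phi, r)$ holds because $x(\theta; \phi, r) = \phi(\theta)$ for $\theta \in [-R, 0]$, so $R_0 x(\cdot; \phi, r) = \phi$. For the cocycle identity $\varPhi(t + s, \phi, r) = \varPhi(t, \varPhi(s, \phi, r))$, observe that the second coordinate $r$ is preserved, and that the function $u \mapsto x(u + s; \phi, r)$ solves the same IVP with initial history $R_s x(\cdot; \phi, r)$ and delay $r$; by the uniqueness part of Proposition~\ref{prop:unique existence, single const delay} it must coincide with $x(\cdot; R_s x(\cdot; \phi, r), r)$, which yields $R_t x(\cdot; R_s x(\cdot; \phi, r), r) = R_{t + s} x(\cdot; \phi, r)$. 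Maximality of the interval $[0, T_{\phi, r})$ is inherited from the maximal solution, and continuity of the orbit $t \mapsto \varPhi(t, \phi, r)$ follows from Lemma~\ref{lem:continuity of orbit, Sobolev history sp} applied to the first coordinate.

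Next I would establish the local $C^1$-dependence in $(\phi, r)$. Fix $(\phi_0, r_0) \in Y$ and choose an open neighborhood $B \subset \mathcal{W}^{1, p}([-R, 0], \mathbb{R}^N)$ of $\phi_0$ that is bounded with respect to the supremum norm. By Theorem~\ref{thm:C^1-smooth dependence on history and delay, single const delay} there is $T > 0$ such that the solution map $S \colon (\phi, r) \mapsto x(\cdot; \phi, r)|_{[-R, T]}$ is well-defined and continuously Fr\'{e}chet differentiable from $B \times [0, R]$ into $\mathcal{W}^{1, p}([-R, T], \mathbb{R}^N)$. For each $t \in [0, T]$ the history operator $R_t \colon \mathcal{W}^{1, p}([-R, T], \mathbb{R}^N) \to \mathcal{W}^{1, p}([-R, 0], \mathbb{R}^N)$ is a bounded linear map (Lemma~\ref{lem:continuity of history operators, Sobolev history sp}), so for fixed $t$ the first coordinate $(\phi, r) \mapsto R_t x(\cdot; \phi, r) = R_t S(\phi, r)$ is $C^1$ in $(\phi, r)$ with derivative $R_t \circ DS(\phi, r)$, while the second coordinate is the projection $(\phi, r) \mapsto r$, which is linear and hence trivially $C^1$. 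The required joint continuity of the state-derivative in $(t, \phi, r)$ follows by combining the continuity of $DS$ from Theorem~\ref{thm:C^1-smooth dependence on history and delay, single const delay} with the equicontinuity and joint continuity of the family $\{R_t\}_{t \in [0, T]}$ noted after Lemma~\ref{lem:continuity of history operators, Sobolev history sp}.

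The main obstacle is conceptual rather than computational: the semiflow is \emph{not} differentiable in the time variable $t$, since differentiating $t \mapsto R_t x$ in the $\mathcal{W}^{1, p}$-norm would require differentiability of $x' \in L^p$, which fails for general states. This is precisely why the notion of $C^1$-maximal semiflow in Appendix~\ref{sec:maximal semiflows} demands $C^1$-dependence only in the state variable, with mere continuity in $t$, and why Theorem~\ref{thm:C^1-maximal semiflow} is formulated to propagate the local state-regularity across the semiflow. The delicate point in the verification is therefore to match the continuity-of-derivative hypothesis of that theorem: one must ensure that $(t, \phi, r) \mapsto R_t \circ DS(\phi, r)$ is continuous in the appropriate (strong operator) sense, for which the joint continuity of the history operators, rather than norm-continuity of $t \mapsto R_t$, is exactly what is available and what suffices.
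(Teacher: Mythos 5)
Your proposal is correct and follows essentially the same route as the paper's proof: the maximal semiflow property from Proposition~\ref{prop:unique existence, single const delay}, orbit continuity from Lemma~\ref{lem:continuity of orbit, Sobolev history sp}, local continuous Fr\'{e}chet differentiability of each time-$t$ map on $B \times [0, R]$ by composing the $C^1$ solution map of Theorem~\ref{thm:C^1-smooth dependence on history and delay, single const delay} with the bounded history operators of Lemma~\ref{lem:continuity of history operators, Sobolev history sp}, and then the abstract upgrade via the appendix theorems. Two minor remarks: since Theorem~\ref{thm:C^1-maximal semiflow} presupposes that $\varPhi$ is a $C^0$-maximal semiflow, you should explicitly invoke Theorem~\ref{thm:C^0-maximal semiflow} as well (your local joint continuity of $(t, \phi, r) \mapsto R_t S(\phi, r)$ supplies exactly its hypotheses), and your closing worry about strong-operator continuity is unnecessary because Theorem~\ref{thm:C^1-maximal semiflow} only requires, for each \emph{fixed} $t \in [0, T]$, norm continuity of $(\phi, r) \mapsto R_t \circ DS(\phi, r)$ in $\mathcal{L}$, which is immediate from $\|R_t\| \le 1$ (in the equivalent norm of Lemma~\ref{lem:equivalent norm in W^{1, p}}) together with the norm continuity of $DS$.
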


\begin{proof}
\textbf{Step 1.}
The unique existence theorem (Proposition~\ref{prop:unique existence, single const delay}) implies that
$\varPhi$ is a maximal semiflow with the escape time function $(\phi, r) \mapsto T_{\phi, r}$.

\textbf{Step 2.}
By the continuity of orbit (Lemma~\ref{lem:continuity of orbit, Sobolev history sp}),
	\begin{equation*}
		[0, T_{\phi, r}) \ni t \mapsto \varPhi(t, \phi, r) \in \mathcal{W}^{1, p}[-R, 0] \times [0, R]
	\end{equation*}
is continuous for every $(\phi, r) \in \mathcal{W}^{1, p}[-R, 0] \times [0, R]$.

\textbf{Step 3.}
Let $B \subset \mathcal{W}^{1, p}[-R, 0]$ be an open subset which is bounded with respect to the supremum norm.
By the $C^1$-smooth dependence theorem
(Theorem~\ref{thm:C^1-smooth dependence on history and delay, single const delay}), there is $T > 0$ such that
	\begin{equation*}
		B \times [0, R] \ni (\phi, r) \mapsto x(\cdot; \phi, r)|_{[-R, T]} \in \mathcal{W}^{1, p}[-R, T]
	\end{equation*}
is a well-defined continuously Fr\'{e}chet differentiable function, which implies
	\begin{equation*}
		[0, T] \times B \times [0, R] \subset \dom(\varPhi).
	\end{equation*}
By combining the above Fr\'{e}chet differentiability and the continuity of
	\begin{equation*}
		[0, T] \times \mathcal{W}^{1, p}[-R, T] \ni (t, x) \mapsto R_tx \in \mathcal{W}^{1, p}[-R, 0]
	\end{equation*}
(see Lemmas~\ref{lem:continuity of orbit, Sobolev history sp} and \ref{lem:continuity of history operators, Sobolev history sp}),
we obtain the following properties:
\begin{itemize}
\item the continuity of $\varPhi|_{[0, T] \times B \times [0, R]}$, i.e, 
	\begin{equation*}
		[0, T] \times B \times [0, R] \ni (t, \phi, r) \mapsto (R_tx(\cdot; \phi, r), r).
	\end{equation*}
\item the continuous Fr\'{e}chet differentiability of $\varPhi(t, \cdot, \cdot)|_{B \times [0, R]}$, i.e.,
	\begin{equation*}
		B \times [0, R] \ni (\phi, r) \mapsto (R_tx(\cdot; \phi, r), r)
	\end{equation*}
for each $t \in [0, T]$.
\end{itemize}

The above steps imply that $\varPhi$ is a $C^1$-maximal semiflow
from Theorems~\ref{thm:C^0-maximal semiflow} and \ref{thm:C^1-maximal semiflow}.
\end{proof}

\section{Comments and discussion}\label{sec:comments and discussion}

This paper reveals that the history spaces of Sobolev type $\mathcal{W}^{1, p}([-R, 0], \mathbb{R}^N)$ ($1 \le p < \infty$) arise
as the history spaces for the $C^1$-smooth dependence on initial histories and delay,
whose adoption is natural from the viewpoint of the differentiability of translation in $L^p$.
This paper also extends the regularity of initial histories from the Lipschitz continuity
and show that the topology induced by $\mathcal{W}^{1, p}$-norm is adapted,
where the history space of the Lipschitz continuous functions with the topology induced by $\mathcal{W}^{1, 1}$-norm
is used in the previous studies (see \cite{Hale--Ladeira 1991} and \cite{Hartung 2016b}).
Another feature of this paper is to prove the differentiability of solutions with respect to $r$ at $r = 0$.
It seems that there is some relationship with the $C^1$-smoothness of \textit{special flow} for the small delay
studied by Chicone~\cite{Chicone 2003}.

The extension of this work to the time- and state- dependent delay case will be a next task.
By a preparatory study,
it is expected that this extension explains a meaning of the strict monotonicity of the delayed argument function,
which is called the \textit{temporal order of reactions} by Walther~\cite{Walther 2009b}.
The study of the higher-order smoothness of solutions with respect to delay will also be a next task.
The results in Subsection~\ref{subsec:a special case} suggest that
it is appropriate to choose history spaces of higher-order Sobolev type,
where other spaces based on $W^{k, \infty}$ are used in previous studies
(see \cite{Chen--Hu--Wu 2010} and \cite{Hartung 2013d}).

\section*{Acknowledgment}
\addcontentsline{toc}{section}{Acknowledgment}

This work was supported by the Research Alliance Center for Mathematical Sciences, Tohoku University,
the Research Institute for Mathematical Sciences, an International Joint Usage/Research Center located in Kyoto University,
JSPS A3 Foresight Program, and JSPS KAKENHI Grant Number JP17H06460, JP19K14565.

\appendix
\section{Differentiability of translation in $L^p$}\label{sec:differentiability of translation in L^p}

We refer the reader to \cite{Tao 2011} and \cite{Brezis 2011} for general references of
theories of Lebesgue integration and Sobolev spaces, respectively.

\begin{lemma}\label{lem:double integral, small order, L^1}
Let $f \in L^1(\mathbb{R}, \mathbb{R})$ and $a < b$ be given real numbers.
Then for all $s, t \in \mathbb{R}$,
	\begin{equation*}
		\int_a^b \biggl| \int_s^t |f(x + y)| \mspace{2mu} \mathrm{d}y \biggr| \mspace{2mu} \mathrm{d}x
		\le \|f\|_{L^1(\mathbb{R})} |t - s|
	\end{equation*}
holds.
\end{lemma}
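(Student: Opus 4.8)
The plan is to remove the outer absolute value first, then interchange the order of integration by Tonelli's theorem, and finally exploit the translation invariance of Lebesgue measure.

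First I would reduce to the case $s \le t$. Since both $|t - s|$ and the inner quantity $\bigl| \int_s^t |f(x + y)| \,\mathrm{d}y \bigr|$ are unchanged when $s$ and $t$ are interchanged, there is no loss of generality in assuming $s \le t$. In that case the inner integrand $|f(x + y)|$ is nonnegative, so $\int_s^t |f(x + y)| \,\mathrm{d}y \ge 0$ and the outer absolute value may simply be dropped. The double integral to be estimated is then $\int_a^b \int_s^t |f(x + y)| \,\mathrm{d}y \,\mathrm{d}x$.

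Next, because $(x, y) \mapsto |f(x + y)|$ is nonnegative and measurable on $[a, b] \times [s, t]$, Tonelli's theorem justifies interchanging the order of integration, yielding $\int_s^t \int_a^b |f(x + y)| \,\mathrm{d}x \,\mathrm{d}y$. For each fixed $y$, the substitution $u = x + y$ turns the inner integral into $\int_{a + y}^{b + y} |f(u)| \,\mathrm{d}u$, which is bounded above by $\int_{\mathbb{R}} |f(u)| \,\mathrm{d}u = \|f\|_{L^1(\mathbb{R})}$, since enlarging the domain of integration of a nonnegative function only increases the integral. Integrating this constant bound over $y \in [s, t]$ produces the factor $t - s = |t - s|$ and hence the claimed inequality.

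There is no genuine obstacle here; the only point requiring a word of care is the legitimacy of the Tonelli interchange, which is immediate from the nonnegativity of the integrand, so no integrability hypothesis beyond $f \in L^1(\mathbb{R}, \mathbb{R})$ is needed. The translation invariance of Lebesgue measure, used through the substitution $u = x + y$, together with the monotonicity of the integral under enlargement of the domain, does the rest.
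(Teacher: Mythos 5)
Your proof is correct, and its core is the same as the paper's: a Tonelli interchange combined with the translation invariance of Lebesgue measure. The bookkeeping differs, though, in a way worth noting. You interchange first and then bound, for each fixed $y \in [s, t]$, the inner integral $\int_a^b |f(x + y)| \, \mathrm{d}x = \int_{a+y}^{b+y} |f(u)| \, \mathrm{d}u \le \|f\|_{L^1(\mathbb{R})}$, so the factor $|t - s|$ comes from the length of the $y$-interval. The paper instead substitutes first, rewriting the inner integral as $\int_{x+s}^{x+t} |f(y)| \, \mathrm{d}y$, introduces the slanted strip $A(t, s) = \{(x, y) : a \le x \le b, \ x + s \le y \le x + t\}$ and its indicator, and after Tonelli bounds the $x$-slice measure $\int_{[a,b]} 1_{A(t,s)}(x, y) \, \mathrm{d}x \le t - s$ for each $y \in \mathbb{R}$, so the factor $|t - s|$ comes from the slice width while $\|f\|_{L^1(\mathbb{R})}$ comes from integrating $|f(y)|$ over all of $\mathbb{R}$. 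The two factorizations are dual and equally valid; yours is shorter, while the paper's indicator formulation has one technical advantage: the integrand $|f(y)| 1_{A(t,s)}(x, y)$ is manifestly measurable on $\mathbb{R}^2$, whereas your direct application of Tonelli to $(x, y) \mapsto |f(x + y)|$ quietly uses that the composition of a merely Lebesgue measurable $f$ with the addition map is Lebesgue measurable --- true (replace $f$ by a Borel representative and note that the preimage of a null set under $(x, y) \mapsto x + y$ is null), but deserving of a word, and presumably the reason the paper organizes the argument as it does.
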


\begin{proof}
It is sufficient to consider the case $s < t$.
Let $A(t, s) \subset \mathbb{R}^2$ be the closed subset given by
	\begin{equation*}
		A(t, s) := \{\mspace{2mu} (x, y) : \text{$a \le x \le b$, $x + s \le y \le x + t$} \mspace{2mu}\},
	\end{equation*}
which is Lebesgue measurable.
Then the function
	\begin{equation*}
		\mathbb{R}^2 \ni (x, y) \mapsto |f(y)| 1_{A(t, s)}(x, y) \in \mathbb{R}
	\end{equation*}
is Lebesgue measurable.
Since for each fixed $x \in [a, b]$,
	\begin{equation*}
		\{\mspace{2mu} y \in \mathbb{R} : (x, y) \in A(t, s) \mspace{2mu}\} = [x + s, x + t],
	\end{equation*}
we have
	\begin{align*}
		\int_a^b \biggl( \int_s^t |f(x + y)| \mspace{2mu} \mathrm{d}y \biggr) \mspace{2mu} \mathrm{d}x
		&= \int_a^b \biggl( \int_{x + s}^{x + t} |f(y)| \mspace{2mu} \mathrm{d}y \biggr) \mspace{2mu} \mathrm{d}x \\
		&= \int_{[a, b]} \biggl( \int_\mathbb{R} |f(y)| 1_{A(t, s)}(x, y) \mspace{2mu} \mathrm{d}y \biggr) \mspace{2mu} \mathrm{d}x.
	\end{align*}
Therefore,
	\begin{align*}
		\int_a^b \biggl( \int_s^t |f(x + y)| \mspace{2mu} \mathrm{d}y \biggr) \mspace{2mu} \mathrm{d}x
		&= \int_\mathbb{R} \biggl( \int_{[a, b]} |f(y)| 1_{A(t, s)}(x, y) \mspace{2mu} \mathrm{d}x \biggr) \mspace{2mu} \mathrm{d}y \\
		&= \int_\mathbb{R} |f(y)| \biggl( \int_{[a, b]} 1_{A(t, s)}(x, y) \mspace{2mu} \mathrm{d}x \biggr) \mspace{2mu} \mathrm{d}y \\
		&\le (t - s) \|f\|_{L^1(\mathbb{R})} \\
		&< \infty
	\end{align*}
is valid by Tonelli's theorem.
\end{proof}

\begin{corollary}\label{cor:double integral, small order, L^p}
Let $1\le p < \infty$, $f \in L^p(\mathbb{R}, \mathbb{R}^N)$, and $a < b$ be given real numbers.
Then for all $s, t \in \mathbb{R}$,
	\begin{equation*}
		\biggl( \int_a^b
			\biggl| \int_s^t |f(x + y)| \mspace{2mu} \mathrm{d}y \biggr|^p
		\mspace{2mu} \mathrm{d}x \biggr)^{\frac{1}{p}}
		\le \|f\|_{L^p(\mathbb{R})} |t - s|
	\end{equation*}
holds.
\end{corollary}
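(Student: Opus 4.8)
The plan is to deduce the $L^p$ estimate from the $L^1$ estimate already established in Lemma~\ref{lem:double integral, small order, L^1}, using H\"{o}lder's inequality to pass between the two. As in the proof of that lemma, it suffices to treat the case $s < t$, so that all the inner integrands are nonnegative and the absolute value signs around the inner integral may be dropped.

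First I would bound the inner integral by H\"{o}lder's inequality with exponent $p$ and its conjugate $q$: for each fixed $x$,
	\begin{equation*}
		\int_s^t |f(x + y)| \mspace{2mu} \mathrm{d}y
		\le (t - s)^{\frac{1}{q}} \left( \int_s^t |f(x + y)|^p \mspace{2mu} \mathrm{d}y \right)^{\frac{1}{p}}.
	\end{equation*}
Raising this to the $p$-th power and integrating over $x \in [a, b]$ gives
	\begin{equation*}
		\int_a^b \left| \int_s^t |f(x + y)| \mspace{2mu} \mathrm{d}y \right|^p \mspace{2mu} \mathrm{d}x
		\le (t - s)^{\frac{p}{q}} \int_a^b \int_s^t |f(x + y)|^p \mspace{2mu} \mathrm{d}y \mspace{2mu} \mathrm{d}x.
	\end{equation*}

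The key step is then to recognize that $|f|^p \in L^1(\mathbb{R}, \mathbb{R})$ with $\||f|^p\|_{L^1(\mathbb{R})} = \|f\|_{L^p(\mathbb{R})}^p$, so that Lemma~\ref{lem:double integral, small order, L^1} applied to the scalar function $|f|^p$ bounds the remaining double integral by $\|f\|_{L^p(\mathbb{R})}^p (t - s)$. Combining the two estimates yields
	\begin{equation*}
		\int_a^b \left| \int_s^t |f(x + y)| \mspace{2mu} \mathrm{d}y \right|^p \mspace{2mu} \mathrm{d}x
		\le (t - s)^{\frac{p}{q} + 1} \|f\|_{L^p(\mathbb{R})}^p,
	\end{equation*}
and since $\tfrac{1}{p} + \tfrac{1}{q} = 1$ forces $\tfrac{p}{q} + 1 = p$, taking $p$-th roots gives exactly the claimed inequality.

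There is no serious obstacle here: the argument is routine once one observes that applying $|\cdot|^p$ turns an $L^p$ function into an $L^1$ function, so the genuinely measure-theoretic content (the Tonelli bookkeeping on the region $A(t, s)$) has already been discharged in Lemma~\ref{lem:double integral, small order, L^1}. I would note in passing that for $p = 1$ the corollary is literally the lemma, and that an alternative proof bypassing Lemma~\ref{lem:double integral, small order, L^1} is available through Minkowski's integral inequality, which bounds the $L^p_x$-norm on $[a, b]$ of $y \mapsto |f(\cdot + y)|$ integrated over $[s, t]$ by $\int_s^t \||f(\cdot + y)|\|_{L^p[a, b]} \mspace{2mu} \mathrm{d}y \le (t - s)\|f\|_{L^p(\mathbb{R})}$, using that translation is an $L^p$-isometry on $\mathbb{R}$ and restriction to $[a, b]$ only decreases the norm.
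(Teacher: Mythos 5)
Your argument is correct and is essentially identical to the paper's own proof: the same H\"{o}lder estimate on the inner integral with exponents $p$ and $q$, followed by applying Lemma~\ref{lem:double integral, small order, L^1} to the scalar function $|f|^p \in L^1(\mathbb{R}, \mathbb{R})$ and using $\tfrac{p}{q} + 1 = p$. Your closing remark about the shortcut via Minkowski's integral inequality is a valid (and slightly slicker) alternative, but it is not the route the paper takes.
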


\begin{proof}
It is sufficient to consider the case $s < t$.
Let $q$ be the H\"{o}lder conjugate of $p$.
Then for each fixed $x$, we have
	\begin{equation*}
		\int_s^t |f(x + y)| \mspace{2mu} \mathrm{d}y
		\le \biggl( \int_s^t |f(x + y)|^p \mspace{2mu} \mathrm{d}y \biggr)^{\frac{1}{p}} \cdot (t - s)^{\frac{1}{q}}.
	\end{equation*}
Since $|f|^p \in L^1(\mathbb{R}, \mathbb{R})$, we obtain
	\begin{align*}
		\int_a^b
			\biggl( \int_s^t |f(x + y)| \mspace{2mu} \mathrm{d}y \biggr)^p
		\mspace{2mu} \mathrm{d}x
		&\le (t - s)^{\frac{p}{q}}
			\int_a^b \biggl( \int_s^t |f(x + y)|^p \mspace{2mu} \mathrm{d}y \biggr) \mspace{2mu} \mathrm{d}x \\
		&\le (t - s)^{\frac{p}{q}} \cdot \||f|^p\|_{L^1(\mathbb{R})} (t - s) \\
		&\le (t - s)^{\frac{p}{q} + 1} \|f\|_{L^p(\mathbb{R})}^p
	\end{align*}
by applying Lemma~\ref{lem:double integral, small order, L^1}.
Then the inequality is obtained because $(1/p) + (1/q) = 1$.
\end{proof}

\begin{theorem}\label{thm:differentiability of translation in L^p}
Let $1 \le p < \infty$, $f \in L^p(\mathbb{R}, \mathbb{R}^N)$, and $a < b$ be real numbers.
Then for all $s, t, u \in \mathbb{R}$,
	\begin{equation*}
		\biggl( \int_a^b
			\biggl| \int_s^t (f(x + y) - f(x + u)) \mspace{2mu} \mathrm{d}y \biggr|^p
		\mspace{2mu} \mathrm{d}x \biggr)^{\frac{1}{p}}
		= o(|t - s|)
	\end{equation*}
as $|t - s| \to 0$ uniformly in $u$ between $s$ and $t$.
\end{theorem}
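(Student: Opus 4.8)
The plan is to reduce to the case of a continuous function by density and then exploit uniform continuity, absorbing the approximation error into Corollary~\ref{cor:double integral, small order, L^p}. Fix $\varepsilon > 0$. Since $C_c(\mathbb{R}, \mathbb{R}^N)$ is dense in $L^p(\mathbb{R}, \mathbb{R}^N)$ for $1 \le p < \infty$, I would first choose $g \in C_c(\mathbb{R}, \mathbb{R}^N)$ with $\|f - g\|_{L^p(\mathbb{R})} < \varepsilon/4$. Writing $h := f - g$, I would then decompose the integrand pointwise as
	\begin{equation*}
		f(x+y) - f(x+u) = h(x+y) + \bigl( g(x+y) - g(x+u) \bigr) - h(x+u),
	\end{equation*}
so that, after integrating in $y$ over the interval from $s$ to $t$ and applying the Minkowski inequality in $L^p([a,b])$, the left-hand side is bounded by a sum of three contributions, two governed by $h$ and one by the continuous function $g$.

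For the two $h$-contributions I would invoke Corollary~\ref{cor:double integral, small order, L^p}. The term carrying $h(x+y)$ is dominated, since $|\int_s^t h(x+y)\,\mathrm{d}y| \le |\int_s^t |h(x+y)|\,\mathrm{d}y|$, by
	\begin{equation*}
		\biggl( \int_a^b \biggl| \int_s^t |h(x+y)| \mspace{2mu} \mathrm{d}y \biggr|^p \mathrm{d}x \biggr)^{\frac{1}{p}} \le \|h\|_{L^p(\mathbb{R})} |t - s|.
	\end{equation*}
The term carrying $h(x+u)$ is even simpler: the inner integrand does not depend on $y$, so the inner integral equals $(t-s)h(x+u)$, and a change of variables $z = x+u$ gives the bound $|t-s|\,\|h\|_{L^p(\mathbb{R})}$. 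Both contributions are therefore at most $(\varepsilon/4)|t-s|$, and crucially both are independent of $u$.

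For the middle contribution I would use that $g$, being continuous with compact support, is uniformly continuous with some modulus of continuity $\omega$. The decisive observation is that whenever $y$ and $u$ both lie between $s$ and $t$ one has $|y - u| \le |t - s|$, whence $|g(x+y) - g(x+u)| \le \omega(|t-s|)$ uniformly in $x$ and $u$; thus the inner integral is bounded in modulus by $|t-s|\,\omega(|t-s|)$, and its $L^p([a,b])$-norm by $(b-a)^{\frac{1}{p}} |t-s|\,\omega(|t-s|)$. Collecting the three estimates and dividing by $|t-s|$ yields
	\begin{equation*}
		\frac{1}{|t-s|} \biggl( \int_a^b \biggl| \int_s^t (f(x+y) - f(x+u)) \mspace{2mu} \mathrm{d}y \biggr|^p \mathrm{d}x \biggr)^{\frac{1}{p}} \le 2\|h\|_{L^p(\mathbb{R})} + (b-a)^{\frac{1}{p}} \omega(|t-s|).
	\end{equation*}

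Finally, since $\omega(|t-s|) \to 0$ as $|t-s| \to 0$, I would pick $\delta > 0$ so small that $(b-a)^{\frac{1}{p}}\omega(|t-s|) < \varepsilon/2$ for $|t-s| < \delta$; together with $2\|h\|_{L^p(\mathbb{R})} < \varepsilon/2$ this bounds the right-hand side by $\varepsilon$ whenever $|t-s| < \delta$, with no dependence on $u$. The main obstacle is exactly securing this uniformity in $u$: any attempt to control $g(x+y)-g(x+u)$ by the modulus over an interval larger than the one joining $s$ and $t$ would lose uniformity, but the hypothesis that $u$ lies between $s$ and $t$ keeps $|y-u| \le |t-s|$ throughout, so uniform continuity of $g$ delivers the estimate uniformly, and the $h$-terms are $u$-free after the change of variables. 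This establishes the claimed $o(|t-s|)$ behaviour uniformly in $u$.
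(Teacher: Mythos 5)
Your proposal is correct and takes essentially the same route as the paper's proof: approximate $f$ in $L^p$ by $g \in C_\mathrm{c}(\mathbb{R}, \mathbb{R}^N)$, split into three terms via the Minkowski inequality, bound the two $f - g$ terms by Corollary~\ref{cor:double integral, small order, L^p} and a direct computation respectively, and control the middle term by the uniform continuity of $g$ together with the key observation that $|y - u| \le |t - s|$ when $y$ and $u$ both lie between $s$ and $t$, which is exactly where the uniformity in $u$ comes from. The only cosmetic differences are your use of a modulus of continuity $\omega$ and an $\varepsilon/4$ split in place of the paper's fixed $\delta$ and $\varepsilon/3$ split.
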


\begin{proof}
Let
	\begin{equation*}
		F(x; s, t, u) := \int_s^t (f(x + y) - f(x + u)) \mspace{2mu} \mathrm{d}y.
	\end{equation*}
Then for each fixed $x \in [a, b]$, we have
	\begin{align*}
		F(x; s, t, u)
		&= \int_s^t f(x + y) \mspace{2mu} \mathrm{d}y - (t - s)f(x + u) \\
		&= \int_{s + x}^{t + x} f(y) \mspace{2mu} \mathrm{d}y - (t - s)f(x + u),
	\end{align*}
which is Lebesgue measurable in $x$.

Let $\ep > 0$ be given.
We choose $g \in C_\mathrm{c}(\mathbb{R}, \mathbb{R}^N)$ so that
	\begin{equation*}
		\|f - g\|_{L^p(\mathbb{R})} \le \frac{\ep}{3}.
	\end{equation*}
Here $C_\mathrm{c}(\mathbb{R}, \mathbb{R}^N)$ denotes the set of continuous functions
from $\mathbb{R}$ to $\mathbb{R}^N$ with compact support.
By the Minkowski inequality, 
	\begin{align*}
		&\|F(\cdot; s, t, u)\|_{L^p[a, b]} \\
		&\le
		\biggl( \int_a^b
			\biggl| \int_s^t (f(x + y) - g(x + y)) \mspace{2mu} \mathrm{d}y \biggr|^p
		\mspace{2mu} \mathrm{d}x \biggr)^{\frac{1}{p}} \\
		&\mspace{40mu} +
		\biggl( \int_a^b
			\biggl| \int_s^t (g(x + y) - g(x + u)) \mspace{2mu} \mathrm{d}y \biggr|^p
		\mspace{2mu} \mathrm{d}x \biggr)^{\frac{1}{p}} \\
		&\mspace{80mu} +
		\biggl( \int_a^b
			\biggl| \int_s^t (g(x + u) - f(x + u)) \mspace{2mu} \mathrm{d}y \biggr|^p
		\mspace{2mu} \mathrm{d}x \biggr)^{\frac{1}{p}}.
	\end{align*}

\textbf{First term.} By applying Corollary~\ref{cor:double integral, small order, L^p}, we obtain
	\begin{align*}
		&\biggl( \int_a^b
			\biggl| \int_s^t (f(x + y) - g(x + y)) \mspace{2mu} \mathrm{d}y \biggr|^p
		\mspace{2mu} \mathrm{d}x \biggr)^{\frac{1}{p}} \\
		&\le
		\biggl( \int_a^b
			\biggl| \int_s^t |f(x + y) - g(x + y)| \mspace{2mu} \mathrm{d}y \biggr|^p
		\mspace{2mu} \mathrm{d}x \biggr)^{\frac{1}{p}} \\
		&\le \|f - g\|_{L^p(\mathbb{R})} |t - s|.
	\end{align*}

\textbf{Second term.} Since $g$ is uniformly continuous, there is $\delta > 0$ such that
for all $x, y, u$, $|y - u| \le \delta$ implies
	\begin{equation*}
		 |g(x + y) - g(x + u)| \le \frac{\ep}{3(b - a)^{1/p}}.
	\end{equation*}
Therefore, $|t - s| \le \delta$ implies
	\begin{equation*}
		\biggl| \int_s^t |g(x + y) - g(x + u)| \mspace{2mu} \mathrm{d}y \biggr|^p
		\le \left[ \frac{\ep}{3(b - a)^{1/p}} |t - s| \right]^p
	\end{equation*}
uniformly in $u$ between $s$ and $t$.
Thus,
	\begin{align*}
		&\biggl( \int_a^b
			\biggl| \int_s^t (g(x + y) - g(x + u)) \mspace{2mu} \mathrm{d}y \biggr|^p
		\mspace{2mu} \mathrm{d}x \biggr)^{\frac{1}{p}} \\
		&\le
		\biggl( \int_a^b
			\biggl| \int_s^t |g(x + y) - g(x + u)| \mspace{2mu} \mathrm{d}y \biggr|^p
		\mspace{2mu} \mathrm{d}x \biggr)^{\frac{1}{p}} \\
		&\le \left\{ \left[ \frac{\ep}{3(b - a)^{1/p}} |t - s| \right]^p (b - a) \right\}^{\frac{1}{p}} \\
		&\le \frac{\ep}{3} |t - s|.
	\end{align*}

\textbf{Third term.} We have
	\begin{align*}
		&\biggl( \int_a^b
			\biggl| \int_s^t (g(x + u) - f(x + u)) \mspace{2mu} \mathrm{d}y \biggr|^p
		\mspace{2mu} \mathrm{d}x \biggr)^{\frac{1}{p}} \\
		&\le
		\biggl( \int_a^b
			\biggl| \int_s^t |g(x + u) - f(x + u)| \mspace{2mu} \mathrm{d}y \biggr|^p
		\mspace{2mu} \mathrm{d}x \biggr)^{\frac{1}{p}} \\
		&= \biggl( |t - s|^p \int_a^b |g(x + u) - f(x + u)|^p \mspace{2mu} \mathrm{d}x \biggr)^{\frac{1}{p}} \\
		&\le \|f - g\|_{L^p(\mathbb{R})} |t - s|.
	\end{align*}

By combining the above estimates, we finally obtain
	\begin{equation*}
		\biggl( \int_a^b
			\biggl| \int_s^t (f(x + y) - f(x + u)) \mspace{2mu} \mathrm{d}y \biggr|^p
		\mspace{2mu} \mathrm{d}x \biggr)^{\frac{1}{p}}
		\le \ep |t - s|
	\end{equation*}
for all $|t - s| \le \delta$ uniformly in $u$ between $s$ and $t$.
\end{proof}

\begin{corollary}\label{cor:differentiability of translation in L^p}
Let $1 \le p < \infty$.
Let $a < b$ and $c, d \ge 0$ be given real numbers.
If $f \in \mathcal{W}^{1, p}([a - c, b + d], \mathbb{R}^N)$, then for all $s, t, u \in [-c, d]$,
	\begin{equation*}
		\biggl( \int_a^b
			| f(x + t) - f(x + s) - (t - s)f'(x + u) |^p
		\mspace{2mu} \mathrm{d}x \biggr)^{\frac{1}{p}}
		= o(|t - s|)
	\end{equation*}
as $|t - s| \to 0$ uniformly in $u$ between $s$ and $t$.
\end{corollary}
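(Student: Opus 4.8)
The plan is to reduce the statement to Theorem~\ref{thm:differentiability of translation in L^p} applied to the derivative $f'$, the bridge being the fundamental theorem of calculus. Since $f$ is absolutely continuous on $[a - c, b + d]$, for each fixed $x \in [a, b]$ the translate $y \mapsto f(x + y)$ is absolutely continuous on $[-c, d]$ with almost everywhere derivative $y \mapsto f'(x + y)$, so that
\[
	f(x + t) - f(x + s) = \int_s^t f'(x + y) \mspace{2mu} \mathrm{d}y
\]
for all $s, t \in [-c, d]$. Combining this with the identity $(t - s)f'(x + u) = \int_s^t f'(x + u) \mspace{2mu} \mathrm{d}y$, in which the integrand is constant in $y$, yields
\[
	f(x + t) - f(x + s) - (t - s)f'(x + u) = \int_s^t \bigl( f'(x + y) - f'(x + u) \bigr) \mspace{2mu} \mathrm{d}y.
\]

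Next I would extend $f'$ to a function $g \in L^p(\mathbb{R}, \mathbb{R}^N)$, for instance by setting $g = f'$ on $[a - c, b + d]$ and $g = 0$ elsewhere; this is legitimate precisely because $f' \in L^p([a - c, b + d], \mathbb{R}^N)$. The role of the restrictions $s, t, u \in [-c, d]$ is exactly to guarantee that the shifted arguments never leave the interval where $f'$ is defined: for $x \in [a, b]$ and any $y$ between $s$ and $t$ (hence $y \in [-c, d]$) one has $x + y \in [a - c, b + d]$, and likewise $x + u \in [a - c, b + d]$. Consequently $f'(x + y) = g(x + y)$ and $f'(x + u) = g(x + u)$ throughout the region of integration, so substituting the identity above gives
\[
	\int_a^b |f(x + t) - f(x + s) - (t - s)f'(x + u)|^p \mspace{2mu} \mathrm{d}x
	= \int_a^b \biggl| \int_s^t \bigl( g(x + y) - g(x + u) \bigr) \mspace{2mu} \mathrm{d}y \biggr|^p \mspace{2mu} \mathrm{d}x.
\]

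Finally I would apply Theorem~\ref{thm:differentiability of translation in L^p} to $g \in L^p(\mathbb{R}, \mathbb{R}^N)$, which asserts precisely that the $p$-th root of the right-hand side is $o(|t - s|)$ as $|t - s| \to 0$, uniformly in $u$ between $s$ and $t$; this is the desired conclusion. The proof is essentially immediate once the reduction is seen, so there is no serious obstacle; the only step demanding care is the domain bookkeeping of the middle paragraph, namely confirming that the shifts $x + y$ and $x + u$ stay inside $[a - c, b + d]$ so that passing from $f'$ to its extension $g$ leaves the integrals unchanged. With that verified, the estimate follows directly from the theorem.
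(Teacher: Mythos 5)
Your proof is correct and follows essentially the same route as the paper: reduce via the fundamental theorem of calculus for absolutely continuous functions to the identity $f(x + t) - f(x + s) - (t - s)f'(x + u) = \int_s^t (f'(x + y) - f'(x + u)) \mspace{2mu} \mathrm{d}y$, extend $f'$ by zero outside $[a - c, b + d]$, and apply Theorem~\ref{thm:differentiability of translation in L^p}. Your explicit verification that the shifted arguments stay inside $[a - c, b + d]$ is a detail the paper leaves implicit, but the argument is identical in substance.
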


\begin{proof}
Let $x \in [a, b]$ and $s, t \in [-c, d]$.
By the fundamental theorem of calculus for absolutely continuous functions, we have
	\begin{equation*}
		f(x + t) - f(x + s) = \int_s^t f'(x + y) \mspace{2mu} \mathrm{d}y,
	\end{equation*}
which implies that for all $u$ between $s$ and $t$,
	\begin{equation*}
		f(x + t) - f(x + s) - (t - s)f'(x + u)
		= \int_s^t (f'(x + y) - f'(x + u)) \mspace{2mu} \mathrm{d}y.
	\end{equation*}
Therefore, the conclusion is obtained applying Theorem~\ref{thm:differentiability of translation in L^p}
for the extension of $f' \in L^p([a - c, b + d], \mathbb{R}^N)$ by $0$ outside $[a - c, b + d]$.
\end{proof}

\begin{remark}
The similar statement is given in \cite[Exercise 8.13 in Chapter 8]{Brezis 2011}.
\end{remark}

\section{$C^1$-uniform contraction theorem}\label{sec:C^1-uniform contraction theorem}

We first define the Fr\'{e}chet differentiability of functions whose domain of definitions are not necessarily open.

\begin{notation}
For normed spaces $X$ and $Y$,
the set of all bounded linear operators from $X$ to $Y$ is denoted by $\mathcal{L}(X, Y)$.
It is a Banach space with the operator norm defined by
	\begin{equation*}
		\|A\| := \sup\{\mspace{2mu} \|Ax\| : \text{$x \in X$, $\|x\| \le 1$} \mspace{2mu}\}
	\end{equation*}
for any $A \in \mathcal{L}(X, Y)$.
\end{notation}

\begin{definition}[Fr\'{e}chet differentiability]\label{dfn:Frechet differentiability}
Let $X, Y$ be normed spaces, $U \subset X$ be a subset, $x_0 \in U$ be a limit point of $U$,
and $f \colon U \to Y$ be a function.
$f$ is said to be \textit{Fr\'{e}chet differentiable} at $x_0$
if there exists a unique $A \in \mathcal{L}(X, Y)$ such that
	\begin{equation}\label{eq:Frechet differentiability}
		\lim_{\text{$\|x - x_0\| \to 0$ in $U$}} \frac{\|f(x) - f(x_0) - A(x - x_0)\|}{\|x - x_0\|} = 0.
	\end{equation}
The above $A$ is called the \textit{Fr\'{e}chet derivative} of $f$ at $x_0$ and is denoted by $Df(x_0)$.
$f$ is said to be \textit{Fr\'{e}chet differentiable}
when $U$ is contained in the set of all limit points of $U$ and $f$ is Fr\'{e}chet differentiable at every $x_0 \in U$.
\end{definition}

The above $A$ is a linear approximation of $f$ at $x_0$.
When a linear approximation of $f$ at $x_0$ is unique and continuous,
we say that $f$ is Fr\'{e}chet differentiable at $x_0$.

\begin{remark}
\eqref{eq:Frechet differentiability} is equivalent to the following condition:
For every $\ep > 0$, there exists $\delta > 0$ such that for all $x \in U$, $\|x - x_0\| \le \delta$ implies
	\begin{equation*}
		\|f(x) - f(x_0) - A(x - x_0)\| \le \ep\|x - x_0\|.
	\end{equation*}
We note that the both sides are equal to $0$ when $x = x_0$.
The above property is written as
	\begin{equation*}
		\|f(x) - f(x_0) - A(x - x_0)\| = o(\|x - x_0\|)
	\end{equation*}
as $\|x - x_0\| \to 0$ in $U$.
\end{remark}

\begin{definition}[Continuous Fr\'{e}chet differentiability]
Let $X, Y$ be normed spaces, $U \subset X$ be a subset contained in the set of all limit points of $U$,
and $f \colon U \to Y$ be a function.
$f$ is said to be \textit{continuously Fr\'{e}chet differentiable}
if $f$ is Fr\'{e}chet differentiable and $Df \colon U \to \mathcal{L}(X, Y)$ is a continuous map.
\end{definition}

The following is a basic result about the uniqueness of a linear approximation.
We omit the proof.

\begin{lemma}\label{lem:uniqueness of a linear approximation}
Let $X, Y$ be normed spaces, $U \subset X$ be an open subset, $x_0 \in U$, and $f \colon U \to Y$ be a function.
Suppose that there exist linear maps $A, B \colon X \to Y$ such that
	\begin{align*}
		\|f(x) - f(x_0) - A(x - x_0)\| &= o(\|x - x_0\|), \\
		\|f(x) - f(x_0) - B(x - x_0)\| &= o(\|x - x_0\|)
	\end{align*}
as $\|x - x_0\| \to 0$ in $U$.
Then $A = B$.
\end{lemma}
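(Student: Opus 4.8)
The plan is to reduce the assertion to showing that the single linear map $C := A - B$ annihilates every vector of $X$, and to exploit the openness of $U$ so that $x_0$ can be approached along an arbitrary ray. First I would subtract the two hypotheses: by the triangle inequality,
\begin{align*}
\|C(x - x_0)\|
&= \|(A - B)(x - x_0)\| \\
&\le \|f(x) - f(x_0) - A(x - x_0)\| + \|f(x) - f(x_0) - B(x - x_0)\|,
\end{align*}
so that $\|C(x - x_0)\| = o(\|x - x_0\|)$ as $\|x - x_0\| \to 0$ in $U$. This converts the problem into a statement purely about the linear map $C$: a linear map whose values on $x - x_0$ vanish faster than $\|x - x_0\|$ near $x_0$ must be identically zero, provided we are permitted to test it along every direction.

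Next I would use that $U$ is open to choose $\rho > 0$ with the open ball of radius $\rho$ about $x_0$ contained in $U$. Fixing an arbitrary nonzero $v \in X$, the points $x = x_0 + tv$ then lie in $U$ for all sufficiently small $t > 0$, and $\|x - x_0\| = t\|v\| \to 0$ as $t \to 0^+$. By linearity $C(x - x_0) = t\,Cv$, hence $\|C(x - x_0)\| = t\|Cv\|$. Feeding this into the little-$o$ estimate — for every $\ep > 0$ there is $\delta > 0$ with $\|C(x - x_0)\| \le \ep\|x - x_0\|$ whenever $\|x - x_0\| \le \delta$ — gives $t\|Cv\| \le \ep\, t\|v\|$ for all small $t > 0$, i.e.\ $\|Cv\| \le \ep\|v\|$. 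Letting $\ep \downarrow 0$ forces $Cv = 0$.

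Since $v$ was an arbitrary nonzero vector and $C0 = 0$ holds trivially by linearity, this yields $C = 0$, that is $A = B$, as claimed. The computation is elementary, so I do not expect a genuine obstacle; the only point that carries any weight is the openness of $U$, which is precisely what allows $x_0$ to be approached from every direction. Without it one could probe only those directions pointing into $U$, and two distinct linear maps agreeing on such directions could each serve as a linear approximation — this is exactly why, in Definition~\ref{dfn:Frechet differentiability}, uniqueness of the approximating operator must be imposed by hand when $\dom f$ is not open.
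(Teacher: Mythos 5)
Your proof is correct, and it takes the approach the paper itself intends: the paper omits the proof of Lemma~\ref{lem:uniqueness of a linear approximation} as "basic," but your argument (set $C = A - B$, obtain $\|C(x - x_0)\| = o(\|x - x_0\|)$ by the triangle inequality, use openness to test along rays $x_0 + tv$, and let $\ep \downarrow 0$ to get $Cv = 0$) is precisely the $\ep$--$\delta$ technique the paper uses to prove the neighboring Theorem~\ref{thm:uniqueness of Frechet derivative, basis}, where basis directions replace your arbitrary $v$ because openness is unavailable there. Your closing observation about why openness is essential, and why uniqueness must be imposed by hand in Definition~\ref{dfn:Frechet differentiability} when the domain is not open, is likewise accurate.
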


\begin{theorem}\label{thm:uniqueness of Frechet derivative, basis}
Let $N \ge 1$ be an integer, $Y$ be a normed space, $U \subset \R^N$ be a subset,
$x_0 \in U$ be a limit point of $U$, and $f \colon U \to Y$ be a function.
Suppose that there exist linear maps $A, B \colon \R^N \to Y$ such that
	\begin{align*}
		\|f(x) - f(x_0) - A(x - x_0)\| &= o(|x - x_0|), \\
		\|f(x) - f(x_0) - B(x - x_0)\| &= o(|x - x_0|)
	\end{align*}
as $|x - x_0| \to 0$ in $U$.
If there exists a basis $(v_1, \dots, v_N)$ of $\R^N$ such that
	\begin{equation*}
		\{\mspace{2mu} x_0 + a_1v_1 + \dots + a_Nv_N : a_i \in [0, 1] \mspace{2mu}\} \subset U,
	\end{equation*}
then $A = B$.
\end{theorem}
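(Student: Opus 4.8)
The plan is to reduce the claim to showing that the linear map $C := A - B$ vanishes identically. Subtracting the two hypotheses and using the triangle inequality gives
\begin{align*}
	\|C(x - x_0)\|
	&\le \|f(x) - f(x_0) - A(x - x_0)\| + \|f(x) - f(x_0) - B(x - x_0)\| \\
	&= o(|x - x_0|)
\end{align*}
as $|x - x_0| \to 0$ in $U$. Since $C$ is linear and $(v_1, \dots, v_N)$ is a basis of $\R^N$, it suffices to prove $Cv_i = 0$ for each $i \in \{1, \dots, N\}$; this yields $C = 0$ and hence $A = B$.

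First I would extract from the parallelepiped hypothesis the fact that each coordinate segment lies in $U$: setting $a_j = 0$ for all $j \ne i$ shows that $\{\mspace{2mu} x_0 + t v_i : t \in [0, 1] \mspace{2mu}\} \subset U$. Then I would test the little-$o$ estimate along this segment. For $x = x_0 + t v_i$ with $t \in (0, 1]$ one has $x - x_0 = t v_i$, so by linearity $\|C(x - x_0)\| = t \|C v_i\|$ while $|x - x_0| = t |v_i|$. Spelling out the little-$o$ condition, for every $\ep > 0$ there is $\delta > 0$ such that $t |v_i| \le \delta$ forces $t \|C v_i\| \le \ep \mspace{2mu} t |v_i|$, i.e.\ $\|C v_i\| \le \ep |v_i|$. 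As $\ep > 0$ is arbitrary and the left-hand side is independent of $t$, this gives $\|C v_i\| = 0$, hence $C v_i = 0$.

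The point of the argument --- and the only place where the geometric hypothesis enters --- is that $U$ need not be open, so one cannot approach $x_0$ from an arbitrary direction as in the open-domain Lemma~\ref{lem:uniqueness of a linear approximation}. The parallelepiped assumption supplies admissible directions: the $N$ coordinate segments along $v_1, \dots, v_N$ stay inside $U$ and accumulate at $x_0$, which simultaneously guarantees that $x_0$ is a limit point and legitimizes the directional computation above. The essential observation is that the ratio $\|C(t v_i)\| / |t v_i| = \|C v_i\| / |v_i|$ is \emph{constant} in $t$, so its vanishing as $t \to 0^+$ can only mean that the constant is already zero. There is no real analytic difficulty here; once $C v_i = 0$ is established for each vector of a basis, linearity of $C$ closes the proof.
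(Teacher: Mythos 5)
Your proof is correct and follows essentially the same route as the paper's: both bound $\|(A - B)(x - x_0)\|$ by $\ep|x - x_0|$ via the triangle inequality, then test along the segments $x = x_0 + t v_i$ (guaranteed to lie in $U$ by the parallelepiped hypothesis) to get $\|(A - B)v_i\| \le \ep|v_i|$, hence $(A - B)v_i = 0$ for each basis vector. Your remark that the ratio is constant in $t$ is a nice way of phrasing the same cancellation the paper performs by dividing out $|h|$.
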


\begin{proof}
Let $\ep > 0$ be given.
The assumption implies that there is $\delta > 0$ such that for all $x \in U$, $|x - x_0| \le \delta$ implies
	\begin{align*}
		\|f(x) - f(x_0) - A(x - x_0)\| &\le \frac{\ep}{2} |x - x_0|, \\
		\|f(x) - f(x_0) - B(x - x_0)\| &\le \frac{\ep}{2} |x - x_0|,
	\end{align*}
both of which show
	\begin{align*}
		\|(A - B)(x - x_0)\|
		&\le \|A(x - x_0) - (f(x) - f(x_0))\| + \|(f(x) - f(x_0)) - B(x - x_0)\| \\
		&\le \frac{\ep}{2} |x - x_0| + \frac{\ep}{2} |x - x_0| \\
		&= \ep|x - x_0|.
	\end{align*}
Fix $i \in \{1, \dots, N\}$.
Then for all $h \in \R$, $0 < |h| \le \min\{\delta/|v_i|, 1\}$ implies
	\begin{equation*}
		\|(A - B)hv_i\| \le \ep |h||v_i|.
	\end{equation*}
This means $\|(A - B)v_i\| \le \ep|v_i|$, and we have $(A - B)v_i = 0$ since $\ep > 0$ is arbitrary.
This holds for each fixed $i$, and therefore,
	\begin{equation*}
		A = B
	\end{equation*}
because $(v_1, \dots, v_N)$ is a basis of $\R^N$.
\end{proof}

\begin{corollary}\label{cor:uniqueness of a linear approximation}
Let $X, Y$ be normed spaces, $N \ge 1$ be an integer, $U \subset X$ be an open subset,
$V \subset \R^N$ be a subset, $x_0 \in U$, $y_0 \in V$ be a limit point of $V$,
and $f \colon U \times V \to Y$ be a function.
Suppose that there exist linear maps $A, B  \colon X \times \R^N \to Y$ such that
	\begin{align*}
		\|f(x, y) - f(x_0, y_0) - A(x - x_0, y - y_0)\| &= o(\|x - x_0\| + |y - y_0|), \\
		\|f(x, y) - f(x_0, y_0) - B(x - x_0, y - y_0)\| &= o(\|x - x_0\| + |y - y_0|)
	\end{align*}
as $\|x - x_0\| + |y - y_0| \to 0$ in $U \times V$.
If there exists a basis $(v_1, \dots, v_N)$ of $\R^N$ such that
	\begin{equation*}
		\{\mspace{2mu} y_0 + a_1v_1 + \dots + a_Nv_N : a_i \in [0, 1] \mspace{2mu}\} \subset V,
	\end{equation*}
then $A = B$.
\end{corollary}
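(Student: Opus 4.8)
The plan is to show that the linear map $C := A - B \colon X \times \mathbb{R}^N \to Y$ vanishes identically. Since $X \times \mathbb{R}^N$ is the direct sum of $X \times \{0\}$ and $\{0\} \times \mathbb{R}^N$, linearity reduces the task to proving that $C$ vanishes on each of these two factors separately: once $C(h, 0) = 0$ and $C(0, k) = 0$ are known for all $h \in X$ and $k \in \mathbb{R}^N$, we get $C(h, k) = C(h, 0) + C(0, k) = 0$ for every $(h, k)$. The idea is therefore to freeze one variable at a time and recognize that each restriction is governed by one of the two uniqueness results already established.

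First I would freeze the second variable at $y = y_0$. Since $y_0 \in V$ and $U$ is open, the point $(x, y_0)$ lies in $U \times V$ for all $x \in U$, and there $\|x - x_0\| + |y_0 - y_0| = \|x - x_0\|$. Restricting the two hypotheses to this slice shows that the linear maps $h \mapsto A(h, 0)$ and $h \mapsto B(h, 0)$ are both linear approximations of the slice function $x \mapsto f(x, y_0)$ at $x_0$, in the sense that
\[
\|f(x, y_0) - f(x_0, y_0) - A(x - x_0, 0)\| = o(\|x - x_0\|)
\]
as $\|x - x_0\| \to 0$ in $U$, and likewise for $B$. Because $U$ is open, Lemma~\ref{lem:uniqueness of a linear approximation} applies directly and yields $A(h, 0) = B(h, 0)$ for all $h \in X$; that is, $C$ vanishes on $X \times \{0\}$.

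Next I would freeze the first variable at $x = x_0$. Since $x_0 \in U$, the point $(x_0, y)$ lies in $U \times V$ for every $y \in V$, and there $\|x_0 - x_0\| + |y - y_0| = |y - y_0|$. Restricting the hypotheses to this slice shows that the linear maps $k \mapsto A(0, k)$ and $k \mapsto B(0, k)$ are both linear approximations of the slice function $y \mapsto f(x_0, y)$ at $y_0$ over the domain $V \subset \mathbb{R}^N$. Because $V$ need not be open, Lemma~\ref{lem:uniqueness of a linear approximation} is no longer available, and this is the one genuinely delicate point of the argument: the difference quotient can only be probed from within $V$. It is resolved exactly by the parallelepiped hypothesis. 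Since $y_0$ is a limit point of $V$ and the set $\{y_0 + a_1 v_1 + \cdots + a_N v_N : a_i \in [0,1]\}$ is contained in $V$ for the basis $(v_1, \ldots, v_N)$, Theorem~\ref{thm:uniqueness of Frechet derivative, basis} applies to this slice and gives $A(0, k) = B(0, k)$ for all $k \in \mathbb{R}^N$; that is, $C$ vanishes on $\{0\} \times \mathbb{R}^N$.

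Combining the two factors, $C = A - B$ vanishes on all of $X \times \mathbb{R}^N$ by linearity, so $A = B$, as claimed. The only nontrivial step, as noted, is the $\mathbb{R}^N$-direction, where the non-openness of $V$ defeats the standard uniqueness lemma and the parallelepiped condition must be invoked to guarantee that $y_0$ can be approached along enough independent directions to pin down the derivative; every remaining step is a routine restriction to a coordinate slice.
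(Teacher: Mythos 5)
Your proposal is correct and follows essentially the same route as the paper: the paper likewise restricts to the slices $y = y_0$ and $x = x_0$, applies Lemma~\ref{lem:uniqueness of a linear approximation} (using openness of $U$) and Theorem~\ref{thm:uniqueness of Frechet derivative, basis} (using the parallelepiped condition in $V$) to get $A(u,0) = B(u,0)$ and $A(0,v) = B(0,v)$, and then concludes $A = B$ by linearity. Your additional commentary on why the non-openness of $V$ forces the basis condition is accurate but not needed beyond what the paper records.
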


\begin{proof}
By combining Lemma~\ref{lem:uniqueness of a linear approximation}
and Theorem~\ref{thm:uniqueness of Frechet derivative, basis}, we have
	\begin{equation*}
		A(u, 0) = B(u, 0) \mspace{10mu} \text{and} \mspace{10mu} A(0, v) = B(0, v)
	\end{equation*}
for all $u \in X$ and all $v \in \R^N$.
By the linearity,
	\begin{equation*}
		A(u, v)
		= A(u, 0) + A(0, v)
		= B(u, 0) + B(0, v)
		= B(u, v)
	\end{equation*}
holds for all $(u, v) \in X \times \R^N$, which means $A = B$.
\end{proof}

\begin{lemma}\label{lem:linear approximation and continuity}
Let $X, Y$ be normed spaces, $U \subset X$ be a subset, $x_0 \in U$ be a limit point of $U$,
and $f \colon U \to Y$ be a function.
If there exists $A \in \mathcal{L}(X, Y)$ such that
	\begin{equation*}
		\|f(x) - f(x_0) - A(x - x_0)\| = o(\|x - x_0\|)
	\end{equation*}
as $\|x - x_0\| \to 0$ in $U$, then there exist positive numbers $\delta, L$ such that
for all $x \in U$, $\|x - x_0\| \le \delta$ implies
	\begin{equation*}
		\|f(x) - f(x_0)\| \le L\|x - x_0\|.
	\end{equation*}
\end{lemma}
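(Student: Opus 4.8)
The plan is to extract a local Lipschitz-type bound directly from the little-$o$ hypothesis, with no auxiliary machinery needed. First I would invoke the characterization of $o(\|x - x_0\|)$ recorded in the remark following Definition~\ref{dfn:Frechet differentiability}: applying it with the specific value $\ep = 1$ produces a $\delta > 0$ such that for every $x \in U$ with $\|x - x_0\| \le \delta$ one has
	\begin{equation*}
		\|f(x) - f(x_0) - A(x - x_0)\| \le \|x - x_0\|.
	\end{equation*}
Here the choice $\ep = 1$ is arbitrary; any fixed positive constant would do and merely changes the final constant $L$.

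Next I would combine this with the boundedness of $A$. Since $A \in \mathcal{L}(X, Y)$, the estimate $\|A(x - x_0)\| \le \|A\| \, \|x - x_0\|$ holds, so by the triangle inequality, for all $x \in U$ with $\|x - x_0\| \le \delta$,
	\begin{align*}
		\|f(x) - f(x_0)\|
		&\le \|f(x) - f(x_0) - A(x - x_0)\| + \|A(x - x_0)\| \\
		&\le \|x - x_0\| + \|A\| \, \|x - x_0\| \\
		&= (1 + \|A\|) \|x - x_0\|.
	\end{align*}
Setting $L := 1 + \|A\|$ then yields the desired conclusion for this $\delta$ and $L$, both of which are positive.

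There is no real obstacle here: the argument is a one-step triangle-inequality estimate, and the only point worth noting is that the inequality is trivially valid at $x = x_0$, where both sides vanish, so no separate treatment of that case is required. The essential content is simply that Fr\'{e}chet differentiability at a point forces local Lipschitz continuity of $f$ relative to $x_0$, with the Lipschitz constant controlled by the operator norm of the derivative candidate $A$.
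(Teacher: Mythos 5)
Your proposal is correct and follows essentially the same argument as the paper: apply the $\ep$--$\delta$ characterization of the little-$o$ condition (with $\ep = 1$, exactly as the paper does despite its opening ``Let $\ep > 0$''), then use the triangle inequality and the boundedness of $A$ to obtain $\|f(x) - f(x_0)\| \le (1 + \|A\|)\|x - x_0\|$ for $\|x - x_0\| \le \delta$. The constant $L = 1 + \|A\|$ and the observation about $x = x_0$ match the paper's proof precisely.
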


\begin{remark}
The conclusion does not mean the local Lipschitz continuity of $f$ at $x_0$
but means the continuity of $f$ at $x_0$. 
\end{remark}

\begin{proof}
Let $\ep > 0$.
By the assumption, there exists $\delta > 0$ such that for all $x \in U$, $\|x - x_0\| \le \delta$ implies
	\begin{equation*}
		\|f(x) - f(x_0) - A(x - x_0)\| \le \|x - x_0\|.
	\end{equation*}
Therefore, we have
	\begin{align*}
		\|f(x) - f(x_0)\|
		&\le \|f(x) - f(x_0) - A(x - x_0)\| + \|A(x - x_0)\| \\
		&\le (1 + \|A\|)\|x - x_0\|
	\end{align*}
for all $x \in U$ satisfying $\|x - x_0\| \le \delta$.
\end{proof}

\begin{lemma}\label{lem:Frechet differentiability and Lipschitz continuity}
Let $X, Y$ be normed spaces, $U \subset X$ be an open subset, $x_0 \in U$, and $f \colon U \to Y$ be a function.
Suppose that there exists a linear map $A \colon X \to Y$ such that
	\begin{equation*}
		\|f(x) - f(x_0) - A(x - x_0)\| = o(\|x - x_0\|)
	\end{equation*}
as $\|x - x_0\| \to 0$.
If there exists $L > 0$ such that
	\begin{equation*}
		\|f(x_1) - f(x_2)\| \le L\|x_1 - x_2\|
	\end{equation*}
for all $x_1, x_2 \in U$, then $A \in \mathcal{L}(X, Y)$ and $\|A\| \le L$.
\end{lemma}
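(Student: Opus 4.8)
The plan is to test the linear map $A$ along each individual direction $v \in X$, combining the little-$o$ approximation hypothesis with the global Lipschitz estimate to bound $\|Av\|$. The openness of $U$ is precisely what allows me to approach $x_0$ along a straight ray while staying in the domain.

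First I would fix $v \in X$ with $v \neq 0$ (the case $v = 0$ being trivial, since $A0 = 0$). Because $U$ is open and $x_0 \in U$, there is $\eta > 0$ such that $x_0 + hv \in U$ whenever $0 < h \le \eta$. For such $h$, the triangle inequality gives
	\begin{equation*}
		\|A(hv)\|
		\le \|f(x_0 + hv) - f(x_0)\|
		+ \|f(x_0 + hv) - f(x_0) - A(hv)\|.
	\end{equation*}
I would then estimate the two terms separately. The Lipschitz hypothesis yields $\|f(x_0 + hv) - f(x_0)\| \le L\|hv\| = Lh\|v\|$, and the little-$o$ hypothesis, applied along the points $x = x_0 + hv$ for which $\|x - x_0\| = h\|v\| \to 0$, gives $\|f(x_0 + hv) - f(x_0) - A(hv)\| = o(h\|v\|)$ as $h \to 0^+$. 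Using linearity in the form $A(hv) = hAv$ and dividing the displayed inequality by $h > 0$, I obtain $\|Av\| \le L\|v\| + o(1)$ as $h \to 0^+$.

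Letting $h \to 0^+$ then forces $\|Av\| \le L\|v\|$. Since $v \in X$ is arbitrary, this simultaneously shows that $A$ is a bounded operator, so $A \in \mathcal{L}(X, Y)$, and that
	\begin{equation*}
		\|A\| = \sup\{\mspace{2mu} \|Av\| : \|v\| \le 1 \mspace{2mu}\} \le L,
	\end{equation*}
as claimed.

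I do not expect a genuine obstacle here, as the argument is elementary. The only points demanding care are conceptual rather than technical: since $A$ is a priori assumed only to be linear and \emph{not} continuous, its boundedness must be derived rather than taken for granted, and the openness of $U$ is essential — it is what guarantees that the ray $\{x_0 + hv : 0 < h \le \eta\}$ remains inside the domain so that the little-$o$ approximation can legitimately be invoked along it.
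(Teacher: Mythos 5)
Your proof is correct and takes essentially the same approach as the paper's: both combine the little-$o$ approximation with the Lipschitz bound via the triangle inequality and then use the linearity (homogeneity) of $A$ to conclude $\|Av\| \le L\|v\|$ for all $v$. The only cosmetic difference is that you pass to the limit along rays $x_0 + hv$ with $h \to 0^+$, whereas the paper fixes $\ep > 0$, obtains $\|Av\| \le (L + \ep)\|v\|$ for all sufficiently small $v$, and then lets $\ep \to 0$.
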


\begin{proof}
Let $\ep > 0$ be given.
By the assumption, there is $\delta > 0$ such that for all $v \in X$, $\|v\| \le \delta$ implies
	\begin{equation*}
		x_0 + v \in U
			\mspace{10mu} \text{and} \mspace{10mu}
		 \|f(x_0 + v) - f(x_0) - Av\| \le \ep\|v\|.
	\end{equation*}
Therefore,
	\begin{align*}
		\|Av\|
		&\le \|Av - (f(x_0 + v) - f(x_0))\| + \|f(x_0 + v) - f(x_0)\| \\
		&\le (\ep + L)\|v\|
	\end{align*}
for all $v \in X$ satisfying $\|v\| \le \delta$.
This shows $\|A\| \le L + \ep$.
Since $\ep > 0$ is arbitrary, $\|A\| \le L$ holds.
\end{proof}

The following is the $C^1$-uniform contraction theorem which is used in this paper.

\begin{theorem}\label{thm:C^1-uniform contraction thm}
Let $X$ be a Banach space, $\Lambda$ be a normed space, $U \subset X$ be an open subset,
$V \subset \Lambda$ be a subset contained in the set of all limit points of $V$,
and $T \colon U \times V \to X$ be a map.
Suppose that there exists a map $g \colon V \to U$ such that
$g(\lambda)$ is a fixed point of $T(\cdot, \lambda) \colon U \to X$ for all $\lambda \in V$
and the family of maps
	\begin{equation*}
		T(\cdot, \lambda) \colon U \to X,
	\end{equation*}
where $\lambda \in V$, is a uniform contraction.
If $T$ is Fr\'{e}chet differentiable, then for each $\lambda_0 \in V$,
	\begin{equation*}
		\|g(\lambda) - g(\lambda_0) - A_{\lambda_0}(\lambda - \lambda_0)\| = o(\|\lambda - \lambda_0\|)
	\end{equation*}
as $\|\lambda - \lambda_0\| \to 0$ in $V$.
Here the bounded linear operator $A_\lambda \colon \Lambda \to X$ is defined by
	\begin{equation*}
		A_\lambda := [1 - D_1T(g(\lambda), \lambda)]^{-1}D_2T(g(\lambda), \lambda)
	\end{equation*}
for every $\lambda \in V$.
\end{theorem}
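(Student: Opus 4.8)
The plan is to differentiate the fixed-point identity $g(\lambda) = T(g(\lambda), \lambda)$ implicitly: the formal chain rule gives $[1 - D_1T(g(\lambda),\lambda)]Dg(\lambda) = D_2T(g(\lambda),\lambda)$, and the whole task is to justify this rigorously using only the uniform contraction and the Fréchet differentiability of $T$. I would begin by checking that $A_\lambda$ is well-defined. Since $T(\cdot,\lambda)$ is a uniform contraction, there is a single constant $0 < c < 1$ with $\|T(x_1,\lambda) - T(x_2,\lambda)\| \le c\|x_1 - x_2\|$ for all $x_1, x_2 \in U$ and $\lambda \in V$. Applying Lemma~\ref{lem:Frechet differentiability and Lipschitz continuity} to the partial map $x \mapsto T(x,\lambda)$ at $g(\lambda)$ shows $\|D_1T(g(\lambda),\lambda)\| \le c < 1$, so $1 - D_1T(g(\lambda),\lambda)$ is invertible by a Neumann series with $\|[1 - D_1T(g(\lambda),\lambda)]^{-1}\| \le (1 - c)^{-1}$. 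Hence $A_\lambda \in \mathcal{L}(\Lambda, X)$ for every $\lambda \in V$.

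Next I would derive an \emph{a priori} Lipschitz estimate on $g$ near $\lambda_0$. Subtracting the two fixed-point equations and inserting the term $T(g(\lambda_0),\lambda)$ gives
\[
	\|g(\lambda) - g(\lambda_0)\| \le c\|g(\lambda) - g(\lambda_0)\| + \|T(g(\lambda_0),\lambda) - T(g(\lambda_0),\lambda_0)\|.
\]
Because $T$ is Fréchet differentiable at $(g(\lambda_0),\lambda_0)$, Lemma~\ref{lem:linear approximation and continuity} bounds the last term by $L\|\lambda - \lambda_0\|$ for some $L > 0$ and all $\lambda$ close to $\lambda_0$; rearranging yields $\|g(\lambda) - g(\lambda_0)\| \le (1 - c)^{-1}L\|\lambda - \lambda_0\|$.

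Then I would combine this bound with the first-order expansion of $T$. Writing $g(\lambda) - g(\lambda_0) = T(g(\lambda),\lambda) - T(g(\lambda_0),\lambda_0)$ and expanding $T$ at $(g(\lambda_0),\lambda_0)$ gives
\[
	g(\lambda) - g(\lambda_0) = D_1T(g(\lambda_0),\lambda_0)(g(\lambda) - g(\lambda_0)) + D_2T(g(\lambda_0),\lambda_0)(\lambda - \lambda_0) + o(\|g(\lambda) - g(\lambda_0)\| + \|\lambda - \lambda_0\|).
\]
The Lipschitz estimate turns the remainder into $o(\|\lambda - \lambda_0\|)$. Moving the $D_1T$ term to the left and applying the bounded inverse $[1 - D_1T(g(\lambda_0),\lambda_0)]^{-1}$ from the first step produces exactly $\|g(\lambda) - g(\lambda_0) - A_{\lambda_0}(\lambda - \lambda_0)\| = o(\|\lambda - \lambda_0\|)$, as claimed.

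The main obstacle is the control of the remainder term. The error in the expansion of $T$ is intrinsically $o(\|g(\lambda) - g(\lambda_0)\| + \|\lambda - \lambda_0\|)$, and it is precisely the \emph{a priori} Lipschitz bound on $g$ — which is itself a consequence of the \emph{uniform} contraction constant being strictly below $1$ — that allows the $\|g(\lambda) - g(\lambda_0)\|$ contribution to be absorbed into $o(\|\lambda - \lambda_0\|)$. The same uniformity is what guarantees the invertibility of $1 - D_1T(g(\lambda_0),\lambda_0)$; without it neither ingredient would be available.
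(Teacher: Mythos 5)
Your proposal is correct and takes essentially the same route as the paper's own proof: the same a priori bound $\|g(\lambda) - g(\lambda_0)\| \le (1 - c)^{-1}L\|\lambda - \lambda_0\|$ via Lemma~\ref{lem:linear approximation and continuity}, the same use of Lemma~\ref{lem:Frechet differentiability and Lipschitz continuity} to get $\|D_1T\| \le c$ and hence invertibility of $1 - D_1T(g(\lambda_0), \lambda_0)$ by a Neumann series, and the same absorption of the $o(\|g(\lambda) - g(\lambda_0)\| + \|\lambda - \lambda_0\|)$ remainder into $o(\|\lambda - \lambda_0\|)$ before applying the bounded inverse. No gaps; this matches the paper's three-step argument.
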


\begin{proof}
We choose $c \in (0, 1)$ so that for all $(x_1, \lambda), (x_2, \lambda) \in U \times V$
	\begin{equation*}
		\|T(x_1, \lambda) - T(x_2, \lambda)\| \le c\|x_1 - x_2\|
	\end{equation*}
holds.
Let $\lambda_0 \in V$ be given.

\textbf{Step 1.}
By the assumption, $T(g(\lambda_0), \cdot) \colon V \to X$ is Fr\'{e}chet differentiable at $\lambda_0$.
From Lemma~\ref{lem:linear approximation and continuity},
we can choose positive numbers $\delta, L$ so that
for all $\lambda \in V$, $\|\lambda - \lambda_0\| \le \delta$ implies
	\begin{equation*}
		\|T(g(\lambda_0), \lambda) - T(g(\lambda_0), \lambda_0)\| \le L\|\lambda - \lambda_0\|.
	\end{equation*}
Since $g(\lambda)$ is the fixed point of $T(\cdot, \lambda)$ for each $\lambda \in V$,
	\begin{align*}
		\|g(\lambda) - g(\lambda_0)\|
		&= \|T(g(\lambda), \lambda) - T(g(\lambda_0), \lambda_0)\| \\
		&\le \|T(g(\lambda), \lambda) - T(g(\lambda_0), \lambda)\|
		+ \|T(g(\lambda_0), \lambda) - T(g(\lambda_0), \lambda_0)\| \\
		&\le c\|g(\lambda) - g(\lambda_0)\| + \|T(g(\lambda_0), \lambda) - T(g(\lambda_0), \lambda_0)\|.
	\end{align*}
By combining the above inequalities,
	\begin{align*}
		\|g(\lambda) - g(\lambda_0)\|
		&\le \frac{\|T(g(\lambda_0), \lambda) - T(g(\lambda_0), \lambda_0)\|}{1 - c} \\
		&\le \frac{L}{1 - c}\|\lambda - \lambda_0\|
	\end{align*}
holds for all $\lambda \in V$ satisfying $\|\lambda - \lambda_0\| \le \delta$.

\textbf{Step 2.}
By the assumption, the map $T(\cdot, \lambda) \colon U \to X$ is Lipschitz continuous with a Lipschitz constant $c$
and is Fr\'{e}chet differentiable for all $\lambda \in V$.
Applying Lemma~\ref{lem:Frechet differentiability and Lipschitz continuity},
	\begin{equation*}
		\|D_1T(x, \lambda)\| \le c
	\end{equation*}
holds for every $(x, \lambda) \in U \times V$.
In particular, $D_1T(g(\lambda_0), \lambda_0) \in \mathcal{L}(X)$ has the bounded inverse
because $X$ is a Banach space.
By using
	\begin{equation*}
		A_{\lambda_0} = D_1T(g(\lambda_0), \lambda_0)A_{\lambda_0} + D_2T(g(\lambda_0), \lambda_0),
	\end{equation*}
we have 
	\begin{align*}
		&g(\lambda) - g(\lambda_0) - A_{\lambda_0}(\lambda - \lambda_0) \\
		&= T(g(\lambda), \lambda) - T(g(\lambda_0), \lambda_0)
			- [D_1T(g(\lambda_0), \lambda_0)A_{\lambda_0}(\lambda - \lambda_0)
			+ D_2T(g(\lambda_0), \lambda_0)(\lambda - \lambda_0)]
	\end{align*}
for all $\lambda \in V$.
This implies
	\begin{align*}
		&[1 - D_1T(g(\lambda_0), \lambda_0)][g(\lambda) - g(\lambda_0) - A_{\lambda_0}(\lambda - \lambda_0)] \\
		&= T(g(\lambda), \lambda) - T(g(\lambda_0), \lambda_0)
			- DT(g(\lambda_0), \lambda_0)(g(\lambda) - g(\lambda_0), \lambda - \lambda_0)
	\end{align*}
by a simple calculation.

\textbf{Step 3.}
Let $\ep > 0$ be given.
Since $T$ is Fr\'{e}chet differentiable at $(g(\lambda_0), \lambda_0)$,
there is $\delta' > 0$ such that
for all $(x, \lambda) \in U \times V$, $\|x - g(\lambda_0)\| + \|\lambda - \lambda_0\| \le \delta'$ implies
	\begin{align*}
		&\|T(x, \lambda) - T(g(\lambda_0), \lambda_0)
		- DT(g(\lambda_0), \lambda_0)(x - g(\lambda_0), \lambda - \lambda_0)\| \\
		&\le \ep(\|x - g(\lambda_0)\| + \|\lambda - \lambda_0\|).
	\end{align*}
By the continuity of $g$ at $\lambda_0$,
we can choose $\delta'' \in (0, \delta)$ such that
for all $\lambda \in V$, $\|\lambda - \lambda_0\| \le \delta''$ implies
	\begin{equation*}
		\|g(\lambda) - g(\lambda_0)\| + \|\lambda - \lambda_0\| \le \delta',
	\end{equation*}
and therefore,
	\begin{align*}
		&\|T(g(\lambda), \lambda) - T(g(\lambda_0), \lambda_0)
			- DT(g(\lambda_0), \lambda_0)(g(\lambda) - g(\lambda_0), \lambda - \lambda_0)\| \\
		&\le \ep(\|g(\lambda) - g(\lambda_0)\| + \|\lambda - \lambda_0\|) \\
		&\le \left( \frac{L}{1 - c} + 1 \right) \ep \|\lambda - \lambda_0\|.
	\end{align*}
By combining Step 2 with this inequality, we obtain
	\begin{equation*}
		\|g(\lambda) - g(\lambda_0) - A_{\lambda_0}(\lambda - \lambda_0)\|
		= o(\|\lambda - \lambda_0\|)
	\end{equation*}
as $\|\lambda - \lambda_0\| \to 0$ in $V$.
\end{proof}

\begin{remark}
If $T$ is continuously Fr\'{e}chet differentiable,
$\lambda \mapsto A_\lambda$ is continuous.
\end{remark}

\section{Continuity and smoothness of maximal semiflows}\label{sec:maximal semiflows}

\begin{definition}[Maximal semiflows]\label{dfn:maximal semiflows}
Let $X$ be a set and $D \subset \R_+ \times X$ be a subset.
A map $\varPhi \colon D \to X$ is called a \textit{maximal semiflow} in $X$ if the following conditions are satisfied:
\begin{enumerate}
\item[(i)] There exists a function $T_\varPhi \colon X \to (0, \infty]$ such that
	\begin{equation*}
		D = \bigcup_{x \in X} \bigl( [0, T_\varPhi(x)) \times \{x\} \bigr).
	\end{equation*}
\item[(ii)] For all $x \in X$, $\varPhi(0, x) = x$.
\item[(iii)] For all $t, s \in \R_+$ and all $x \in X$,
both of the conditions $(t, x) \in D$ and $(s, \varPhi(t, x)) \in D$ imply 
	\begin{equation*}
		(t + s, x) \in D
			\mspace{10mu} \text{and} \mspace{10mu}
		\varPhi(t + s, x) = \varPhi(s, \varPhi(t, x)).
	\end{equation*}
\end{enumerate}
The above function $T_\varPhi$ is called the \textit{escape time function}.
\end{definition}

\begin{remark}
The condition (iii) means the maximality of domain of definition of $\varPhi$.
In terms of the escape time function $T_\varPhi$, (iii) is equivalent to the following:
both of $t < T_\varPhi(x)$ and $s < T_\varPhi(\varPhi(t, x))$ imply $t + s < T_\varPhi(x)$.
The terminology of maximal semiflows comes from \cite{Marsden--McCracken 1976}.
\end{remark}

\begin{definition}[Time-$t$ map]
Let $\varPhi$ be a maximal semiflow in a set $X$
with the escape time function $T_\varPhi \colon X \to (0, \infty]$.
For each $t \in \R_+$, the map $\varPhi^t \colon \dom(\varPhi^t) \to X$ defined by
	\begin{equation*}
		\dom(\varPhi^t) = \{\mspace{2mu} x \in X : T_\varPhi(x) > t \mspace{2mu}\}
			\mspace{10mu} \text{and} \mspace{10mu}
		\varPhi^t(x) = \varPhi(t, x)
	\end{equation*}
is called the \textit{time-$t$ map} of $\varPhi$.
\end{definition}

\begin{definition}[Lower semicontinuity]
Let $X$ be a topological space, $x_0 \in X$, and $f \colon X \to (0, \infty]$ be a function.
$f$ is said to be \textit{lower semicontinuous} at $x_0$
if for every $M < f(x_0)$, there exists a neighborhood $N$ of $x_0$ such that 
for all $x \in N$, $f(x) > M$.
$f$ is said to be \textit{lower semicontinuous} if $f$ is lower semicontinuous at every $x_0 \in X$.
\end{definition}

\begin{definition}[$C^0$-maximal semiflows]\label{dfn:C^0-maximal semiflows}
Let $X$ be a topological space and $\varPhi \colon \dom(\varPhi) \to X$ be a maximal semiflow in $X$.
$\varPhi$ is called a \textit{$C^0$-maximal semiflow}
if $\varPhi$ is a continuous map and the escape time function $T_\varPhi \colon X \to (0, \infty]$ is lower semicontinuous.
\end{definition}

\begin{remark}
In \cite{Hajek 1968}, a $C^0$-maximal semiflow is called a continuous local semi-dynamical system.
\end{remark}

The proofs of the following two lemmas are straightforward and can be omitted.

\begin{lemma}
Let $\varPhi \colon \dom(\varPhi) \to X$ be a maximal semiflow in a topological space $X$
with the escape time function $T_\varPhi \colon X \to (0, \infty]$.
Then the following properties are equivalent:
\begin{enumerate}
\item[(a)] $T_\varPhi \colon X \to (0, \infty]$ is lower semicontinuous.
\item[(b)] $\dom(\varPhi)$ is an open set of $\R_+ \times X$.
\end{enumerate}
\end{lemma}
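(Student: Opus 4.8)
The plan is to prove the two implications separately, relying throughout on the explicit description
\[
	\dom(\varPhi) = \{\, (t, x) \in \R_+ \times X : 0 \le t < T_\varPhi(x) \,\}
\]
furnished by condition (i) of Definition~\ref{dfn:maximal semiflows}, which exhibits $\dom(\varPhi)$ as the region strictly below the graph of $T_\varPhi$. Both directions then amount to translating between this description and the definition of lower semicontinuity, and the only subtlety to track is that $\R_+$ carries the subspace topology inherited from $\R$, so that half-open intervals of the form $[0, M)$ are open in $\R_+$.

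For (a) $\Rightarrow$ (b), I would fix a point $(t_0, x_0) \in \dom(\varPhi)$ and choose a real number $M$ with $t_0 < M < T_\varPhi(x_0)$. Lower semicontinuity of $T_\varPhi$ at $x_0$ then supplies a neighborhood $N$ of $x_0$ on which $T_\varPhi > M$, and $[0, M) \times N$ is a neighborhood of $(t_0, x_0)$ in $\R_+ \times X$ that is contained in $\dom(\varPhi)$, since every $(t, x)$ in it satisfies $t < M < T_\varPhi(x)$. As $(t_0, x_0)$ was arbitrary, $\dom(\varPhi)$ is open.

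For (b) $\Rightarrow$ (a), I would fix $x_0 \in X$ together with a real number $M < T_\varPhi(x_0)$ and seek a neighborhood of $x_0$ on which $T_\varPhi > M$. The case $M < 0$ is immediate because $T_\varPhi$ is everywhere positive, so I may assume $0 \le M < T_\varPhi(x_0)$, which places $(M, x_0)$ in $\dom(\varPhi)$. Openness of $\dom(\varPhi)$ and the definition of the product topology then yield a basic open set $U \times N \subset \dom(\varPhi)$ with $M \in U$ and $N$ a neighborhood of $x_0$; in particular $(M, x) \in \dom(\varPhi)$, i.e.\ $M < T_\varPhi(x)$, for every $x \in N$. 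Hence $T_\varPhi$ is lower semicontinuous at $x_0$, and therefore on all of $X$.

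I do not anticipate a genuine obstacle here, as the argument is purely point-set topological. The one place to stay careful is the boundary behaviour at $t = 0$ together with the use of the subspace topology on $\R_+$ in the first implication (so that $[0, M)$ qualifies as an open neighborhood even when $t_0 = 0$), and the trivial sign case $M < 0$ in the second.
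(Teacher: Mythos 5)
Your proof is correct, and since the paper explicitly omits the proof of this lemma as straightforward, your argument is precisely the routine point-set translation the author had in mind: strict inequalities $t_0 < M < T_\varPhi(x_0)$ plus a basic open box $[0, M) \times N$ in one direction, and evaluating openness at the point $(M, x_0)$ in the other. Your attention to the subspace topology on $\R_+$ at $t_0 = 0$ and to the trivial case $M < 0$ covers exactly the minor edge cases such an omitted proof silently relies on.
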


\begin{lemma}\label{lem:domain of time-t map}
Let $\varPhi$ be a $C^0$-maximal semiflow in a topological space $X$
with the escape time function $T_\varPhi \colon X \to (0, \infty]$.
Then for each $t \in \R_+$,
	\begin{equation*}
		\{\mspace{2mu} x \in X : T_\varPhi(x) > t \mspace{2mu}\}
	\end{equation*}
is an open subset of $X$.
\end{lemma}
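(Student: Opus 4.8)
The plan is to verify directly that every point of the superlevel set $S_t := \{\, x \in X : T_\varPhi(x) > t \,\}$ is an interior point, using only the lower semicontinuity of the escape time function, which is part of the definition of a $C^0$-maximal semiflow. No continuity of $\varPhi$ itself is needed for this particular statement; it is purely a property of $T_\varPhi$.

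First I would fix $t \in \R_+$ and take an arbitrary $x_0 \in S_t$, so that $t < T_\varPhi(x_0)$. The key observation is that $t$ is an admissible value of the parameter $M$ appearing in the definition of lower semicontinuity of $T_\varPhi$ at $x_0$, precisely because the strict inequality $t < T_\varPhi(x_0)$ holds. Applying that definition with $M = t$ then produces a neighborhood $N$ of $x_0$ such that $T_\varPhi(x) > t$ for all $x \in N$; in other words, $N \subset S_t$. Since $x_0 \in S_t$ was arbitrary, every point of $S_t$ admits a neighborhood contained in $S_t$, and therefore $S_t$ is open in $X$.

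There is essentially no obstacle here: the statement is just the standard fact that a lower semicontinuous function has open superlevel sets, specialized to the superlevel set at height $t$. The only point requiring a moment's care is that the \emph{strict} inequality $T_\varPhi(x_0) > t$ is exactly what legitimizes the choice $M = t$ in the definition of lower semicontinuity; the corresponding non-strict sublevel argument would fail, but since the set in question is defined by a strict inequality, the definition applies without modification. This is why the proof is described as straightforward and can be presented in a single short paragraph.
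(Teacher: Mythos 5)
Your proof is correct: applying the paper's definition of lower semicontinuity at each $x_0$ with $M = t$ (legitimate precisely because $T_\varPhi(x_0) > t$) yields a neighborhood contained in the superlevel set, which is exactly the ``straightforward'' argument the paper omits, and you rightly note that continuity of $\varPhi$ itself plays no role. Nothing is missing.
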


The following theorem states that the local continuity property of maximal semiflows can induce
their global continuity property.
We omit the proof because a similar statement is proved in \cite[Theorem A.7]{Nishiguchi 2017}.

\begin{theorem}\label{thm:C^0-maximal semiflow}
Let $\varPhi \colon \dom(\varPhi) \to X$ be a maximal semiflow in a topological space $X$
with the escape time function $T_\varPhi \colon X \to (0, \infty]$.
Suppose that for every $x \in X$, the orbit $[0, T_\varPhi(x)) \ni t \mapsto \varPhi(t, x) \in X$ is continuous.
If for every $x \in X$, there exist $T > 0$ and a neighborhood $N$ of $x$ in such that
$[0, T] \times N \subset \dom(\varPhi)$ and $\varPhi|_{[0, T] \times N}$ is continuous,
then $\varPhi$ is a $C^0$-maximal semiflow.
\end{theorem}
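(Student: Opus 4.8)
The plan is to establish the single local statement that every point $(t_0, x_0) \in \dom(\varPhi)$ admits an open neighborhood $W \subset \dom(\varPhi)$ on which $\varPhi$ is continuous. This statement simultaneously yields that $\dom(\varPhi)$ is open in $\mathbb{R}_+ \times X$ and that $\varPhi$ is continuous on $\dom(\varPhi)$; the openness then gives the lower semicontinuity of $T_\varPhi$ by the lemma characterizing lower semicontinuity of $T_\varPhi$ through the openness of $\dom(\varPhi)$, so that $\varPhi$ is a $C^0$-maximal semiflow in the sense of Definition~\ref{dfn:C^0-maximal semiflows}.

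First I would fix $(t_0, x_0) \in \dom(\varPhi)$, so that $0 \le t_0 < T_\varPhi(x_0)$, and consider the orbit $\gamma(s) := \varPhi(s, x_0)$, which by hypothesis is continuous on the compact interval $[0, t_0]$. The engine of the whole argument is the maximality property (iii) of Definition~\ref{dfn:maximal semiflows}: whenever $(\sigma, x) \in \dom(\varPhi)$ and $(\tau, \varPhi(\sigma, x)) \in \dom(\varPhi)$, one has $(\sigma + \tau, x) \in \dom(\varPhi)$ together with $\varPhi(\sigma + \tau, x) = \varPhi(\tau, \varPhi(\sigma, x))$. This cocycle identity is precisely what lets me transport the locally furnished continuity forward along the orbit and guarantee that the glued times remain in $\dom(\varPhi)$.

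Then I would run a connectedness argument on the set
\[
	S := \{\, t \in [0, t_0] : \text{$\varPhi$ is continuous on some open neighborhood of $[0, t] \times \{x_0\}$ contained in $\dom(\varPhi)$} \,\}.
\]
The local continuity hypothesis at $x_0$ shows $0 \in S$ (using that $[0, T) \times N$ is open in $\mathbb{R}_+ \times X$), and it shows $S$ is relatively open: if $t \in S$, local continuity at $z := \gamma(t)$ supplies $T_z > 0$ and a neighborhood $N_z$, and after shrinking the neighborhood of $x_0$ so that the already-continuous map $x \mapsto \varPhi(t, x)$ sends it into $N_z$, the identity $\varPhi(t + \tau, x) = \varPhi(\tau, \varPhi(t, x))$ for $0 \le \tau < T_z$ exhibits $\varPhi$ as a composition of continuous maps on an open set reaching past $t$, so $[0, t + \ep) \subset S$ for some $\ep > 0$. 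The same composition, applied at $z^* := \gamma(c)$ with $c := \sup S$ and a value $t' \in S$ chosen close enough to $c$ that $c - t' < T_{z^*}$ and $\gamma(t') \in N_{z^*}$, shows $c \in S$, so $S$ is also relatively closed. Since $[0, t_0]$ is connected, $S = [0, t_0]$, and $t_0 \in S$ delivers the required $W$ around $(t_0, x_0)$.

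The main obstacle I anticipate lies in the bookkeeping of the gluing step at $\sup S$: one must shrink the neighborhood of $x_0$ finitely often while verifying, through property (iii), that the composite times genuinely lie in $\dom(\varPhi)$ and that the continuous map $x \mapsto \varPhi(t', x)$ carries the shrunken neighborhood into the local chart $N_{z^*}$, with the matching condition $\gamma(t') \in N_{z^*}$ coming from the continuity of the orbit $\gamma$. Once the charts are aligned, continuity of $\varPhi$ on the glued open set is just continuity of a composition, and the overlap of consecutive charts patches the pieces into continuity on a single open neighborhood of $[0, t_0] \times \{x_0\}$.
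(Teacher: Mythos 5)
Your proposal is correct, and it takes essentially the approach the paper intends: the paper omits the proof of this theorem (deferring to Theorem A.7 of Nishiguchi 2017), but your scheme---propagating the local continuity charts forward along the orbit via the maximality identity $\varPhi(t+\tau, x) = \varPhi(\tau, \varPhi(t, x))$, with a connectedness/sup argument on $S$---is exactly the mechanism used in the paper's own proof of the $C^1$ analogue (Theorem~\ref{thm:C^1-maximal semiflow}). The one genuinely delicate point, that $[t, t+T_z) \times V'$ is not open in $\mathbb{R}_+ \times X$ when $t > 0$ so the forward chart must be anchored at some $t' < t$ with $\gamma(t') \in N_z$ (or the two pieces glued by the pasting lemma on closed overlapping sets), is precisely the bookkeeping you identify and resolve in your closedness step.
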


\begin{remark}
In \cite[Theorem 15]{Hajek 1968}, the conclusion is obtained under the weaker assumption that
for every $(t, x) \in \dom(\varPhi)$, $\varPhi([0, t] \times \{x\})$ is compact.
The proof is based on the notion of germs.
\end{remark}

\begin{definition}[$C^1$-maximal semiflows]\label{dfn:C^1-maximal semiflows}
Let $X$ be a normed space and $\varOmega \subset X$ be a subset contained in the set of all limit points of $\varOmega$.
A $C^0$-maximal semiflow $\varPhi \colon \dom(\varPhi) \to \varOmega$ is called
a \textit{$C^1$-maximal semiflow}
if each time-$t$ map $\varPhi^t$ is continuously Fr\'{e}chet differentiable.
\end{definition}

\begin{remark}
In the setting of Definition~\ref{dfn:C^1-maximal semiflows},
$\dom(\varPhi^t)$ is open in $\varOmega$ from Lemma~\ref{lem:domain of time-t map}.
Therefore, $\dom(\varPhi^t) = U \cap \varOmega$ holds for some open set $U$ of $X$.
This implies that $\dom(\varPhi^t)$ is also contained in the set of all limit points of $\dom(\varPhi^t)$,
and it is meaningful to consider the continuous Fr\'{e}chet differentiability of each $\varPhi^t$.
\end{remark}

By definition, a $C^1$-maximal semiflow is not necessarily continuously Fr\'{e}chet differentiable
(see \cite[p. 260]{Marsden--McCracken 1976}).

The following theorem ensures that a $C^0$-maximal semiflow is of class $C^1$
provided that the maximal semiflow has a local smoothness property.
The proof is similar to that of \cite[Theorem 1]{Walther 2003c}.

\begin{theorem}\label{thm:C^1-maximal semiflow}
Let $X$ be a normed space, $\varOmega \subset X$ be a subset contained in the set of all limit points of $\varOmega$,
and $\varPhi \colon \dom(\varPhi) \to \varOmega$ be a $C^0$-maximal semiflow
with the escape time function $T_\varPhi \colon \varOmega \to (0, \infty]$.
Suppose that any function $f \colon \varOmega \to X$ has a unique linear approximation
at every $x \in \varOmega$.
If for every $x \in \varOmega$, there exist $T > 0$ and an open neighborhood $N$ of $x$ such that
\begin{itemize}
\item $[0, T] \times N \cap \varOmega \subset \dom(\varPhi)$ and
\item $\varPhi^t|_{N \cap \varOmega}$ is continuously Fr\'{e}chet differentiable for every $t \in [0, T]$,
\end{itemize}
then $\varPhi$ is a $C^1$-maximal semiflow.
\end{theorem}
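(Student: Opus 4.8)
The plan is to verify the defining property of a $C^1$-maximal semiflow (Definition~\ref{dfn:C^1-maximal semiflows}), namely that every time-$t$ map $\varPhi^t$ is continuously Fr\'{e}chet differentiable. Since $\varPhi$ is already a $C^0$-maximal semiflow and each $\dom(\varPhi^t)$ is open in $\varOmega$ (Lemma~\ref{lem:domain of time-t map}), so that the differentiability question is meaningful, it suffices to fix $t \in \R_+$ and $x \in \dom(\varPhi^t)$ and to produce an open neighborhood of $x$ in $\varOmega$ on which $\varPhi^t$ is continuously Fr\'{e}chet differentiable. The essential device is the semiflow identity $\varPhi^{s_1 + s_2} = \varPhi^{s_2} \circ \varPhi^{s_1}$, which lets me factor the long-time map $\varPhi^t$ into a finite composition of short-time maps, each of which falls under the local smoothness hypothesis.

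First I would form the orbit segment $K := \{\mspace{2mu} \varPhi^s(x) : s \in [0, t] \mspace{2mu}\}$. Because $\varPhi$ is a $C^0$-maximal semiflow, $\varPhi$ is continuous, hence the orbit $[0, t] \ni s \mapsto \varPhi^s(x)$ is continuous and $K$ is compact. For each $y \in K$ the hypothesis supplies a number $T_y > 0$ and an open neighborhood $N_y$ of $y$ such that $\varPhi^s|_{N_y \cap \varOmega}$ is continuously Fr\'{e}chet differentiable for all $s \in [0, T_y]$. The family $\{N_y\}_{y \in K}$ is an open cover of the compact set $K$, so I would extract a finite subcover $N_{y_1}, \dots, N_{y_m}$ and set $\tau := \min_i T_{y_i} > 0$.

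Next I would choose a partition $0 = t_0 < t_1 < \cdots < t_n = t$ with $t_j - t_{j-1} < \tau$ for every $j$. For each $j$ the point $\varPhi^{t_{j-1}}(x) \in K$ lies in some $N_{y_{i(j)}}$, and since $t_j - t_{j-1} \le \tau \le T_{y_{i(j)}}$, the map $\varPhi^{t_j - t_{j-1}}$ is continuously Fr\'{e}chet differentiable on $N_{y_{i(j)}} \cap \varOmega$. By the semiflow identity,
\[
	\varPhi^t = \varPhi^{t_n - t_{n-1}} \circ \cdots \circ \varPhi^{t_1 - t_0}
\]
holds wherever the right-hand side is defined. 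Using that each factor is continuous (Fr\'{e}chet differentiability implies continuity by Lemma~\ref{lem:linear approximation and continuity}), I would shrink neighborhoods successively: continuity of $\varPhi^{t_1 - t_0}$ at $x$ gives a neighborhood carried into $N_{y_{i(2)}}$, and iterating yields an open neighborhood $W$ of $x$ in $\varOmega$ on which the composition is defined and coincides with $\varPhi^t$.

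Finally I would invoke the chain rule for Fr\'{e}chet derivatives to conclude that $\varPhi^t|_{W}$ is continuously Fr\'{e}chet differentiable, its derivative being the composition of the derivatives of the individual factors evaluated along the orbit; continuity of this derivative follows from the continuity of each factor and of each map $(y) \mapsto D\varPhi^{t_j - t_{j-1}}(y)$. The main obstacle is precisely that $\varOmega$ need not be open, so the chain rule must be justified for Fr\'{e}chet differentiability in the sense of Definition~\ref{dfn:Frechet differentiability}; this is where the standing hypothesis that every $f \colon \varOmega \to X$ admits a \emph{unique} linear approximation at each point enters, since it guarantees that the Fr\'{e}chet derivatives are well-defined single operators and therefore compose unambiguously. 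As $x \in \dom(\varPhi^t)$ and $t \in \R_+$ were arbitrary, every time-$t$ map is continuously Fr\'{e}chet differentiable, and hence $\varPhi$ is a $C^1$-maximal semiflow.
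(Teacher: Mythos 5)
Your proposal is correct, but it proves the theorem by a genuinely different mechanism than the paper does. You fix $t$ and $x \in \dom(\varPhi^t)$, exploit compactness of the orbit segment $K = \varPhi([0, t] \times \{x\})$ to extract a finite subcover from the hypothesis neighborhoods, take the uniform short time $\tau = \min_i T_{y_i}$, and factor $\varPhi^t$ through a partition of mesh less than $\tau$ into finitely many short-time maps, each continuously Fr\'{e}chet differentiable on a relatively open set; successive shrinking plus the chain rule then yields the claim. The paper never partitions $[0, t]$: for each $x_0$ it considers the set $S_{x_0}$ of times $T$ for which a \emph{single} neighborhood $N$ makes $\varPhi^s|_{N \cap \varOmega}$ continuously Fr\'{e}chet differentiable for all $s \in [0, T]$, supposes $t_* := \sup(S_{x_0}) < T_\varPhi(x_0)$, applies the local hypothesis at the one orbit point $x_* = \varPhi(t_*, x_0)$, and uses the two-factor decomposition $\varPhi^t|_{N' \cap \varOmega} = \varPhi^{t - t'}|_{N_* \cap \varOmega} \circ \varPhi^{t'}|_{N' \cap \varOmega}$, with $t' < t_*$ chosen close to $t_*$ by continuity of the orbit, to push membership in $S_{x_0}$ beyond $t_*$ --- a contradiction. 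Both arguments rest on the same two pillars, namely the semiflow identity with neighborhood shrinking and a chain rule on relatively open subsets of $\varOmega$; as you correctly flag, it is the standing uniqueness-of-linear-approximation hypothesis (read as: a linear approximation, when it exists, is unique), together with Lemma~\ref{lem:linear approximation and continuity} supplying the local Lipschitz-type bound needed in the composition estimate, that makes the Fr\'{e}chet derivatives well-defined and composable --- the paper glosses this at exactly the same level of detail in its Step 4. As for what each approach buys: your covering argument is constructive and elementary, producing an explicit partition and neighborhood for each fixed $t$, whereas the paper's continuation argument needs no compactness or subcover, invokes the hypothesis at only one new point per step, and delivers the slightly stronger uniform conclusion $\sup(S_{x_0}) = T_\varPhi(x_0)$, i.e., one neighborhood serving all intermediate times simultaneously; your construction would in fact also yield this uniformity with one extra line (for $t_{j-1} \le s \le t_j$ write $\varPhi^s = \varPhi^{s - t_{j-1}} \circ \varPhi^{t_{j-1}}$ on $W$, with $s - t_{j-1} < \tau$), though Definition~\ref{dfn:C^1-maximal semiflows} only requires the per-$t$ statement you established.
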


\begin{proof}
\textbf{Step 1.}
For each $x \in \varOmega$, we define a subset $S_x \subset (0, T_\varPhi(x))$ by the following manner:
$T \in S_x$ if there exists an open neighborhood $N$ of $x$ such that
\begin{itemize}
\item $[0, T] \times N \cap \varOmega \subset \dom(\varPhi)$ and
\item $\varPhi^t|_{N \cap \varOmega}$ is continuously Fr\'{e}chet differentiable for every $t \in [0, T]$.
\end{itemize}
By the assumptions, $S_x \ne \emptyset$, and therefore, $\sup(S_x) \in (0, T_\varPhi(x)]$ exists.
If $\sup(S_x) = T_\varPhi(x)$ for all $x \in \varOmega$, then every $\varPhi^t$ is continuously Fr\'{e}chet differentiable.

Let $x_0 \in \varOmega$ be fixed.

\textbf{Step 2.}
We suppose
	\begin{equation*}
		t_* := \sup(S_{x_0}) < T_\varPhi(x_0)
	\end{equation*}
and derive a contradiction.
We note that one cannot conclude $t_* \in S_{x_0}$ in general.
Let
	\begin{equation*}
		x_* := \varPhi(t_*, x_0) \in \varOmega.
	\end{equation*}
By the assumptions, we can choose $T_* > 0$ and an open neighborhood $N_*$ of $x_*$ so that
\begin{itemize}
\item $[0, T_*] \times N_* \cap \varOmega \subset \dom(\varPhi)$ and
\item $\varPhi^t|_{N_* \cap \varOmega}$ is continuously Fr\'{e}chet differentiable for every $t \in [0, T_*]$.
\end{itemize}

\textbf{Step 3.}
Since $[0, T_\varPhi(x_0)) \ni t \mapsto \varPhi(t, x_0)$ is continuous at $t_*$, we can choose $t'$ so that
	\begin{equation*}
		t_* - \frac{T_*}{2} < t' < t_*
			\mspace{10mu} \text{and} \mspace{10mu}
		\varPhi(t', x_0) \in N_* \cap \varOmega.
	\end{equation*}
We can also choose an open neighborhood $N'$ of $x_0$ such that
\begin{itemize}
\item $[0, t'] \times N' \cap \varOmega \subset \dom(\varPhi)$,
\item $\varPhi^t|_{N' \cap \varOmega}$ is continuously Fr\'{e}chet differentiable for every $t \in [0, t']$, and
\item $\varPhi^{t'}(N' \cap \varOmega) \subset N_* \cap \varOmega$
\end{itemize}
because $t' < t_*$ and $\varPhi^{t'}$ is continuous at $x_0$.
Then for all $t \in [t', t' + T_*]$ and all $x \in N' \cap \varOmega$,
	\begin{equation*}
		(t', x) \in \dom(\varPhi)
			\mspace{10mu} \text{and} \mspace{10mu}
		(t - t', \varPhi(t', x)) \in [0, T_*] \times N_* \cap \varOmega \subset \dom(\varPhi),
	\end{equation*}
which implies
	\begin{equation*}
		(t, x) = (t' + (t - t'), x) \in \dom(\varPhi)
	\end{equation*}
by the maximality.
Therefore,
	\begin{align*}
		[0, t' + T_*] \times N' \cap \varOmega
		&= ([0, t'] \times N' \cap \varOmega) \cup ([t', t' + T_*] \times N' \cap \varOmega) \\
		&\subset \dom(\varPhi).
	\end{align*}

\textbf{Step 4.}
For every $t \in [t', t' + T_*]$, we have
	\begin{equation*}
		\varPhi^t|_{N' \cap \varOmega}
		= \varPhi^{t - t'}|_{N* \cap \varOmega} \circ \varPhi^{t'}|_{N' \cap \varOmega}.
	\end{equation*}
Since the two maps in the right-hand side are continuously Fr\'{e}chet differentiable,
$\varPhi^t|_{N' \cap \varOmega}$ is also continuously Fr\'{e}chet differentiable.
Therefore,
	\begin{equation*}
		t_* < t_* + \frac{T_*}{2} < t' + T_* \in S_{x_0},
	\end{equation*}
which is a contradiction.
Thus, $t_* = T_\varPhi(x_0)$ follows.

By the above steps, the conclusion is obtained.
\end{proof}

\end{document}